\documentclass[final,onefignum,onetabnum]{siamart190516}
\usepackage[margin=1.33in]{geometry}

\usepackage[T1]{fontenc}
\usepackage[utf8]{inputenc}
\usepackage[shortlabels]{enumitem}
\usepackage{mathtools,amsfonts}
\usepackage{hyperref}
\usepackage[capitalize]{cleveref}
\usepackage{autonum}
\usepackage{xcolor}
\usepackage{subcaption}
\usepackage{graphicx}
\usepackage{url}
\usepackage[output-decimal-marker={.}]{siunitx}
\usepackage{tabularx,booktabs}
\usepackage{dsfont}
\usepackage[misc]{ifsym}
\usepackage{cite}

\newcommand{\scal}[2]{\left\langle{#1},{#2}  \right\rangle}
\newcommand{\norm}[1]{\left\lVert#1\right\rVert}
\newcommand{\abs}[1]{\left\lvert#1\right\rvert}

\newcommand{\BB}{\ensuremath{\mathcal B}}
\newcommand{\Ncone}{\ensuremath{\mathcal N}}
\newcommand{\Tcone}{\ensuremath{\mathcal T}}
\newcommand{\RR}{\ensuremath{\mathds R}}
\newcommand{\NN}{\ensuremath{\mathds N}}

\newcommand{\DDiag}{\ensuremath{\mathbf D}}
\newcommand{\XX}{\ensuremath{\mathbf X}}
\newcommand{\RP}{\ensuremath{\mathds R}_+}

\newcommand{\closu}{\ensuremath{\operatorname{cl}}}

\DeclareMathOperator{\epi}{epi}

\DeclareMathOperator{\conv}{conv}
\DeclareMathOperator{\cone}{cone}

\DeclareMathOperator{\Fix}{Fix}
\DeclareMathOperator{\Id}{Id}



\DeclareMathOperator{\dist}{dist}
\DeclareMathOperator{\bap}{bap}

\newcommand{\supH}{\mathds{H}}

\usepackage{calrsfs}
\DeclareMathAlphabet{\mathcal}{OMS}{zplm}{m}{n}



%
{\begin{list}{}{%
\settowidth{\labelwidth}{\textrm{#1~}}%
\setlength{\leftmargin}{\labelwidth+\labelsep}}}
{\end{list}}


\usepackage[shortlabels]{enumitem}
\newlist{lista}{enumerate}{1}
\setlist[lista]{label=\alph*., nosep,leftmargin=*,align=right}

\newlist{listi}{enumerate}{1}
\setlist[listi]{label={\upshape(\roman*\upshape)},leftmargin=*,align=right, widest=iii,nosep, format=\bf}

\allowdisplaybreaks


\newsiamremark{example}{Example}
\newsiamremark{remark}{Remark}
\crefname{hypothesis}{Hypothesis}{Hypotheses}
\newsiamthm{fact}{Fact}
\crefname{fact}{Fact}{Facts}

\headers{Finite convergence of alternating projections}{R. Behling, Y. Bello-Cruz  and L.-R. Santos}

\title{Infeasibility and error bound imply finite convergence of alternating projections\thanks{
\funding{ RB was partially supported by the \emph{Brazilian Agency  Conselho Nacional de Desenvolvimento Cient\'ifico e Tecnol\'ogico} (CNPq), Grants 304392/2018-9 and 429915/2018-7;  YBC was partially supported by the \emph{National Science Foundation} (NSF), Grant DMS — 1816449.}}}


\author{Roger Behling\thanks{School of Applied Mathematics, Funda\c{c}\~ao Getulio Vargas 
Rio de Janeiro, RJ — 22250-900, Brazil. (\email{rogerbehling@gmail.com})~\Letter}
\and 
Yunier Bello-Cruz\thanks{Department of Mathematical Sciences, Northern Illinois University.   DeKalb, IL — 60115-2828, USA. (\email{yunierbello@niu.edu})} 
\and 
Luiz-Rafael Santos\thanks{Department of Mathematics, Federal University of Santa Catarina. 
Blumenau, SC — 88040-900, Brazil. (\email{l.r.santos@ufsc.br})}
}

\begin{document}

\maketitle

\begin{abstract} 
This paper combines two ingredients in order to get a rather surprising result on one of the most studied, elegant and powerful tools for solving convex feasibility problems, the method of alternating projections (MAP). Going back to names such as Kaczmarz and von Neumann, MAP has the ability to track a pair of points realizing minimum distance between two given closed convex sets. Unfortunately, MAP may suffer from arbitrarily slow convergence, and sublinear rates are essentially only surpassed in the presence of some Lipschitzian error bound, which is our first ingredient.  The second one is a seemingly unfavorable and unexpected condition, namely, infeasibility. For two non-intersecting closed convex sets satisfying an error bound, we establish finite convergence of MAP. In particular, MAP converges in finitely many steps when applied to a polyhedron and a hyperplane in the case in which they have empty intersection. Moreover, the farther the target sets lie from each other, the fewer are the iterations needed by MAP for finding a best approximation pair. Insightful examples and further theoretical and algorithmic discussions accompany our results, including the investigation of finite termination of other projection methods. 

\begin{keywords}
  Convex feasibility problem, Infeasibility, Error bound, Finite convergence, Alternating projection.
\end{keywords}

\begin{AMS}
  47N10, 49M27, 65K05, 90C25
\end{AMS}

\end{abstract}

\section{Introduction}\label{sec:intro}  

The method of alternating projections (MAP) has a remarkable impact in so many areas of Mathematics and is one of the main classical tools for solving convex feasibility problems. A broad class of problems in Applied Mathematics is effectively solved by MAP~\cite{Deutsch:1992}. Definitely one of a kind, MAP is not only capable of tracking a point in the intersection of given closed convex sets $X,Y\subset \RR^{n}$, it delivers a replacement of an actual solution when $X$ and $Y$ do not intersect. Such a replacement comes in the form  of a pair $(\bar x, \bar y)\in X\times Y$, often called \emph{best approximation pair} to $X$ and $Y$, as it minimizes the Euclidean distance between these two sets. 

It is well-known that MAP converges globally whenever the distance between $X$ and $Y$ is attainable. It is also worth mentioning that, in view of Pierra's famous product space reformulation \cite{Pierra:1984}, feasibility problems involving a finite number of sets can be narrowed down to seeking a common point to two sets $X$ and $Y$.

The present work focuses on the inconsistent case $X\cap Y=\emptyset$ and reveals a surprising behavior of MAP in this setting. Roughly speaking, we come to the conclusion that infeasibility works in favor of MAP. Quite intuitive when looking at the scenarios displayed in \Cref{fig:MAPfigure1}, the fact that infeasibility has a strong positive impact on MAP has apparently not been seen anywhere in its extensive literature. Actually, we prove that infeasibility added by an error bound condition provides finite convergence of the very pure MAP. More precisely, by assuming $X\cap Y=\emptyset$ and a suitable error bound condition, we prove finite convergence of the MAP sequence defined by $x^{k+1}\coloneqq P_XP_Y(x^k)$ starting at any point $x^0\in \RR^{n}$. Finite convergence means that for some non-negative integer $\bar k$, MAP reaches a best point $\bar x\in X$, that is, $x^{\bar k}=\bar x$ and $\bar y = P_Y(\bar x)$ form a best approximation pair to $X$ and $Y$. Here and throughout the text, $P_X$ and $P_Y$ stand for the orthogonal projections onto $X$ and $Y$, respectively.

Let us now look at a collection of illustrations that serves as a summary of our results. 
\begin{figure*}[!htp]\centering
  \begin{subfigure}[b]{.48\linewidth}
    \centering
    \includegraphics[width=.9\textwidth]{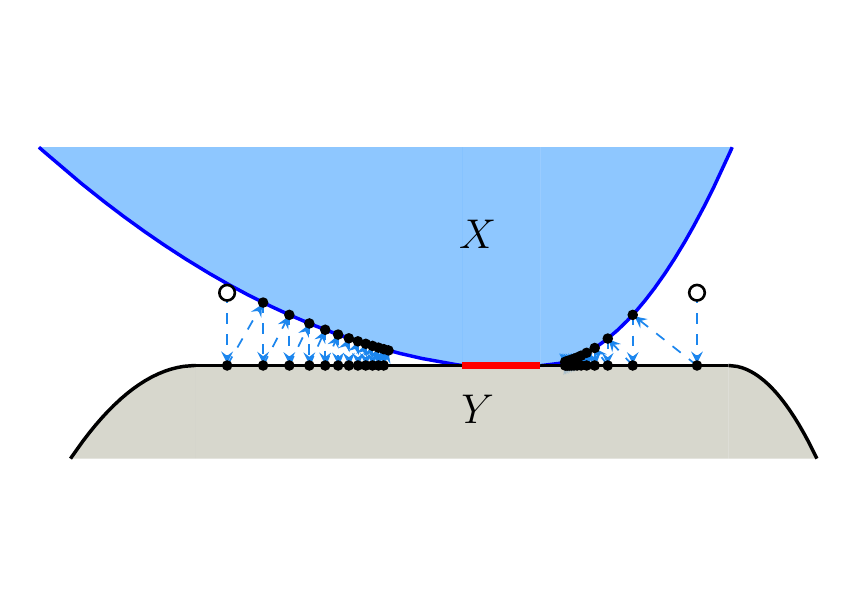}
    \caption{\label[figure]{fig:MAPfigure1a} Consistency; linear convergence on the left.}
  \end{subfigure}
  \begin{subfigure}[b]{.48\linewidth}
    \centering
    \includegraphics[width=.9\textwidth]{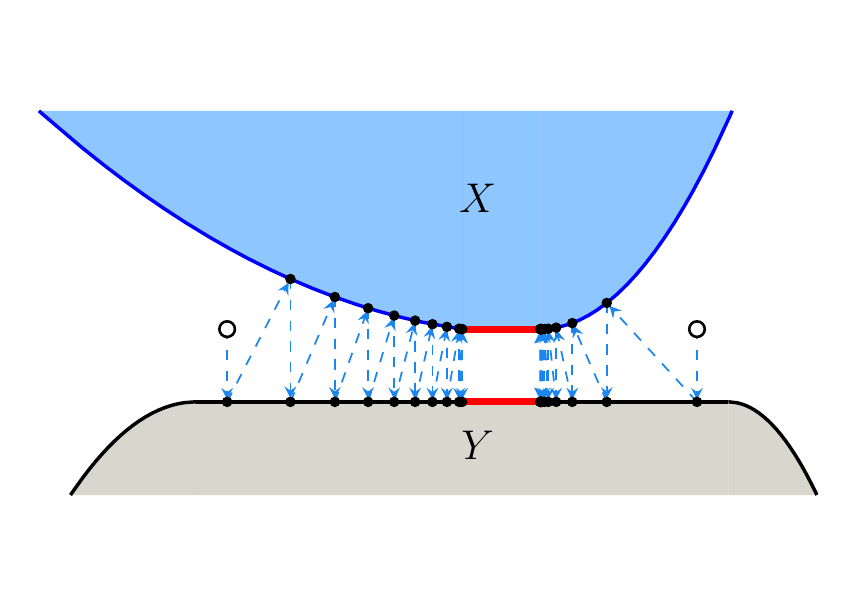}
    \caption{\label[figure]{fig:MAPfigure1b} Inconsistency; $k=10$ on the left.}
  \end{subfigure} \\
  \begin{subfigure}[t]{.48\linewidth}
    \centering
    \includegraphics[width=.9\textwidth]{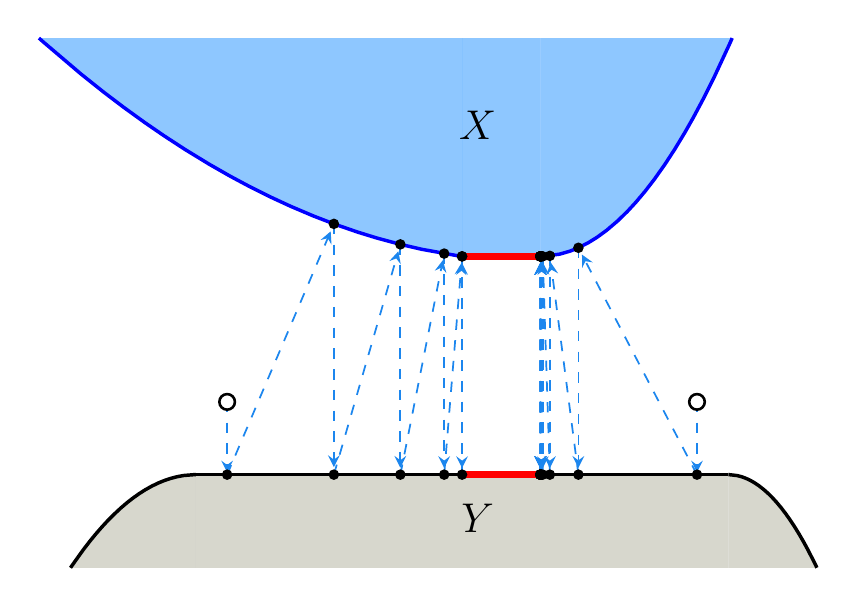}
    \caption{Inconsistency; increasing the distance between $X$ and $Y$; $k=5$ on the left.\label[figure]{fig:MAPfigure1c}}
  \end{subfigure}
  \begin{subfigure}[t]{.48\linewidth}
    \centering
    \includegraphics[width=.9\textwidth]{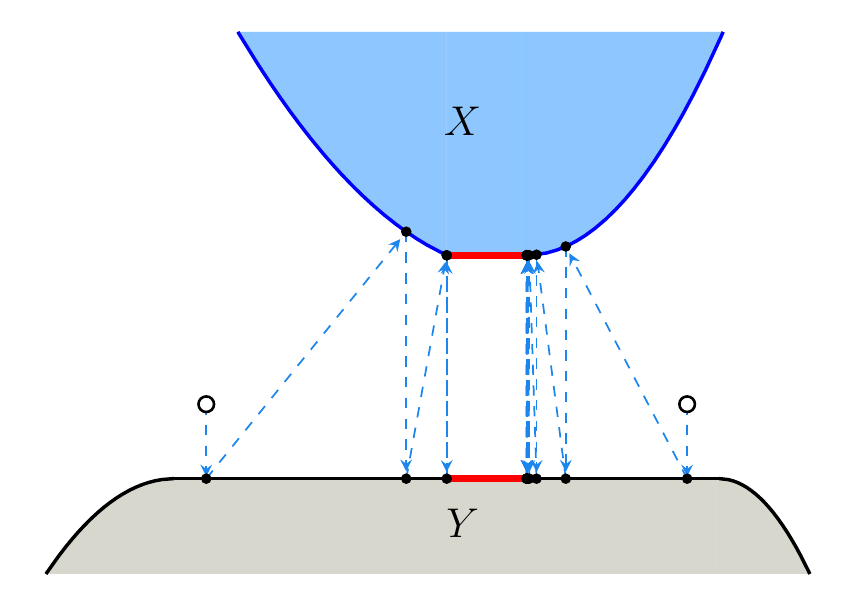}
    \caption{Inconsistency; improving the error bound  on the left; $k=3$  on the left.\label[figure]{fig:MAPfigure1d}}
  \end{subfigure}
  \caption{Error bound on the left-hand sides.}
   \label{fig:MAPfigure1}
\end{figure*}

\Cref{fig:MAPfigure1} displays four scenarios of MAP acting on sets $X$ and $Y$. We start with a consistent problem in  \cref{fig:MAPfigure1a} and analyze two MAP sequences. The one starting from the left converges linearly to a point in $X\cap Y$. This is due to the fact that in this region $X$ and $Y$ form a non-zero  angle. In other words, a Lipschitzian error bound holds. This is not the case on the right-hand side of the picture and therefore, MAP only achieves sublinear convergence over there. By lifting $X$, we generate inconsistent intersection problems in \labelcref{fig:MAPfigure1b,fig:MAPfigure1c,fig:MAPfigure1d}. The result is that MAP responds favorably speed-wise to this translation of $X$. More than that, MAP improves when increasing infeasibility and also when the error bound gets better, that is, the left border of $X$ gets steeper. This can be noticed looking at \Cref{fig:MAPfigure1} as a film from \cref{fig:MAPfigure1a,fig:MAPfigure1b,fig:MAPfigure1c,fig:MAPfigure1d}. Lifting $X$ from \cref{fig:MAPfigure1a} to \cref{fig:MAPfigure1b} makes MAP's convergence jump from linear  to finite on the left. On the right-hand side, MAP leaps its convergence rate from sublinear to linear. After a further lift of $X$ from \cref{fig:MAPfigure1b} to \cref{fig:MAPfigure1c}, MAP reaches a best point in $5$ iterations instead of $10$ on the left-hand side. MAP only needs $3$ iterates to get a best approximation pair when improving the error bound in the left part of \cref{fig:MAPfigure1d}.

The message of \Cref{fig:MAPfigure1} is fairly clear. As for the aforementioned error bound condition, it will be formally introduced later. We anticipate, though, that it regards the target sets and their relation with what we are going to call optimal supporting hyperplanes. We point out that such an error bound is automatically globally fulfilled if $X$ is a polyhedron and $Y$ a hyperplane, providing finite convergence of MAP if, in addition, these particular polyhedral sets have empty intersection. Note that finite convergence of MAP may happen even under a non-polyhedral structure, as depicted in \Cref{fig:MAPfigure1}.

Before outlining the structure of our paper, we briefly go through some turning points in the history of MAP. This method has fascinated scientists for nearly a century now, and although many results on this simple tool have been derived, open questions remain. In the early 1930s, von Neumann firstly studied MAP for subspaces. Yet, his results were only published in 1950~\cite{Neumann:1950}, proving the convergence of MAP to a so-called best approximation solution for the intersection of two subspaces. In 1937, Kaczmarz~\cite{Kaczmarz:1937} proposed a MAP related algorithm (also known as Cyclic projections) for finding best approximation solutions of linear systems. In 1959, MAP was studied for the convex case by Cheney and Goldstein~\cite{Cheney:1959}, covering also inconsistent feasibility problems. Precisely the theme of our paper, projection methods for inconsistent inclusions have a history on their own; see the 2018 review by Censor and Zaknoon~\cite{Censor:2018}. The ability of MAP to find best approximation pairs is a notable characteristic, making MAP (and its variants) one of the most used algorithms in Optimization~\cite{ Deutsch:1995, Deutsch:1997,Coakley:2011, Bauschke:1995, Censor:1991, Diaconis:2010,Bauschke:2016a,Smith:1977,Behling:2020b,DePierro:1988,Aharoni:2018}. 
Although MAP always
converges under the existence of best approximation pairs, the convergence rate may be arbitrarily slow~\cite{Bauschke:2009, Badea:2012, Franchetti:1986}. In 1950, Aronszajn~\cite{Aronszajn:1950}  found the lower bound for the linear rate of MAP, given by the square of the cosine of the minimal angle (Friedrichs angle) between two subspaces, which turns out to be the sharpest one, as proved by Kayalar and Weinert~\cite{Kayalar:1988} in 1988. 
For an in depth related literature on MAP; see, for instance, Bauschke and Borwein~\cite{Bauschke:1993,Bauschke:1996}, and Deutsch~\cite{Deutsch:1992}. 

The paper is organized as follows. In \Cref{sec:facts} we collect known facts on MAP and some auxiliary material.  \Cref{sec:error-bound} contains our mayor results concerning the geometry of two disjoint closed convex sets in the presence of an error bound condition. This analysis allows us to derive in \Cref{sec:finitemethods} our main result, namely,  finite convergence of MAP under inconsistency and what we call BAP error bound. The BAP error bound is automatically fulfilled for a polyhedron and a hyperplane, and we study its connection with linear regularity and intrinsic transversality. Also in \Cref{sec:finitemethods}, we will see that for a MAP sequence to converge in a finite number of steps, it is necessary and sufficient that it satisfies the BAP error bound at its iterates. \Cref{sec:finitemethods} ends by investigating finite termination under inconsistency of Cyclic projections, Cimmino and Douglas-Rachford methods.  In \Cref{sec:discussion}, we present insightful examples and applications. In particular, we connect our results to Linear Programming and convex min-max problems. We consider as well a simple problem giving rise to the question on whether a H\"older type error bound could make MAP's rate of convergence go from sublinear to linear when shifting the target sets apart. Some concluding remarks are presented in~\Cref{sec:concluding}.


\section{Background material}\label{sec:facts}

Let  $X,Y\subset \RR^{n}$ be closed, convex and nonempty. We recall that the orthogonal projection of $x\in \RR^n$ onto $X$ is given  by  $P_{X}(x)\in X$ if, and only if, $\scal{y-P_{X}(x)}{x - P_{X}(x)}\le 0,$ for all $y\in X$. Throughout the text, $\scal{\cdot}{\cdot}$ stands for the Euclidean inner product inducing the norm $\norm{\cdot}\coloneqq\sqrt{\scal{\cdot}{\cdot}}$. The non-negative integer numbers will be denoted by $\NN$. The open ball centered in $x$ with radius $\delta >0$ is the set $\BB_{\delta}(x)\coloneqq \{z\in\RR^{n}\mid \norm{x - z} <\delta\}$.
 
We define the \emph{distance} between $X$ and $Y$ by
$\label{eq:dist}
\dist(X,Y) \coloneqq \inf \{\norm{x-y}\mid x\in X, y\in Y\}.$
When one of the sets is a singleton, for instance $X = \{x\}$, we use the notation $\dist(x,Y)$.   A \emph{best approximation pair} (BAP)  relative to $X$ and $Y$ is a  pair $(\bar x,\bar y)\in X\times Y$ attaining  the distance between $X$ and $Y$, that is,  $\dist(\bar x,\bar y) = \dist(X,Y)$.  The set of all BAP relative to $X$ and $Y$ is denoted by $\bap(X,Y)\subset X\times Y $ and, accordingly, we define the  sets \[\bap_{Y}(X) \coloneqq\{x\in X\mid (x,y)\in \bap(X,Y)\}=\{x\in X\mid \dist(x,Y)=\dist(X,Y)\}\] and \[\bap_{X}(Y) \coloneqq\{y\in Y\mid (x,y)\in \bap(X,Y)\}=\{y\in Y\mid \dist(y, X)=\dist(X,Y)\}.\] Note that $\bap_{Y}(X)$ (respectively $\bap_{X}(Y)$) is the subset of points in $X$  (respectively $Y)$ \emph{nearest} to $Y$ (respectively $X$). 
In the consistent case, that is,  when $X\cap Y$ is nonempty, we have $\bap_{X}(Y) = \bap_{Y}(X) = X\cap Y$. As we are interested in the inconsistent case, henceforth, we suppose that $X\cap Y = \emptyset$.  In this context we define the \emph{ displacement vector} as $d\coloneqq P_{\closu(X-Y)}(0).$ So, $\norm{d}=\dist(X,Y)$ and $\dist(X,Y)$ is attained if, and only, if $d\in X-Y$. In particular, $\dist(X,Y)$ is attained whenever $X-Y$ is closed.

Given any point $x\in \RR^{n}$, define the terms of the sequence $(x^{k})_{k\in\NN}$ by
\[\label{eq:MAPsequence}
x^0 = x,\quad x^{k+1} = P_XP_Y(x^{k}),
\]
for every $k\in\NN$. The sequence $(x^{k})_{k\in\NN}$ is the \emph{alternating projection sequence} starting at $x^{0}=x$. Cheney and Goldstein established in~\cite{Cheney:1959}  that if one of the sets is compact or if one of the sets is finitely generated, the  fixed point set of the operator $P_XP_Y$ is nonempty and the sequence \cref{eq:MAPsequence} converges to a fixed point of this operator. The general result was summarized and enlarged in  \cite{Bauschke:1994} as follows. 
\begin{fact}[BAP sets {\cite[Lemma 2.2]{Bauschke:1994}}]\label{fact:bapsets}
Denote $\Fix(P_XP_Y)=\{x\in \RR^n \mid P_XP_Y(x)=x\}$. Then,
\begin{listi}
  \item $\bap_{Y}(X) = \Fix(P_XP_Y)$.
  \item  $\bap_{Y}(X)$ and $\bap_{X}(Y)$ are closed convex sets.
  \item  If $\bap_{Y}(X)$ or $\bap_{X}(Y)$ is nonempty then $\dist(X,Y)$ is attained. Moreover, let $d$ be the displacement vector. Then
\[P_Y(\bar x)=\bar x-d, \quad \forall \bar x\in \bap_{Y}(X),\]
  and 
 $\bap_{Y}(X)-d=\bap_{X}(Y), \bap_{Y}(X)=X\cap (Y+d), \bap_{X}(Y)=(X-d)\cap Y.$ 
\end{listi}
\end{fact}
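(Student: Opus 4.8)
The plan is to prove the three items in the order stated, concentrating the real work in the nontrivial inclusion of item~(i); items~(ii) and~(iii) then follow by soft arguments combined with the variational characterization of projections already recalled in the text.

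For item~(i), I would first record that any $\bar x\in\Fix(P_XP_Y)$ lies in $X$ (being in the range of $P_X$) and any $\bar x\in\bap_{Y}(X)$ lies in $X$ by definition, so both sides consist of points of $X$; throughout set $\bar y\coloneqq P_Y(\bar x)$. The inclusion $\bap_{Y}(X)\subseteq\Fix(P_XP_Y)$ is immediate: if $(\bar x,\bar y)$ attains $\dist(X,Y)$, then $\norm{x'-\bar y}\ge\dist(X,Y)=\norm{\bar x-\bar y}$ for every $x'\in X$ because $\bar y\in Y$, whence $\bar x=P_X(\bar y)=P_XP_Y(\bar x)$. The reverse inclusion is the delicate one. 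Given a fixed point $\bar x$ with $\bar y=P_Y(\bar x)$ and $\bar x=P_X(\bar y)$, I would write the two defining variational inequalities $\scal{y-\bar y}{\bar x-\bar y}\le 0$ for all $y\in Y$ and $\scal{x-\bar x}{\bar y-\bar x}\le 0$ for all $x\in X$, and set $v\coloneqq\bar x-\bar y$. Combining these with the relation $\bar x=\bar y+v$ yields $\scal{x-y}{v}\ge\norm{v}^2$ for every $x\in X$ and $y\in Y$; by Cauchy--Schwarz this forces $\norm{x-y}\ge\norm{v}$ for all such pairs, so $\dist(X,Y)=\norm{v}=\norm{\bar x-\bar y}=\dist(\bar x,Y)$ and hence $\bar x\in\bap_{Y}(X)$. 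I expect this separation-of-the-two-sets-by-parallel-hyperplanes step to be the main obstacle, since it is precisely where the geometry of disjoint convex sets enters; once it is in place, everything else is bookkeeping.

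For item~(ii), I would observe that $x\mapsto\dist(x,Y)$ is a continuous convex function, so $\bap_{Y}(X)=\{x\in X\mid\dist(x,Y)\le\dist(X,Y)\}$ is the intersection of the closed convex set $X$ with a closed convex sublevel set, hence closed and convex; the same argument applies to $\bap_{X}(Y)$. (Alternatively, one could invoke closedness and convexity of the fixed-point set of the nonexpansive operator $P_XP_Y$ via item~(i).)

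For item~(iii), nonemptiness of either set already delivers a pair attaining the distance, by the computation in item~(i). To identify $v=\bar x-\bar y$ with the displacement vector $d=P_{\closu(X-Y)}(0)$, I would note that $v\in X-Y\subseteq\closu(X-Y)$ and $\norm{v}=\dist(X,Y)=\dist(0,\closu(X-Y))$, so $v$ is a nearest point of the closed convex set $\closu(X-Y)$ to the origin; uniqueness of that projection gives $v=d$, i.e. $P_Y(\bar x)=\bar x-d$. The three set identities are then routine: $\bap_{Y}(X)\subseteq X\cap(Y+d)$ because $\bar x=\bar y+d$, and conversely any $x\in X\cap(Y+d)$ satisfies $\dist(x,Y)\le\norm{d}=\dist(X,Y)$ and hence lies in $\bap_{Y}(X)$; the identity $\bap_{X}(Y)=(X-d)\cap Y$ follows by the symmetric argument (the displacement vector of the pair $(Y,X)$ being $-d$), and $\bap_{Y}(X)-d=\bap_{X}(Y)$ then drops out by translating the two descriptions by $-d$.
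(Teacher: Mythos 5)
The paper does not prove this statement; it is quoted as a Fact with a citation to Bauschke--Borwein (1994), so there is no in-paper argument to compare against. Your proof is correct and self-contained: the two variational inequalities at a fixed point give $\scal{x-y}{v}\ge\norm{v}^2$ for all $x\in X$, $y\in Y$, which via Cauchy--Schwarz yields $\dist(X,Y)=\norm{v}$ and settles the hard inclusion of (i); the sublevel-set argument for (ii) and the identification $v=P_{\closu(X-Y)}(0)$ by uniqueness of the projection in (iii), followed by the routine set identities, are all sound. This matches the standard argument in the cited reference.
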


In the next fact, we abuse notation and use $\scal{X}{y}\le 0$ to denote that $\scal{x}{y}\le 0,\forall x\in X$. 

\begin{fact}[BAP pairs]\label{fact:dist-attained}
Let $x\in\RR^{n}$ be given. Then, if $\dist(X,Y)$ is attained, with  $d$ being the displacement vector, then$$P_{\bap_{X}(Y)}(x)=P_{\bap_{Y}(X)}(x)-d,$$ and $\langle X-\bap_{Y}(X),d\rangle\ge 0$ and $\langle Y-\bap_{X}(Y), d\rangle \le 0$.
\end{fact}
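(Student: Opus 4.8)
The plan is to reduce everything to the variational characterization of the orthogonal projection recorded at the start of \Cref{sec:facts}, fed by three outputs of \Cref{fact:bapsets}: the translation identity $\bap_{X}(Y)=\bap_{Y}(X)-d$, the relation $P_Y(\bar x)=\bar x-d$ for every $\bar x\in\bap_{Y}(X)$, and $\bap_{Y}(X)=\Fix(P_XP_Y)$. With these in hand the two inequalities are immediate and the projection identity follows after one extra observation.

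First I would dispatch the two inequalities. Fix $\bar x\in\bap_{Y}(X)$; since $\bar x$ is a fixed point of $P_XP_Y$ and $P_Y(\bar x)=\bar x-d$, we have $\bar x=P_X(\bar x-d)$. Substituting $\bar x-d$ into the projection inequality for $X$ gives $\scal{x'-\bar x}{(\bar x-d)-\bar x}\le 0$ for every $x'\in X$, that is, $\scal{x'-\bar x}{d}\ge 0$, which is exactly $\scal{X-\bap_{Y}(X)}{d}\ge 0$. Symmetrically, fix $\bar y\in\bap_{X}(Y)$ and write $\bar y=\bar x-d$ with $\bar x\in\bap_{Y}(X)$, so that $\bar y=P_Y(\bar x)$. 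The projection inequality for $Y$ then reads $\scal{y'-\bar y}{\bar x-\bar y}\le 0$ for every $y'\in Y$, and since $\bar x-\bar y=d$ this is $\scal{Y-\bap_{X}(Y)}{d}\le 0$.

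Next I would isolate the orthogonality relation that powers the projection identity. Specializing the first inequality to $x'=\bar x'\in\bap_{Y}(X)\subset X$ and swapping the roles of $\bar x,\bar x'$ yields $\scal{\bar x'-\bar x}{d}=0$, so $d$ is orthogonal to every difference of points of $\bap_{Y}(X)$; equivalently, this drops out of the projection inequality for $P_Y(\bar x)=\bar x-d$ tested at the feasible point $\bar x'-d\in Y$. To then prove $P_{\bap_{X}(Y)}(x)=P_{\bap_{Y}(X)}(x)-d$, set $\bar x\coloneqq P_{\bap_{Y}(X)}(x)$; the point $\bar x-d$ lies in $\bap_{X}(Y)=\bap_{Y}(X)-d$, and for an arbitrary $b=a-d\in\bap_{X}(Y)$ with $a\in\bap_{Y}(X)$ one expands $\scal{b-(\bar x-d)}{x-(\bar x-d)}=\scal{a-\bar x}{x-\bar x}+\scal{a-\bar x}{d}$. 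The first summand is $\le 0$ because $\bar x$ projects $x$ onto $\bap_{Y}(X)$, and the second vanishes by the orthogonality relation since $a-\bar x\in\bap_{Y}(X)-\bap_{Y}(X)$. Hence $\bar x-d$ satisfies the defining inequality for the projection onto $\bap_{X}(Y)$, which proves the identity.

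The only genuinely delicate point is this last identity. The naive translation rule for projections onto convex sets reads $P_{C-d}(x)=P_C(x+d)-d$, which is off by the shift $d$ from what is claimed; the orthogonality $d\perp(\bap_{Y}(X)-\bap_{Y}(X))$ is exactly what collapses $P_{\bap_{Y}(X)}(x+d)$ back to $P_{\bap_{Y}(X)}(x)$. I therefore expect the main obstacle to be recognizing that the BAP set lies in an affine flat orthogonal to $d$ and proving that orthogonality before attempting the projection computation; once it is in place, the rest is a one-line expansion.
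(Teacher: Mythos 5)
Your proof is correct. Note that the paper itself offers no proof of this statement: it is presented as a background ``Fact'' without citation, so there is nothing to compare against line by line. Your derivation is exactly the natural one: both inequalities drop out of the variational characterization of $P_X$ at $\bar x=P_X(\bar x-d)$ and of $P_Y$ at $\bar y=P_Y(\bar x)$, using \Cref{fact:bapsets}; and you correctly identify the one nontrivial ingredient in the projection identity, namely the orthogonality $\scal{\bar x'-\bar x}{d}=0$ for $\bar x,\bar x'\in\bap_Y(X)$ (which the paper records separately, right after \Cref{def:optimalhyperplane}, to justify that $\supH_Y(X)$ is well defined), without which the naive translation rule $P_{C-d}(x)=P_C(x+d)-d$ would leave an unwanted shift in the argument of the projection. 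The only implicit prerequisites --- that $\bap_Y(X)$ and $\bap_X(Y)$ are nonempty, closed and convex so that the projections exist and the characterization applies --- are supplied by attainment of the distance together with \Cref{fact:bapsets}(ii), so the argument is complete.
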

\begin{fact}[Convergence of MAP {\cite[Theorem 4.8]{Bauschke:1994}}]\label{conv-MAP} Let $(x^k)_{k\in\NN}$ be an alternating projection sequence as given in \cref{eq:MAPsequence}. Then,  \[x^k-P_Y(x^k)\to d,\] where $d$ is the displacement vector.  Moreover, 
\begin{listi}
\item if $\dist(X, Y)$ is attained then   $x^k\to \bar x\in \bap_{Y}(X)$ and $P_Y(x^k)\to \bar y\coloneqq \bar x-d\in \bap_{X}(Y)$;
\item  if $\dist(X, Y)$ is not attained then $\norm{x^k} \to +\infty$.
\end{listi}
\end{fact}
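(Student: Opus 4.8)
The plan is to regard $T\coloneqq P_XP_Y$ as a composition of the firmly nonexpansive projections $P_X,P_Y$ and to combine the (strong) nonexpansiveness of $T$ with the variational characterization of the displacement vector $d=P_{\closu(X-Y)}(0)$ as the unique minimal-norm element of the convex set $\closu(X-Y)$. Writing $y^k\coloneqq P_Y(x^k)$, so that $x^{k+1}=P_X(y^k)$, I would first record an elementary monotonicity fact: since $x^k\in X$ and $y^k\in Y$ are feasible competitors, $\norm{x^{k+1}-y^k}=\dist(y^k,X)\le\norm{x^k-y^k}$ and $\norm{y^{k+1}-x^{k+1}}=\dist(x^{k+1},Y)\le\norm{y^k-x^{k+1}}$; hence the interleaved gaps, and in particular $\norm{x^k-P_Y(x^k)}$, are nonincreasing and converge to some $\mu\ge\dist(X,Y)=\norm{d}$.

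Next I would prove asymptotic regularity, $x^{k+1}-x^k\to0$. Because $T$ is a composition of projections onto convex sets it is \emph{strongly nonexpansive} in the sense of Bruck--Reich, so the displacements $x^k-Tx^k$ converge to the minimal-norm element of $\clran(\Id-T)$; it remains to see that this element is $0$. Choosing $x_n\in X$ with $\dist(x_n,Y)\to\dist(X,Y)$ and putting $w_n\coloneqq P_Y(x_n)$, both $x_n$ and $P_X(w_n)$ are near-minimizers of $\norm{\cdot-w_n}$ over $X$, and the elementary estimate $\norm{x_n-P_X(w_n)}^2\le\norm{x_n-w_n}^2-\norm{P_X(w_n)-w_n}^2$ forces $\norm{x_n-Tx_n}=\norm{x_n-P_X(w_n)}\to0$. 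Thus $0\in\clran(\Id-T)$ and $x^{k+1}-x^k\to0$ (and similarly $y^{k+1}-y^k\to0$).

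With these tools in hand I would split into the two regimes. If $\dist(X,Y)$ is attained, then by \cref{fact:bapsets} the set $\bap_Y(X)=\Fix(T)$ is nonempty, the iteration \cref{eq:MAPsequence} is Fej\'er monotone with respect to it, and asymptotic regularity together with demiclosedness of $\Id-T$ (automatic in $\RR^n$) yields $x^k\to\bar x\in\bap_Y(X)$; continuity of $P_Y$ and \cref{fact:bapsets} then give $P_Y(x^k)\to\bar x-d=\bar y\in\bap_X(Y)$, and in particular $x^k-P_Y(x^k)\to d$. If $\dist(X,Y)$ is not attained, a bounded subsequence $x^{k_j}\to x^*$ would, by continuity and $x^{k_j+1}-x^{k_j}\to0$, satisfy $Tx^*=x^*$, i.e. $x^*\in\Fix(T)=\bap_Y(X)$, whence \cref{fact:bapsets} would force the distance to be attained; so no bounded subsequence exists and $\norm{x^k}\to+\infty$.

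The remaining, and I expect hardest, point is to prove $x^k-P_Y(x^k)\to d$ when the distance is \emph{not} attained, since then no limit point is available and the iterates $x^k,y^k$ escape to infinity. Here I would argue that the bounded vectors $p^k\coloneqq x^k-y^k\in X-Y$ have all their cluster points in $\closu(X-Y)$ with norm $\mu$, and use the projection inequalities $\scal{x-x^{k+1}}{x^{k+1}-y^k}\ge0$ $(x\in X)$ and $\scal{y-y^k}{x^k-y^k}\le0$ $(y\in Y)$, together with the bounded-gap reduction $p^k-(x^{k+1}-y^k)=x^k-x^{k+1}\to0$, to show that every cluster point $q^*$ satisfies $\scal{x-y-q^*}{q^*}\ge0$ for all $x\in X$, $y\in Y$; by density this is exactly the variational inequality characterizing $q^*=P_{\closu(X-Y)}(0)=d$, so $p^k\to d$ and $\mu=\norm{d}$. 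The delicate part is precisely the control of the residual inner products built from the unbounded iterates: asymptotic regularity keeps the relevant differences bounded, but making the leftover cross terms vanish in the limit is where the real work lies.
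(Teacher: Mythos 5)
The paper itself offers no proof of this statement: it is imported verbatim as a Fact from \cite[Theorem 4.8]{Bauschke:1994}, so there is no in-paper argument to compare against. Judged on its own merits, your outline is structurally sound and most pieces are complete: the interleaved monotonicity of the gaps, the verification that $0\in\clran(\Id-P_XP_Y)$ via near-minimizers (hence asymptotic regularity, granting the Bruck--Reich theorem for strongly nonexpansive maps), the Fej\'er-monotonicity argument in case (i), and the contradiction argument in case (ii) are all correct.

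The genuine gap is exactly where you place it, and nothing you have written closes it: in the non-attained case the cross term $\scal{x-y^{k}}{x^{k}-x^{k+1}}$ pairs a factor that may be unbounded against one that merely tends to $0$, and asymptotic regularity alone gives only $\norm{y^k}=o(k)$ and $\norm{x^{k}-x^{k+1}}=o(1)$, whose product need not vanish. The missing ingredient is quantitative: the Pythagorean inequalities $\norm{x^{k+1}-y^k}^2\le\norm{x^k-y^k}^2-\norm{x^k-x^{k+1}}^2$ and the companion one for $P_Y$ telescope to $\sum_k\left(\norm{x^{k+1}-x^k}^2+\norm{y^{k+1}-y^k}^2\right)\le\norm{x^0-y^0}^2-\mu^2<\infty$. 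This square-summability gives both $\norm{y^k}=O(\sqrt{k})$ and a subsequence along which $\sqrt{k}\,\norm{x^{k}-x^{k+1}}\to0$, so the offending cross terms vanish along that subsequence; the corresponding cluster point $q^{*}$ of $x^k-y^k$ then satisfies your variational inequality, whence $q^{*}=d$ and $\mu=\norm{d}$. At that point you should not chase the remaining cluster points via variational inequalities: every cluster point of $x^k-y^k$ lies in $\closu(X-Y)$ and has norm $\mu=\norm{d}$, so uniqueness of the minimal-norm element of a closed convex set already forces $x^k-P_Y(x^k)\to d$ (equivalently, the parallelogram law shows $(x^k-y^k)$ is Cauchy once $\mu=\norm{d}$). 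Without the square-summability step, or an equivalent device proving directly that $\dist(x^k,Y)\to\dist(X,Y)$, the central claim of the theorem remains unproved.
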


Next we present some definitions and well-known results concerning polyhedra, useful in \Cref{thm:SingleStepMAP-PolyHyp,thm:FiniteMAP-poly}.
\begin{definition}[Polyhedron]
A set $\Omega\subset \RR^{n}$ is said to be a  \emph{polyhedron}, if it can be expressed as the intersection of a finite family of closed half-spaces, that is, 
\[\label{eq:polyhedron}
  \Omega \coloneqq \{x \in \RR^{n} \mid \scal{a_i}{x} \leq \alpha_i,  \text{ for } i=1,\ldots, m\},
\]
where $ a_{i}\in\RR^{n}, \alpha_{i}\in\RR$.
\end{definition}

Note that a polyhedron, also referred to as a \emph{polyhedral set}, is always convex.

\begin{definition}[Finitely generated cone and conic base]\label{def:cones} A set $K\subset\RR^{n}$ is  a \emph{cone} if it is closed under positive scalar multiplication.  A cone $K$ is  \emph{finitely generated} if there exists a finite set $S\subset \RR^{n}$ such that $\cone(S) = K$, where $\cone(S)$, is the set of all conic combinations of elements of $S$. A \emph{conic base} of a finitely generated cone $K$ is a finite set $B_{K}\subset\RR^{n}$ with minimal cardinality such that $\cone(B_{K}) = K$.

\end{definition}

 \begin{fact}[Polyhedron is finitely generated {\cite[Proposition B.17]{Bertsekas:1999}}]\label{fact:finite-generators}
A set $\Omega\subset \RR^{n}$ is polyhedral if, and only if, it is finitely generated, \emph{i.e.},  there exist a nonempty and finite set of vectors $\left\{v_{1}, \ldots, v_{m}\right\},$ and a finitely generated cone $K$ such that
\[
\Omega=\{x \in \RR^{n} \mid x=y+\sum_{j=1}^{m} \mu_{j} v_{j}, y \in K, \sum_{j=1}^{m} \mu_{j}=1, \mu_{j} \geq 0, j=1, \ldots, m\}.
\]
\end{fact}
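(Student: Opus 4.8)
The plan is to prove the stated equivalence by reducing it to the \emph{homogeneous} (conic) Minkowski--Weyl theorem and then recovering the affine statement by a homogenization/slicing argument. Throughout I assume $\Omega\neq\emptyset$, which is implicit in the claimed representation since the constraints $\sum_j\mu_j=1$ and $\mu_j\ge 0$ force the convex-combination part to be nonempty. The first observation is that the asserted representation is exactly $\Omega=\conv\{v_1,\dots,v_m\}+K$ with $K$ a finitely generated cone, so the target is the classical decomposition of a polyhedron into a bounded (polytope) part plus a recession cone.

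First I would lift to $\RR^{n+1}$. Writing $\Omega=\{x\in\RR^{n}\mid \scal{a_i}{x}\le\alpha_i,\ i=1,\dots,m\}$, I associate with it the cone
\[
C\coloneqq\{(x,t)\in\RR^{n+1}\mid \scal{a_i}{x}-\alpha_i t\le 0,\ i=1,\dots,m,\ -t\le 0\},
\]
which is cut out by finitely many homogeneous inequalities and satisfies $\Omega=\{x\mid (x,1)\in C\}$. The affine statement then follows once $C$ is shown to be finitely generated: splitting a conic generating set of $C$ into the vectors with positive last coordinate (rescaled to $t=1$, yielding the $v_j$) and those with $t=0$ (yielding generators of $K$), and intersecting with the hyperplane $t=1$, reproduces $\Omega=\conv\{v_j\}+K$. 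The converse is symmetric: if $\Omega=\conv\{v_j\}+\cone\{w_i\}$, then $C=\cone(\{(v_j,1)\}\cup\{(w_i,0)\})$ is finitely generated, and I only need that a finitely generated cone is polyhedral to conclude that the slice $\Omega$ is polyhedral.

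Thus everything rests on the conic Minkowski--Weyl theorem: a cone is cut out by finitely many homogeneous inequalities if, and only if, it is finitely generated. The ``finitely generated $\Rightarrow$ polyhedral'' (Weyl) direction I would obtain from Fourier--Motzkin elimination: if $C=\{G\mu\mid \mu\ge 0\}$, then $C$ is the coordinate projection onto the $x$-variables of the polyhedron $\{(x,\mu)\mid x=G\mu,\ \mu\ge 0\}$, and since eliminating one variable from a finite inequality system again yields a finite system, projections of polyhedra are polyhedra. For the reverse (Minkowski) direction I would use polarity: if $C=\{x\mid \scal{a_i}{x}\le 0\}$, then Farkas' lemma gives $C^{\circ}=\cone\{a_1,\dots,a_m\}$, so $C^{\circ}$ is finitely generated; applying the Weyl direction to $C^{\circ}$ shows $C^{\circ}$ is polyhedral, whence Farkas again makes $C^{\circ\circ}$ finitely generated, and the bipolar identity $C^{\circ\circ}=C$ for the closed convex cone $C$ closes the loop.

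The delicate point, and the main obstacle, is precisely this Minkowski direction: guaranteeing that finitely many facet inequalities produce only \emph{finitely many} generating rays. The polarity route above outsources that finiteness to Farkas' lemma and the bipolar theorem; a fully self-contained alternative would instead enumerate extreme rays as the solutions of $n-1$ linearly independent active constraints, and the bookkeeping needed to verify that this list is finite and actually generates $C$ is where the real care is required. Once the conic theorem is established, the homogenization and slicing steps are routine.
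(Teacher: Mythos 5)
The paper does not prove this statement at all: it is recorded as a \emph{Fact} and outsourced to Bertsekas \cite[Proposition B.17]{Bertsekas:1999}, with the conic Minkowski--Weyl theorem separately cited from Rockafellar--Wets immediately afterwards, so there is no in-paper argument to compare against. Your proof is correct and complete as a self-contained substitute: the identification of the claimed representation with $\Omega=\conv\{v_1,\dots,v_m\}+K$, the homogenization to the cone $C\subset\RR^{n+1}$ and the slicing at $t=1$ (in both directions), the Weyl direction via Fourier--Motzkin, and the Minkowski direction via Farkas, polarity, and the bipolar identity are all standard and fit together without circularity, since you establish Weyl first and only then invoke it (and the resulting closedness of finitely generated cones) inside the polarity argument. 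You are also right to flag the two genuinely delicate points: the nonemptiness assumption (the empty polyhedron admits no representation with a nonempty $\{v_j\}$, so the statement implicitly excludes it) and the fact that the finiteness of generators in the Minkowski direction is exactly where Farkas or an extreme-ray enumeration must carry the load.
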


Let us note that finitely generated cones  are the same as polyhedral cones, because of the well-known Minkowski-Weyl Theorem~\cite[Theorem 3.52]{Rockafellar:2004}.

\begin{definition}[Tangent and Normal cones]
  Let $X$  be a nonempty closed convex set in $\RR^{n}$ and $ x\in X$. The \emph{tangent cone} of  $X$ at $ x$ is given by 
    \[
      \Tcone_{X}( x)\coloneqq \closu\left(\left\{ \lambda(y- x) \in \RR^{n}\mid y \in X, \lambda \in \RP\right \}\right).
    \]
    The \emph{normal cone} of $X$ at $x$ is the set defined by 
    \[
      \Ncone_{X}( x)\coloneqq \left\{ y \in \RR^{n}\mid \scal{y}{z-x}\leq 0 , z \in X\right \}.
    \]
    \end{definition}

\begin{fact}[Tangent cone of polyhedron {\cite[Theorem 6.46]{Rockafellar:2004}}]\label{fact:tangent-cone-poly}
If $\Omega\subset \RR^{n}$ is a polyhedron defined as in \cref{eq:polyhedron}, then   the tangent cone $\Tcone_{\Omega}(x)$, at any point $x\in \Omega$, is a polyhedral cone and can be represented as 
\[
\Tcone_{\Omega}(x) = \{w \in \RR^{n} \mid \scal{a_i}{w}\le 0, \text{ for } i \in \mathcal{I}(x)\},
\]
where $a_i\in\RR^{n}$ defines the polyhedron $\Omega$ and $\mathcal{I}(x) \coloneqq\{i  \mid \scal{a_i}{x} = \alpha_i \}$ is the active index set of $\Omega$ at $x$.
\end{fact}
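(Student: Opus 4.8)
The plan is to prove the two set inclusions between $\Tcone_\Omega(x)$ and the candidate cone $C\coloneqq\{w\in\RR^{n}\mid \scal{a_i}{w}\le 0,\ i\in\mathcal{I}(x)\}$, where I abbreviate the right-hand side by $C$ for brevity. Note first that $C$ is an intersection of finitely many homogeneous closed half-spaces, so it is automatically a polyhedral cone; this settles the structural claim at once and reduces everything to proving $\Tcone_\Omega(x)=C$. My strategy is to verify $\Tcone_\Omega(x)\subseteq C$ by a direct computation on rays, and then to establish the reverse inclusion by exhibiting, for each $w\in C$, a genuine line segment emanating from $x$ that lies in $\Omega$.

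For the inclusion $\Tcone_\Omega(x)\subseteq C$, I would start with an element of the pre-closure direction set, that is, a vector of the form $w=\lambda(y-x)$ with $y\in\Omega$ and $\lambda\in\RP$. For any active index $i\in\mathcal{I}(x)$ one has $\scal{a_i}{x}=\alpha_i$ and $\scal{a_i}{y}\le\alpha_i$, whence $\scal{a_i}{y-x}\le 0$ and therefore $\scal{a_i}{w}=\lambda\scal{a_i}{y-x}\le 0$. Thus every such $w$ lies in $C$. Since $C$ is closed, passing to the closure in the definition of $\Tcone_\Omega(x)$ preserves the inclusion, giving $\Tcone_\Omega(x)\subseteq C$.

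The reverse inclusion $C\subseteq\Tcone_\Omega(x)$ is where the polyhedral (finiteness) structure does the real work. Given $w\in C$, I would show that $x+tw\in\Omega$ for all sufficiently small $t>0$; this immediately places $w=\tfrac{1}{t}\big((x+tw)-x\big)$ in the raw direction set and hence in $\Tcone_\Omega(x)$. Checking membership amounts to verifying $\scal{a_i}{x+tw}\le\alpha_i$ for each $i$. For active $i\in\mathcal{I}(x)$ this holds for every $t\ge 0$, because $\scal{a_i}{x+tw}=\alpha_i+t\scal{a_i}{w}\le\alpha_i$. For each inactive index $i\notin\mathcal{I}(x)$ there is strict slack $\alpha_i-\scal{a_i}{x}>0$, so the constraint survives for all $t$ below a positive threshold depending on that slack and on $\scal{a_i}{w}$. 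Because there are only finitely many constraints, I can take the minimum of these finitely many positive thresholds to obtain a single $t>0$ that works for all $i$ simultaneously, completing the inclusion.

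The main obstacle, and the reason the statement is special to polyhedra rather than general convex sets, is precisely this last synchronization step: only the finiteness of the constraint list guarantees a uniform positive step $t$, and only then does the raw direction set already contain $w$ so that no closure operation is needed. For a general convex set the inactive-constraint argument has no analogue, the tangent cone genuinely requires the closure, and the clean equality with a finitely described cone can fail. The degenerate case $w=0$ is trivial and may be absorbed into the above by observing that $0\in C\cap\Tcone_\Omega(x)$ always.
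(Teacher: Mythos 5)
Your proof is correct and complete. The paper does not prove this statement at all---it is stated as a Fact with a citation to Rockafellar and Wets (Theorem 6.46), so there is no in-paper argument to compare against. Your two-inclusion argument is the standard one: the forward inclusion follows from checking the generating rays $\lambda(y-x)$ against the active constraints and using closedness of $C$, and the reverse inclusion uses the finite slack of the inactive constraints to produce a uniform step length $t>0$ with $x+tw\in\Omega$, which is exactly where polyhedrality is needed. Your closing remark correctly identifies why the closure operation becomes superfluous in the polyhedral case and why the analogous equality fails for general convex sets.
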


\begin{fact}[Finitely many tangent cones of a polyhedron~\cite{Bertsekas:1999}]\label{cor:finitetangentcones} If $\Omega$ is a polyhedron, then the set of all tangent cones  $\{\Tcone_{\Omega}(x)\mid x\in \Omega\}$ has finite cardinality. Moreover, for any $x\in\Omega$, there exists a radius $\delta>0$ such that $(\Tcone_{\Omega}(x) +x) \cap \mathcal{B}_{\delta}(x)=\Omega \cap \mathcal{B}_{\delta}(x)$, that is, $\Omega$ coincides locally with any shifted  tangent cone to it.
\end{fact}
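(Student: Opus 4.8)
The plan is to lean entirely on \Cref{fact:tangent-cone-poly}, which expresses the tangent cone at a point $x\in\Omega$ solely in terms of the \emph{active index set} $\mathcal{I}(x)=\{i\mid \scal{a_i}{x}=\alpha_i\}$. For the finiteness claim, observe that \Cref{fact:tangent-cone-poly} gives $\Tcone_{\Omega}(x)=\{w\in\RR^{n}\mid \scal{a_i}{w}\le 0,\ i\in\mathcal{I}(x)\}$, so the cone is completely determined by the subset $\mathcal{I}(x)$ of the index set $\{1,\dots,m\}$. Since this index set has only $2^{m}$ subsets, the assignment $x\mapsto\Tcone_{\Omega}(x)$ takes at most $2^{m}$ distinct values, and hence $\{\Tcone_{\Omega}(x)\mid x\in\Omega\}$ is finite.

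For the local coincidence, I would fix $x\in\Omega$, abbreviate $\mathcal{I}\coloneqq\mathcal{I}(x)$, and split the defining inequalities of $\Omega$ (as in \cref{eq:polyhedron}) into the active ones ($i\in\mathcal{I}$, where $\scal{a_i}{x}=\alpha_i$) and the inactive ones ($i\notin\mathcal{I}$, where $\scal{a_i}{x}<\alpha_i$). The point is that a point $z$ lies in the shifted cone $\Tcone_{\Omega}(x)+x$ precisely when $\scal{a_i}{z}\le\alpha_i$ for every active $i$, whereas $z$ lies in $\Omega$ when this holds for \emph{all} $i$; thus the two sets differ only through the inactive constraints. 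Since each inactive constraint has strictly positive slack $\alpha_i-\scal{a_i}{x}>0$, I would choose $\delta\coloneqq\min\{(\alpha_i-\scal{a_i}{x})/\norm{a_i}\mid i\notin\mathcal{I},\ a_i\ne 0\}$ (taking $\delta\coloneqq 1$, say, when this index set is empty), so that the Cauchy--Schwarz estimate $\scal{a_i}{z}\le\scal{a_i}{x}+\norm{a_i}\,\norm{z-x}<\alpha_i$ forces every inactive inequality to remain satisfied on $\BB_{\delta}(x)$.

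With this $\delta$ fixed, the equality $(\Tcone_{\Omega}(x)+x)\cap\BB_{\delta}(x)=\Omega\cap\BB_{\delta}(x)$ follows by checking both inclusions: any $z\in\Omega\cap\BB_{\delta}(x)$ satisfies the active inequalities and hence lies in the shifted cone, while any $z\in(\Tcone_{\Omega}(x)+x)\cap\BB_{\delta}(x)$ satisfies the active inequalities by membership in the cone and the inactive ones by the choice of $\delta$, hence lies in $\Omega$. The only genuine subtlety---where I expect the (mild) crux to be---is the uniform choice of $\delta$: it must simultaneously neutralize the finitely many strictly satisfied inactive constraints, which is exactly what the minimum over those indices achieves. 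When $\mathcal{I}=\{1,\dots,m\}$ the inactive family is empty and the shifted cone coincides with $\Omega$ globally, so any $\delta>0$ works and the claim is immediate.
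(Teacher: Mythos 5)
Your argument is correct. The paper states this as a \emph{Fact} cited to Bertsekas and gives no proof of its own, so there is nothing to compare against; your proof is the standard one and fills the gap cleanly: finiteness via the at most $2^{m}$ possible active index sets, and local coincidence via the observation that $\Tcone_{\Omega}(x)+x$ is cut out by the active constraints alone while the inactive constraints have uniformly positive slack on a small ball, with the Cauchy--Schwarz choice of $\delta$ (and the degenerate cases $a_i=0$ and $\mathcal{I}(x)=\{1,\dots,m\}$) handled correctly.
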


Finally, it is noteworthy that the distance between two  non-intersecting polyhedral sets is attained. 

\begin{fact}[Distance between polyhedra is attained~{\cite[Theorem 5]{Cheney:1959}}]\label{fact:polyMAP} 
Let $\Omega_1,\Omega_2\subset\RR^{n}$ be nonempty polyhedra with $\Omega_1\cap\Omega_2=\emptyset$. Then, $
\dist(\Omega_1,\Omega_2)$ is attained.
\end{fact}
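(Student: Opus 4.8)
The plan is to reduce the statement to the criterion recorded in the discussion of the displacement vector in \Cref{sec:facts}: setting $d \coloneqq P_{\closu(\Omega_1 - \Omega_2)}(0)$, one has $\norm{d} = \dist(\Omega_1,\Omega_2)$, and the distance is attained if and only if $d \in \Omega_1 - \Omega_2$; in particular, it is attained whenever the difference set $\Omega_1 - \Omega_2 = \{x - y \mid x \in \Omega_1,\ y \in \Omega_2\}$ is closed. Indeed, closedness forces $\closu(\Omega_1 - \Omega_2) = \Omega_1 - \Omega_2$ and hence $d \in \Omega_1 - \Omega_2$, so the whole argument collapses to proving that this difference set is closed.

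To establish closedness, I would show that $\Omega_1 - \Omega_2$ is itself a polyhedron. First observe that $-\Omega_2$ is a polyhedron (flipping the signs of the half-space normals defining $\Omega_2$, or negating the point data in \Cref{fact:finite-generators}), so it suffices to prove that the Minkowski sum of two polyhedra is a polyhedron. Invoking \Cref{fact:finite-generators}, I would write each set in finitely generated form $\Omega_1 = \conv(V_1) + K_1$ and $-\Omega_2 = \conv(V_2) + K_2$, where $V_1, V_2$ are finite sets of points and $K_1, K_2$ are finitely generated cones.

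The key step is the identity $\Omega_1 + (-\Omega_2) = \conv(V_1 + V_2) + (K_1 + K_2)$, where $V_1 + V_2 \coloneqq \{v_1 + v_2 \mid v_1 \in V_1,\ v_2 \in V_2\}$ is again finite and $K_1 + K_2$ is again a finitely generated cone, generated by the union of finite generating sets of $K_1$ and $K_2$. This rests on $\conv(V_1) + \conv(V_2) = \conv(V_1 + V_2)$, which follows by writing out convex combinations and regrouping coefficients. It exhibits $\Omega_1 - \Omega_2$ in the finitely generated form of \Cref{fact:finite-generators}, hence it is a polyhedron and in particular closed.

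Finally, since $\Omega_1 \cap \Omega_2 = \emptyset$ we have $0 \notin \Omega_1 - \Omega_2$, so $\dist(\Omega_1,\Omega_2) = \norm{d} > 0$; more importantly, closedness yields $d \in \Omega_1 - \Omega_2$, i.e.\ $d = \bar x - \bar y$ for some $\bar x \in \Omega_1$ and $\bar y \in \Omega_2$ with $\norm{\bar x - \bar y} = \dist(\Omega_1,\Omega_2)$, so that the distance is attained at $(\bar x, \bar y)$. I expect the only genuinely delicate point to be the bookkeeping in the Minkowski-sum identity---specifically verifying that the conic parts combine into a single finitely generated cone and that the convex-hull parts merge into $\conv(V_1 + V_2)$---while the reduction to closedness and the final extraction of the minimizing pair are immediate from the stated facts.
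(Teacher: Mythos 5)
The paper does not prove this statement at all: it is imported verbatim as a Fact with a citation to Cheney and Goldstein, so there is no internal proof to compare against. Your argument is correct and self-contained, and it meshes well with the toolkit the paper does set up: the reduction to closedness of $\Omega_1-\Omega_2$ is exactly the criterion recorded in \Cref{sec:facts} (the displacement vector $d=P_{\closu(\Omega_1-\Omega_2)}(0)$ lies in $\Omega_1-\Omega_2$ once that set is closed, and then $d=\bar x-\bar y$ realizes the distance), and the closedness itself follows from \Cref{fact:finite-generators} via the Minkowski--Weyl representation. The one identity you flag as delicate, $\conv(V_1)+\conv(V_2)=\conv(V_1+V_2)$, does hold: for one inclusion write $\sum_i\lambda_i v_i^{(1)}+\sum_j\mu_j v_j^{(2)}=\sum_{i,j}\lambda_i\mu_j\bigl(v_i^{(1)}+v_j^{(2)}\bigr)$ with $\sum_{i,j}\lambda_i\mu_j=1$, and for the other marginalize the weights $\theta_{ij}$ over each index. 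Likewise $\cone(B_1)+\cone(B_2)=\cone(B_1\cup B_2)$ since conic combinations admit zero coefficients. The only caveat worth stating explicitly is that $\closu(\Omega_1-\Omega_2)$ is a nonempty closed \emph{convex} set (differences of convex sets are convex), so that the projection of the origin defining $d$ exists and belongs to it; with that noted, the proof is complete. Cheney and Goldstein's original Theorem 5 reaches the conclusion by different means (properties of proximity maps for a finitely generated set paired with a closed convex set), so your polyhedral-calculus route is a legitimately more elementary and purely geometric alternative for the case of two polyhedra.
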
 

\section{On the geometry of two disjoint convex sets under error bound condition}\label{sec:error-bound}

This section is divided in two subsections, gathering key contributions of our paper. In the first subsection, we investigate the geometry under which a single alternating projection step reaches a best approximation pair when the sets are apart from each other. In the second one, we compare well-known regularity conditions from the literature with our proposed error bound. 

\subsection{BAP-EB and alternating projection step}
Here we start stating that a single alternating projection step yields a best approximation pair to a polyhedron $\Omega$ and a hyperplane $H$, if $\Omega\cap H=\emptyset$ and  the alternating projection step is taken from a point sufficiently close to $\bap_H(\Omega)$.

\begin{lemma}[Alternating projection step for polyhedron versus hyperplane under inconsistency]\label{thm:SingleStepMAP-PolyHyp}
  Consider two nonempty sets $\Omega,H\subset \RR^{n}$ such that $\Omega$ is a polyhedron, $H$ is a hyperplane and $\Omega\cap H = \emptyset$. Then, there exists a radius $r>0$ such that  
  \[\label{eq:SingleStepMAP-PolyHyp}
    P_{\Omega}P_{H}(z) \in \bap_H(\Omega), 
  \]
  for all $z\in\RR^n$ satisfying $\dist(z,\bap_H(\Omega))\leq r$.
\end{lemma}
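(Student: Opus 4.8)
The plan is to pin down the geometry of $\bap_{H}(\Omega)$ and then show that a single step $P_{\Omega}P_{H}$ collapses an entire neighborhood of this set onto one nearest point. First I would fix a unit vector $a$ and scalar $\beta$ with $H=\{x\mid\scal{a}{x}=\beta\}$. Since $\Omega$ is convex and disjoint from the hyperplane $H$, it lies in one open half-space, say $\scal{a}{x}>\beta$ on $\Omega$, and by \Cref{fact:polyMAP} the number $\delta\coloneqq\dist(\Omega,H)>0$ is attained. I would then identify $F\coloneqq\bap_{H}(\Omega)$ with the exposed face $\Omega\cap H^{*}$, where $H^{*}\coloneqq\{x\mid\scal{a}{x}=\mu^{*}\}$ is the supporting hyperplane parallel to $H$ with $\mu^{*}=\beta+\delta$; indeed, minimizing $\dist(x,H)=\scal{a}{x}-\beta$ over $\Omega$ is the same as minimizing the linear functional $\scal{a}{\cdot}$. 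This records three facts I will use repeatedly: the displacement vector is $d=\delta a$; for every $\bar z\in F$ one has $-a\in\Ncone_{\Omega}(\bar z)$ (because $\bar z$ minimizes $\scal{a}{\cdot}$ on $\Omega$), so that $\scal{a}{v}\ge 0$ for all $v\in\Tcone_{\Omega}(\bar z)$; and $\Tcone_{F}(\bar z)=\Tcone_{\Omega}(\bar z)\cap a^{\perp}$.

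Next, for $z$ with $\dist(z,F)\le r$ I would set $\bar z\coloneqq P_{F}(z)$ and write $p_{\parallel}\coloneqq (z-\bar z)-\scal{a}{z-\bar z}\,a\in a^{\perp}$ for the tangential part of $z-\bar z$. A direct computation of $w\coloneqq P_{H}(z)=z-(\scal{a}{z}-\beta)a$ gives $w-\bar z=p_{\parallel}-\delta a$, with $\norm{p_{\parallel}}\le\norm{z-\bar z}=\dist(z,F)\le r$. The key structural observation is that $p_{\parallel}\in\Ncone_{F}(\bar z)$: from $\bar z=P_{F}(z)$ we have $z-\bar z\in\Ncone_{F}(\bar z)$, and since $F\subset H^{*}$ gives $\pm a\in\Ncone_{F}(\bar z)$, subtracting the $a$-component keeps us in the convex cone $\Ncone_{F}(\bar z)$. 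In particular $\scal{p_{\parallel}}{v}\le 0$ for every $v\in\Tcone_{F}(\bar z)=\Tcone_{\Omega}(\bar z)\cap a^{\perp}$.

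The claim I would then prove is $P_{\Omega}(w)=\bar z\in F$, which by the projection characterization is exactly $w-\bar z=p_{\parallel}-\delta a\in\Ncone_{\Omega}(\bar z)$, i.e. $\scal{p_{\parallel}-\delta a}{v}\le 0$ for all $v\in T\coloneqq\Tcone_{\Omega}(\bar z)$. For $v\in T$ with $\scal{a}{v}=0$, i.e. $v\in\Tcone_{F}(\bar z)$, the term $-\delta\scal{a}{v}$ vanishes and $\scal{p_{\parallel}}{v}\le 0$ by the previous paragraph; for $v$ with $\scal{a}{v}>0$ the strictly negative term $-\delta\scal{a}{v}$ must absorb $\scal{p_{\parallel}}{v}$. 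To make this quantitative I would pass to a conic base $\{g_{1},\dots,g_{m}\}$ of the polyhedral cone $T$ (\Cref{fact:tangent-cone-poly} together with \Cref{def:cones,fact:finite-generators}), split the generators by the sign of $\scal{a}{g_{j}}$, and reduce the inequality, by conic combination, to the per-generator bound $\norm{p_{\parallel}}\le\delta\,\scal{a}{g_{j}}/\norm{g_{j}}$ for each generator with $\scal{a}{g_{j}}>0$ (the generators in $a^{\perp}$ contribute $\scal{p_{\parallel}}{g_{j}}\le 0$).

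The remaining, and main, difficulty is the uniformity of the radius $r$: because the projection step moves $w$ a distance of order $\delta$ — which is not small — a purely local tangent-cone replacement (\Cref{cor:finitetangentcones}) cannot be invoked directly on the large displacement. The resolution is that, by \Cref{cor:finitetangentcones}, $\Omega$ has only finitely many distinct tangent cones, so only finitely many generators arise across all $\bar z\in F$; setting $\gamma\coloneqq\min\scal{a}{g}/\norm{g}$ over this finite family of generators with $\scal{a}{g}>0$ yields $\gamma>0$, and any $r\le\delta\gamma$ works simultaneously for every $\bar z\in F$. With such $r$ we obtain $w-\bar z\in\Ncone_{\Omega}(\bar z)$, hence $P_{\Omega}P_{H}(z)=P_{\Omega}(w)=\bar z\in\bap_{H}(\Omega)$, which is \cref{eq:SingleStepMAP-PolyHyp}. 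I expect this uniform angle bound $\gamma>0$ — keeping the radius bounded away from zero even when $F$ is unbounded — to be the technical heart of the argument, whereas the normal-cone verification above is routine.
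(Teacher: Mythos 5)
Your proposal is correct and follows essentially the same route as the paper's proof: identify $\bap_H(\Omega)$ with the face of $\Omega$ exposed by the displacement direction, split the (finitely many, by polyhedrality) tangent-cone generators at points of $\bap_H(\Omega)$ according to whether they are orthogonal to $d=\delta a$ or make a positive angle with it, handle the orthogonal ones via the projection onto $\bap_H(\Omega)$ and the transversal ones via the uniform margin $\gamma=\min\scal{a}{g}/\norm{g}>0$, and take $r=\delta\gamma$ — which is exactly the paper's $r=\min_{v\in V}\scal{v}{d}$ over normalized generators. Your quantity $\norm{p_{\parallel}}$ is precisely the paper's $\norm{z_H+d-\bar z}$, so even the Cauchy--Schwarz/Pythagoras step coincides.
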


\begin{proof}  For a point $x\in \Omega$, let $\Tcone_{\Omega}(x)$ denote the tangent cone of $\Omega$ at $x$. Since $\Omega$ is a polyhedron, the set of all tangent cones  $\{\Tcone_{\Omega}(x)\mid x\in \Omega\}$ is finite and each $\Tcone_{\Omega}(x)$ is a finitely generated cone (see \Cref{fact:finite-generators,fact:tangent-cone-poly,cor:finitetangentcones}).  In particular, the cardinality of $\Gamma\coloneqq\{\Tcone_{\Omega}(x)\mid x\in \bap_{H}(\Omega)\}$ is finite since $\bap_{H}(\Omega)\subset \Omega$ and each tangent cone in $\Gamma$ has a finite number of generators. Consider now the collection of all normalized generators with respect to cones in $\Gamma$ denoted by $W\coloneqq \{w \in \RR^{n} \mid \norm{w}= 1, \text{ such that} \; w \text{ belongs to a conic  base of some } \Tcone_{\Omega}(x), x\in \bap_{H}(\Omega)   \}$. 

Let $d$ be the displacement vector. \Cref{fact:dist-attained} allows us to conveniently categorize the generators in $W$.  For the disjoint finite sets $U\coloneqq\{u\in W\mid \norm{u}=1, \scal{u}{d}=0\}$ and $V\coloneqq\{v\in W\mid \norm{v}=1,  \scal{v}{d} > 0\}$, we have  $W = U\cup V$. The  finiteness and definition of $V$ provide the existence and positivity of  
\[\label{eq:defr} r\coloneqq\min_{v\in V}\{\scal{v}{d}\}.\] 
The facts that $r>0$ and $\norm{d}>0$ are key for the statement  \eqref{eq:SingleStepMAP-PolyHyp}.

  Take $z\in \RR^{n}$ arbitrary, but fixed, such that $\dist(z,\bap_{H}(\Omega))\leq r$. In order to shorten the notation, set $\bar z\coloneqq P_{\bap_{H}(\Omega)}(z)$ and $z_{H}\coloneqq P_{H}(z)$.
Let $y$ be an element of $\bap_H(\Omega)$. \cref{fact:bapsets}(iii) implies that $ y-d$ and $\bar{z}-d$ are contained in $\bap_\Omega(H)\subseteq H$. Since $z_H\in H$ and $H$ is affine,
$ y-\bar{z}+z_H = (y-d)-(\bar{z}-d)+z_H \in H.$
The  fact that $H$ is a hyperplane  and the characterization of the best approximation for $\Omega$ and $H$ imply
\begin{align}
\langle z_H-\bar{z},y-\bar{z}\rangle
&= \langle z-\bar{z},y-\bar{z}\rangle + \langle z_H-z,(y-\bar{z}+z_H)-z_H\rangle = \langle z-\bar{z},y-\bar{z}\rangle  \leq  0.
\end{align}
Thus, $\bar{z}=P_{\bap_H(\Omega)}(z_H)$.



  Let us now look at the angle between $z_{H}-\bar z$ and vectors in $\Tcone_{\Omega}(\bar z)$. Since $\bar z$ is in $\bap_{H}(\Omega)$, all the generators of the tangent cone $\Tcone_{\Omega}(\bar z)$ must be contained in $W$. Recall that $W$ is split as $U\cup V$ and, therefore, in order to investigate the sign of   $\scal{w}{z_{H}-\bar z}$,
  consider the two cases below:
  \begin{enumerate}[(a), nosep, format=\bf,leftmargin=*,align=right]
  \item $w$ is a generator of $\Tcone_{\Omega}(\bar z)$ belonging to $U$;

  \item $w$ is a generator of $\Tcone_{\Omega}(\bar z)$ belonging to $V$.
  \end{enumerate}

Case (a). For $w$ to be a generator of $\Tcone_{\Omega}(\bar z)$ belonging to $U$,  it must be a generator of $\Tcone_{\bap_{H}(\Omega)}(\bar z)$, since $\bap_{H}(\Omega) \subset H+d$. Bear in mind that a polyhedron coincides locally with its tangent cone (See \Cref{cor:finitetangentcones}). So, for some radius $\varepsilon>0$, $\bap_{H}(\Omega)\cap \BB_{\varepsilon}(\bar z) = (\Tcone_{\bap_{H}(\Omega)}(\bar z) + \bar{z})\cap  \BB_{\varepsilon}(\bar z) $. Thus, for all $t>0$ sufficiently small, $\bar z + tw\in \bap_{H}(\Omega)$ and since $P_{\bap_{H}(\Omega)}(z_{H}) = \bar z$, by projections onto convex sets, we have  $\scal{\bar z + tw - \bar z}{z_{H}-\bar z} \leq 0$, therefore  $\scal{w}{z_{H}-\bar z} \leq 0$.

Case (b). This is the case in which the constant $r>0$ defined in \cref{eq:defr} is going to be employed. We now have $w\in V$,  
\begin{align}
\scal{w}{z_{H}-\bar z} &= \scal{w}{z_{H}-\bar z  + d} - \scal{w}{d}  \\
              &\leq \norm{w}\norm{z_{H} + d  - \bar z }- \scal{w}{d}\\
              &\leq  \norm{z -\bar z }- \scal{w}{d} \\
              &\leq r-\scal{w}{d}\leq 0,
\end{align}
where we used Cauchy-Schwarz  in the first inequality, the fact that $w$ is a unit vector and the Pythagoras argument  $ \norm{z_{H} + d  - \bar z }^2 =\norm{z -\bar z }^2  - \norm{z_{H} + d - z}^2  $ in the second one, and the definition of $r$ given in \cref{eq:defr} in the last one. 

Hence, cases (a) and (b) have shown that $\scal{z_{H}-\bar z}{w}\leq 0$ for any normalized generator $w$ of the cone $\Tcone_{\Omega}(\bar z)$. This means that the projection of $z_{H}$ onto the shifted cone $\Tcone_{\Omega}(\bar z)+\bar z$ is given by $\bar z$. Since $\bar z\in \Omega$ and $\Omega\subset \Tcone_{\Omega}(\bar z)+\bar z$, we get that $P_{\Omega}(z_{H}) = \bar z$. Recalling that we set $z_{H} = P_H(z)$, the proof is finished.
\end{proof}

We remark that replacing the hyperplane $H$ by an arbitrary polyhedron in the previous lemma  does not, in general,  guarantee the statement \cref{eq:SingleStepMAP-PolyHyp}; see \cref{ex:lackoferrorbound}. The key for reaching a best point in a single alternating step essentially relies on the existence of a suitable error bound   between the two non-intersecting sets, which in the case of \Cref{thm:SingleStepMAP-PolyHyp} is automatically fulfilled. 

The geometrical appeal of \Cref{thm:SingleStepMAP-PolyHyp} will inspire us to formulate the so-called \emph{BAP error bound}, which depends on the  definition of optimal supporting hyperplane, next.  


\begin{definition}[Optimal supporting hyperplane]\label{def:optimalhyperplane} Let $X,Y\subset\RR^{n}$ be closed convex sets such that $X\cap Y=\emptyset$ and that $\dist(X,Y)$ is attained. We say that
\[\supH_{Y}(X) \coloneqq \left\{z \in \RR^n \mid \scal{z - \bar x}{d} = 0, \bar x\in \bap_Y(X)\right\}\] 
is the \emph{optimal supporting hyperplane to $X$ regarding $Y$}, where $d$ is the displacement vector.  
\end{definition}
\begin{figure}[!htpb]
  \centering
  \includegraphics[width=.50\textwidth]{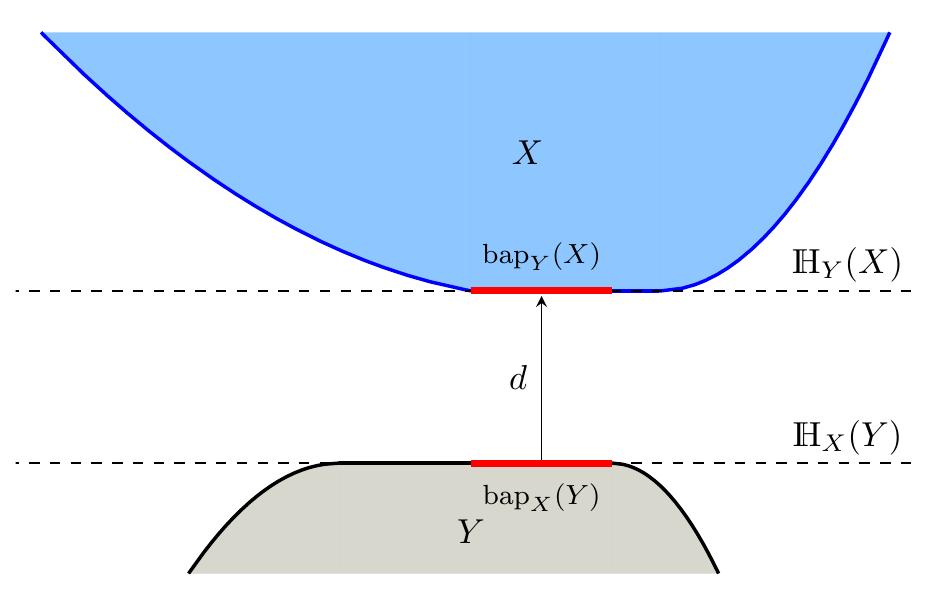}
  \caption{Disjoint convex sets $X$ and $Y$, BAPs and optimal hyperplanes.} 
  \label{fig:BAP}
\end{figure}

Note that $\supH_{Y}(X)$ is well-defined since any points $\hat x, \bar x\in \bap_{Y}(X)$ satisfy  $\scal{\hat x- \bar x}{d}=0$. \Cref{def:optimalhyperplane} together with \Cref{fact:bapsets}(iii) imply that, for two disjoint closed convex sets $X,Y\subset \RR^{n}$ with attainable distance, $\bap_{Y}(X)  = X \cap (Y+d) \subset \supH_Y(X)$ and  $\bap_{X}(Y)  = (X-d)\cap Y  \subset \supH_X(Y)$. \Cref{fig:BAP} illustrates all these sets related to $X$ and $Y$ as well as the displacement vector $d$.

We now formally present the BAP error bound in a non-symmetrical version.

\begin{definition}[Unilateral BAP error bound]\label{def:BAP-error-bound}
  Let $X, Y \subset \RR^{n}$ be non-intersecting  closed convex sets and assume that the distance between them is attained. We say that $X$ and $Y$ satisfy the \emph{unilateral BAP error bound (unilateral BAP-EB)}   at $x^{*}\in  \bap_Y(X)$ if there exist a bound  $\omega > 0$ and a radius $\delta > 0$ such that  the following inequality holds
  \begin{equation}\label{eq:error-bound}
  \omega \dist(x,\bap_{Y}(X)) \leq \dist(x, \supH_{Y}(X)), \text{ for all } x\in \BB_{\delta}(x^{*})\cap X.
  \end{equation}
  \end{definition}

\cref{def:BAP-error-bound} is a condition where the error bound is concentrated in only one of the two sets, therefore, the term \emph{unilateral}. Nevertheless, the responsibility of carrying an error bound can be distributed between the sets $X$ and $Y$. 
In this regard, we will introduce  in \cref{def:BAP-error-bound-II} a more general error bound condition, in which a bilateral concern about the error bound is embedded.


Error bounds are, in general, regularity conditions that allow one to deal with non-isolated solutions; see, for example,~\cite{Behling:2019,Behling:2013b}. The BAP-EB  resembles the well-known concept of local linear regularity\cite{Bauschke:1993,Bauschke:1996}, formally presented in \Cref{def:linear-regularity}. Linear regularity  is also known as subtransversality~\cite{Kruger:2018}. BAP-EB requires, depending on the context, a further geometrical feature between the sets, in comparison to linear regularity.
 In \Cref{sec:BAP-EB},  we present a detailed discussion on BAP-EB in which we compare it with the standard linear regularity (subtransversality) and with another regularity condition known as intrinsic transversality~\cite{Drusvyatskiy:2015,Drusvyatskiy:2016}.

Along this subsection, we are going to derive two important lemmas in which we consider the unilateral BAP-EB. They will be valuable tools towards establishing finite convergence of MAP in \Cref{sec:finitemethods} under the more general error bound condition BAP-EB that we introduce later in \cref{def:BAP-error-bound-II}.  

We present next a result encompassing a broader class of instances than the one in \cref{thm:SingleStepMAP-PolyHyp}, since we are going to consider a closed convex set $X$ and a hyperplane $H$ with empty intersection and attainable distance. Here, the optimal supporting hyperplane $\supH_{H}(X)$ to $X$ regarding $H$ coincides precisely with  $H+d$, that is, the hyperplane obtained by shifting $H$ by the displacement vector $d$. In this context, unilateral BAP-EB from \Cref{def:BAP-error-bound} is equivalent to local linear regularity. We prove this equivalence in \cref{prop:EB-linear-regularity}. Taking this equivalence into account, the next result states that the standard linear regularity condition leads to finite convergence of alternating projections when the target convex sets are disjoint and one of them is a hyperplane.



\begin{lemma}[Alternating projection step for a convex set versus hyperplane under inconsistency]
\label{thm:SingleStep-ConvexHyperplane}
Let $X, H \subset \RR^{n}$ be  a closed convex set and a hyperplane, respectively, and suppose that $X$ and  $H$ are disjoint with attainable distance. Assume that $X$ and $H$ satisfy the unilateral BAP error bound \cref{eq:error-bound} from \Cref{def:BAP-error-bound}  at  $x^{*}\in \bap_H(X)$, that is,  
 \[\label{eq:error-bound-condXH}
 \omega \dist(x,\bap_{H}(X)) \leq \dist(x, \supH_{H}(X)), \text{ for all } x\in \BB_{\delta}(x^{*})\cap X,
 \]
 with bound  $\omega > 0$ and  radius $\delta > 0$.
By setting $r \coloneqq \min\left\{\omega \dist(X,H),\frac{\delta}{2}\right\}$, we have that 
\[P_{X}P_{H}(z) \in \bap_{H}(X),\text{ for all } z\in  \BB_{r}(x^{*}).\]

\end{lemma}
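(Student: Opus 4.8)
The plan is to show that a single alternating projection step lands exactly on the metric projection of $z$ onto $\bap_H(X)$. I set $z_H \coloneqq P_H(z)$ and $\bar z \coloneqq P_{\bap_H(X)}(z)$. Since $\bap_H(X) \subseteq \supH_H(X)$ and $\supH_H(X)$ is the hyperplane through $\bar z$ with normal $d$, every vector $y - \bar z$ with $y \in \bap_H(X)$ is orthogonal to $d$, whereas $z - z_H$ is parallel to $d$. Exactly as in the proof of \Cref{thm:SingleStepMAP-PolyHyp}, this orthogonality yields $\langle z_H - \bar z, y - \bar z\rangle = \langle z - \bar z, y - \bar z\rangle \le 0$ for every $y \in \bap_H(X)$, so that $\bar z = P_{\bap_H(X)}(z_H)$ as well. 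The goal then reduces to proving $P_X(z_H) = \bar z$, i.e. the variational inequality $\langle y - \bar z, z_H - \bar z\rangle \le 0$ for all $y \in X$; since $\bar z \in \bap_H(X) \subseteq X$, this gives $P_X P_H(z) = \bar z \in \bap_H(X)$.

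Next I introduce the auxiliary vector $p \coloneqq z_H - \bar z + d$ and record its two properties. Because $z_H \in H$ and $\bar z - d \in H$ (as $\bap_H(X) = X \cap (H+d)$ by \Cref{fact:bapsets}), the vector $p$ is parallel to $H$, hence $\langle p, d\rangle = 0$. Consequently $z - \bar z = p + (z - z_H - d)$ is an orthogonal splitting (the second summand being parallel to $d$), so Pythagoras gives $\norm{p} \le \norm{z - \bar z} = \dist(z, \bap_H(X)) \le \norm{z - x^{*}} < r \le \omega\,\dist(X,H) = \omega\norm{d}$. Moreover, since $\bar z = P_{\bap_H(X)}(z_H)$ and $\langle y - \bar z, d\rangle = 0$ for $y \in \bap_H(X)$, the projection inequality rewrites as $\langle y - \bar z, p\rangle = \langle y - \bar z, z_H - \bar z + d\rangle \le 0$ for all $y \in \bap_H(X)$, that is, $p \in \Ncone_{\bap_H(X)}(\bar z)$.

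To verify the variational inequality I first localize: from $\norm{\bar z - x^{*}} \le \norm{\bar z - z} + \norm{z - x^{*}} < 2r \le \delta$ it follows that $\bar z \in \BB_{\delta}(x^{*})$, so there is $\rho > 0$ with $\BB_{\rho}(\bar z) \subseteq \BB_{\delta}(x^{*})$; by convexity of $X$ it suffices to check $\langle y - \bar z, z_H - \bar z\rangle \le 0$ for $y \in X \cap \BB_{\rho}(\bar z)$ and then rescale along the segment from $\bar z$ to a general point of $X$. For such $y$ the error bound applies, and I split $\langle y - \bar z, z_H - \bar z\rangle = \langle y - \bar z, p\rangle - \langle y - \bar z, d\rangle$. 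For the first term, writing $c \coloneqq P_{\bap_H(X)}(y)$ and using $p \in \Ncone_{\bap_H(X)}(\bar z)$ together with Cauchy–Schwarz and $\norm{p} \le \omega\norm{d}$, I get $\langle y - \bar z, p\rangle = \langle y - c, p\rangle + \langle c - \bar z, p\rangle \le \norm{y - c}\,\norm{p} \le \omega\norm{d}\,\dist(y, \bap_H(X))$. For the second term, since $\bar z \in \supH_H(X)$ has normal $d$ and \Cref{fact:dist-attained} gives $\langle y - \bar z, d\rangle \ge 0$, I have $\langle y - \bar z, d\rangle = \norm{d}\,\dist(y, \supH_H(X)) \ge \omega\norm{d}\,\dist(y, \bap_H(X))$ by \cref{eq:error-bound-condXH}. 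Subtracting the two estimates gives $\langle y - \bar z, z_H - \bar z\rangle \le 0$, as required.

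The main obstacle is the passage from the finite tangent-cone case analysis of \Cref{thm:SingleStepMAP-PolyHyp} to a genuinely non-polyhedral $X$: there is no finite generating set left to test against, so the two competing quantities must be matched analytically instead. The decisive trick is inserting $c = P_{\bap_H(X)}(y)$ to convert the uncontrolled term $\langle y - \bar z, p\rangle$ into something dominated by $\omega\norm{d}\,\dist(y, \bap_H(X))$ — precisely the lower bound the error bound forces on $\langle y - \bar z, d\rangle$. The constant $r = \omega\,\dist(X,H)$ is calibrated exactly so that $\norm{p} \le \omega\norm{d}$, which makes the two estimates cancel, while the radius $\delta/2$ only keeps the relevant points inside the region where the error bound holds. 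I would check the localization step and the sign conventions coming from \Cref{fact:dist-attained} most carefully, since those are where a slip could silently destroy the final cancellation.
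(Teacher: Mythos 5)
Your proof is correct, and it reaches the conclusion by a noticeably more direct route than the paper's. The paper argues by contradiction: it posits a point $w\in X$ violating the variational inequality, builds $x=tw+(1-t)\bar z\in\BB_\delta(x^*)\cap X$, projects it onto the auxiliary set $S\cap\supH_H(X)$ (where $S$ is the half-space $\{s\mid\scal{s-\bar z}{z_H-\bar z}\le 0\}$), and shows this $x$ would violate the error bound. You instead verify $\scal{y-\bar z}{z_H-\bar z}\le 0$ directly for every $y\in X$ near $\bar z$, via the orthogonal splitting $z_H-\bar z=p-d$ with $p\coloneqq z_H-\bar z+d$ parallel to $H$; the two competing estimates $\scal{y-\bar z}{p}\le\omega\norm{d}\dist(y,\bap_H(X))$ (obtained by inserting $c=P_{\bap_H(X)}(y)$ and using $p\in\Ncone_{\bap_H(X)}(\bar z)$ together with $\norm{p}\le\norm{z-\bar z}<r\le\omega\norm{d}$) and $\scal{y-\bar z}{d}=\norm{d}\dist(y,\supH_H(X))\ge\omega\norm{d}\dist(y,\bap_H(X))$ then cancel. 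The underlying mechanism is the same in both proofs — the calibration $r\le\omega\dist(X,H)$ forces the tangential component of $z_H-\bar z$ to be dominated by what the error bound extracts from the normal component — but your version avoids both the contrapositive and the projection onto the intersection $S\cap\supH_H(X)$, replacing it with the simpler nearest point $c=P_{\bap_H(X)}(y)$, and it isolates the geometry ($p\perp d$, $p\in\Ncone_{\bap_H(X)}(\bar z)$, the Pythagoras bound on $\norm{p}$) into reusable standalone facts. All the individual steps check out, including the identity $\bar z=P_{\bap_H(X)}(z_H)$, the localization $\bar z\in\BB_\delta(x^*)$ from $\norm{\bar z-x^*}<2r\le\delta$, the sign $\scal{y-\bar z}{d}\ge 0$ from \cref{fact:dist-attained}, and the rescaling along segments from $\bar z$ that upgrades the local inequality to all of $X$.
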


\begin{proof}

 Take $z\in  \BB_{r}(x^{*})$, arbitrary, but fixed, with $x^*$ and $r$ as enunciated in the hypothesis. Set $\bar z \coloneqq P_{\bap_{H}(X)}(z)$ and $z_{H}\coloneqq P_{H}(z)$ and define
\[
  S\coloneqq \{s\in \RR^{n} \mid \scal{s-\bar z}{z_{H} - \bar z} \leq 0\}. 
\]
Since, $X\cap H=\emptyset$, we have that $z_{H} - \bar z\neq 0$ and thus, $S$ is an affine half-space. The keystone of the proof is to show that 
\[\label{eq:OmegasubsetV}
  X\subset S.
\] 

If this claim is proved, we get, directly from the characterization of a projection onto a closed convex set, that  $\bar z = P_{X}(z_{H}) = P_{X}P_{H}(z)$. Therefore, let us draw our attention to proving that \cref{eq:OmegasubsetV} holds. Assume the contrary, that is, there exists a point $w\in X$ which does not lie in $S$. Then, for a sufficiently small $t\in(0,1]$, we have $x\coloneqq tw + (1-t)\bar z\in   \BB_{\delta}(x^{*}) \cap X$. In fact, $x\in X$, by convexity. Now, we have that
\begin{align}
\norm{x-x^{*}} & = \norm{tw+(1-t)\bar z - x^{*}} =\norm{t(w-x^{*})+(1-t)(\bar z - x^{*})} \\
               &\leq  t \norm{w-x^{*}} + (1-t)\norm{\bar z - x^{*}}\\
               &=  t \norm{w-x^{*}} + (1-t)\norm{P_{\bap_{H}(X)}(z) - P_{\bap_{H}(X)}(x^{*})}\\
               & \leq t \norm{w-x^{*}} + (1-t)\norm{z - x^{*}} \\
               & < t \norm{w-x^{*}} + (1-t)r \\
               &  \leq t \norm{w-x^{*}} + (1-t)\frac{\delta}{2}\\
               & < t \norm{w-x^{*}} + \frac{\delta}{2}, 
 \end{align}   
where we used the definition of $x$, the convexity of the norm, the definition of $\bar z$ and the fact that $x^{*} \in \bap_{H}(X)$, the nonexpansiveness of the projection, the assumption $z\in \BB_{r}(x^{*})$, the definition of $r$, and the fact that $1-t  < 1$, respectively. Let us take a fixed $t \in \left(0,\min\left\{1,\frac{\delta}{2\norm{w-x^{*}}}\right\}\right]$. Thus,  the correspondent $x = tw + (1-t)\bar z \in\BB_{\delta}(x^{*})\cap X$. Note also that
\begin{align}
   \scal{x - \bar z}{z_{H} - \bar z} & = \scal{tw + (1-t)\bar z  - \bar z}{z_{H} - \bar z}  \\
        & = t\scal{w - \bar z}{z_{H} - \bar z} \\
        &>0,
\end{align}
since $t>0$ and $w\notin S$. Hence, $x\notin S$.

We proceed by showing that this point $x$ does not comply with the error bound condition \cref{eq:error-bound}, leading to a contradiction.

Simple Pythagoras arguments imply that
$P_{\bap_{H}(X)}(z_{H}) = \bar z \coloneqq P_{\bap_{H}(X)}(z) $, as we will see next. Let $z_{\supH} \coloneqq P_{\supH_{H}(X)}(z) $, where $\supH_{H}(X)$ is the optimal supporting hyperplane to $X$ regarding $H$ introduced in \Cref{def:optimalhyperplane}. Reminding that, in this case, $\supH_{H}(X) = H+d$, we get $z_{\supH} = z_{H} + d$.
Then,
\begin{align}\norm{z -  \bar z}^{2}  - \norm{\bar z - z_{\supH}}^{2}  & =
  \norm{ z - z_{\supH}}^{2}  
  = \norm{z -  P_{\bap_{H}(X)}(z_{\supH})}^{2}  - \norm{P_{\bap_{H}(X)}(z_{\supH}) - z_{\supH}}^{2} \\
  & \geq \norm{z -  \bar z}^{2}  - \norm{P_{\bap_{H}(X)}(z_{\supH}) - z_{\supH}}^{2}.
\end{align}
Crossing out $\norm{P_{\bap_{H}(X)}(z_{\supH}) - z_{\supH}}^{2}$ yields  $\norm{\bar z - z_{\supH}} \leq \norm{P_{\bap_{H}(X)}(z_{\supH}) - z_{\supH}}$, which gives us $P_{\bap_{H}(X)}(z_{\supH}) = \bar z$. Similarly,   considering the Pythagoras relations, we have
\begin{align}
\norm{z_{H} -  \bar z}^{2}  - \norm{\bar z - z_{\supH}}^{2}  & =
  \norm{ z_{H} - z_{\supH}}^{2}  
  = \norm{z_{H} -  P_{\bap_{H}(X)}(z_{H})}^{2}  - \norm{P_{\bap_{H}(X)}(z_{H}) - z_{\supH}}^{2} \\
  & \leq \norm{z_{H} -  P_{\bap_{H}(X)}(z_{H})}^{2}  - \norm{\bar z - z_{\supH}}^{2},
\end{align}
because $\norm{\bar z - z_{\supH}} \leq \norm{P_{\bap_{H}(X)}(z_{H}) - z_{\supH}}$. After a cancellation, we get  \[\norm{ z_{H} - \bar z } \leq  \norm{z_{H} -  P_{\bap_{H}(X)}(z_{H})},\] providing  $P_{\bap_{H}(X)}(z_{H}) =    \bar z$.

The fact that $P_{\bap_{H}(X)}(z_{H})  = \bar z $ implies that $\scal{s - \bar z}{z_{H} - \bar z} \leq 0,$ for all $s\in \bap_{H}(X)$, and we conclude by definition of $S$ that $\bap_{H}(X) \subset S$. On the other hand, recall that $\bap_{H}(X) \subset \supH_{H}(X)$ (see  \Cref{def:optimalhyperplane}). In particular, \[\label{eq:bapOmegainScapHd}
\bap_{H}(X)\subset S\cap \supH_{H}(X).
\] 

 Now, let us  define $\hat x \coloneqq P_{S\cap \supH_{H}(X)}(x)$. Since $x\notin S$, we have  $\scal{x - \bar z}{z_{H} - \bar z } > 0$ and because $\hat x$ lies on the   affine half-space $S$, it holds that   $ \scal{\hat x - \bar z}{z_{H} - \bar z} \leq  0$.
 Hence, 
 \[\scal{x - \hat  x}{z_{H} - \bar z} = \scal{x - \bar z}{z_{H} - \bar z} + \scal{\bar z - \hat x}{z_{H} - \bar z}> 0.\]
  Moreover, 
\begin{align}
   0 & < \scal{x-\hat x}{z_{H} - \bar z} =  \scal{x-\hat x}{ z_{H}  -  (\bar z - d)   - d } \\ 
        & =  \scal{x-\hat x}{z_{H}  -  (\bar z - d) }  - \scal{x-\hat x}{d}  \\
        & \leq  \norm{x-\hat x}\norm{z_{H}  + d - \bar z  }- \scal{x-\hat x}{d} \\
        & = \norm{x-\hat x}\norm{P_{\supH_{H}(X)}(z) - P_{\supH_{H}(X)} (\bar z) }- \scal{x-\hat x}{d}\\ 
        & \leq \norm{x-\hat x}\norm{z - \bar z }- \scal{x-\hat x}{d}\\ 
         & <  \norm{x-\hat x}r - \scal{x-\hat x}{d}, \label{eq:FiniteMAP-Convex-1}
\end{align}
where the second inequality is by Cauchy-Schwarz, the third is by the nonexpansiveness of projection   and the last one follows from $\norm{z-\bar z} \leq \norm{z - x^{*}}< r$. Therefore, 
\[\label{eq:FiniteMAP-Convex-2}
\scal{x-\hat x}{d} < r\norm{x-\hat x} \leq  \omega\norm{d}\norm{x-\hat x},\] 
by \cref{eq:FiniteMAP-Convex-1} and the definition of $r$. 

Now,  let $x_{H} \coloneqq P_{H}(x) $ and $x_{\supH} \coloneqq P_{\supH_{H}(X)}(x)  = x_{H} + d$. Then, 
\[\label{eq:FiniteMAP-Convex-3}
\scal{x-\hat x}{d} = \scal{x-x_{\supH} + x_{\supH} - \hat x}{d} = \scal{x-x_{\supH}}{d} + \scal{x_{\supH} - \hat x}{d}  = \scal{x-x_{\supH}}{d},\]as $\hat x, x_{\supH}\in \supH_{H}(X)$ and $d\perp \supH_{H}(X)$. Due to the fact that $\supH_{H}(X)$ is a hyperplane, $x-x_{\supH}$ is collinear to $d$, so  Cauchy-Schwarz holds sharply, that is,   $\abs{\scal{x-x_{\supH}}{d}} = \norm{x-x_{\supH}}\norm{d}$. Moreover, since $x\in X$, the inner product $\scal{x-x_{\supH}}{d}$ is nonnegative and thus $\scal{x-x_{\supH}}{d} = \norm{x-x_{\supH}}\norm{d}$, which combined with \cref{eq:FiniteMAP-Convex-2,eq:FiniteMAP-Convex-3}
provides
\[
\norm{x-x_{\supH}}\norm{d} < \omega\norm{d}\norm{x-\hat x}.
\]
This inequality, together with  $\bap_{H}(X)\subset  S\cap \supH_{H}(X)$, as proved in \cref{eq:bapOmegainScapHd},   yields
\begin{align}
\dist(x,\supH_{H}(X))  & = \norm{x-x_{\supH}}  \\
& < \omega \norm{x-\hat x} =   \omega \dist(x,S\cap \supH_{H}(X)) \\
& \leq   \omega \dist(x,\bap_{H}(X)),
\end{align}
which  contradicts the error bound assumption \cref{eq:error-bound-condXH}, because $x \in\BB_{\delta}(x^{*})\cap X$ and $\bap_{H}(X) = X\cap \supH_{H}(X)$.
\end{proof}

The previous result leads to another contribution of this paper. We show that the two ingredients, infeasibility and unilateral BAP-EB, imply that a single alternating projection step can locally reach a best approximation pair.

\begin{lemma}[Alternating projection step under infeasibility and unilateral BAP-EB]\label{thm:SingleStep-Convex}
Let $X, Y \subset \RR^{n}$ be  closed convex sets such that $X\cap Y = \emptyset$ and assume that the distance between them is attained.  Assume that $X$ and $Y$ satisfy the unilateral BAP error bound \cref{eq:error-bound} from \Cref{def:BAP-error-bound}  at  $x^{*}\in \bap_Y(X)$, that is,  
 \[\label{eq:error-bound-cond}
 \omega \dist(x,\bap_{Y}(X)) \leq \dist(x, \supH_{Y}(X)),\text{ for all } x\in \BB_{\delta}(x^{*})\cap X,
 \]
 with bound $\omega > 0$ and radius $\delta > 0$. By setting $r \coloneqq \min\left\{\omega \dist(X,Y),\frac{\delta}{2}\right\}$ we have that for all $z\in  \BB_{r}(x^{*})$,
\[P_{X}P_{Y}(z) \in \bap_{Y}(X).\]

\end{lemma}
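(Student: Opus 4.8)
The plan is to mirror the proof of \Cref{thm:SingleStep-ConvexHyperplane}, replacing the projection onto the hyperplane by the projection onto the convex set $Y$ and replacing every use of the hyperplane--shift identity by the (firm) nonexpansiveness of $P_Y$. Fix $z\in\BB_r(x^*)$ and set $\bar z\coloneqq P_{\bap_Y(X)}(z)$ and $z_Y\coloneqq P_Y(z)$; since $X\cap Y=\emptyset$ and $\bar z\in X$, one has $z_Y\neq\bar z$, so
\[
S\coloneqq\left\{s\in\RR^n\mid \scal{s-\bar z}{z_Y-\bar z}\le 0\right\}
\]
is a genuine affine half-space. As in the previous lemma, the whole statement reduces to proving $X\subset S$: once this is known, the characterization of the projection onto a convex set yields $\bar z=P_X(z_Y)=P_XP_Y(z)\in\bap_Y(X)$, which is the assertion.

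First I would record the analogue of the Pythagoras step of \Cref{thm:SingleStep-ConvexHyperplane}, namely $\bap_Y(X)\subset S$, equivalently $\bar z=P_{\bap_Y(X)}(z_Y)$. Using $P_Y(\bar z)=\bar z-d$ from \Cref{fact:bapsets}(iii), and $\scal{s-\bar z}{d}=0$ for $s\in\bap_Y(X)$ (both lie on $\supH_Y(X)$), the inequality $\scal{s-\bar z}{z_Y-\bar z}\le0$ is the same as $\scal{P_Y(s)-P_Y(\bar z)}{P_Y(z)-P_Y(\bar z)}\le 0$, that is, $\bar z-d=P_{\bap_X(Y)}(z_Y)$ should coincide with $P_{\bap_X(Y)}(z)=\bar z-d$ (\Cref{fact:dist-attained}). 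This is where the argument genuinely departs from the hyperplane case: the clean identity $z_{\supH}=z_H+d$ is no longer available, and the inequality must instead be extracted from the firm nonexpansiveness of $P_Y$ together with the fact that $\bap_X(Y)$ is exactly the face of $Y$ exposed by $d$, so that $d\in\Ncone_Y(c)$ for every $c\in\bap_X(Y)$ (this is precisely $\scal{Y-\bap_X(Y)}{d}\le0$ in \Cref{fact:dist-attained}). I expect this identity to be the main obstacle of the whole proof, since the elementary monotonicity of $P_Y$ alone leaves the sign of $\scal{s-\bar z}{z-z_Y}$ undetermined; it is the exposed--face structure that forces it to be nonnegative.

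With $\bap_Y(X)\subset S$ in hand, I would run the contradiction exactly as before. Assuming $X\not\subset S$, pick $w\in X\setminus S$ and form $x\coloneqq tw+(1-t)\bar z$ with $t\in\big(0,\min\{1,\tfrac{\delta}{2\norm{w-x^*}}\}\big]$; the convexity and nonexpansiveness estimates of the previous proof show $x\in\BB_\delta(x^*)\cap X$ and $\scal{x-\bar z}{z_Y-\bar z}=t\scal{w-\bar z}{z_Y-\bar z}>0$, so $x\notin S$. Setting $\hat x\coloneqq P_{S\cap\supH_Y(X)}(x)$, the strict inequality $\scal{x-\hat x}{z_Y-\bar z}>0$ then follows as in \Cref{thm:SingleStep-ConvexHyperplane}.

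Finally I would reproduce the key chain with $z_Y$ in the role of $z_H$. Writing $z_Y-\bar z=\big(P_Y(z)-P_Y(\bar z)\big)-d$ and applying Cauchy--Schwarz and then the nonexpansiveness of $P_Y$ gives
\[
0<\scal{x-\hat x}{z_Y-\bar z}\le \norm{x-\hat x}\,\norm{P_Y(z)-P_Y(\bar z)}-\scal{x-\hat x}{d}\le \norm{x-\hat x}\,\norm{z-\bar z}-\scal{x-\hat x}{d},
\]
and since $\norm{z-\bar z}\le\norm{z-x^*}<r\le\omega\norm d$ this yields $\scal{x-\hat x}{d}<\omega\norm d\,\norm{x-\hat x}$. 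From here the steps involving only the hyperplane $\supH_Y(X)$ are verbatim: with $x_{\supH}\coloneqq P_{\supH_Y(X)}(x)$ one has $\scal{x-\hat x}{d}=\scal{x-x_{\supH}}{d}$ because $\hat x,x_{\supH}\in\supH_Y(X)\perp d$, and $x-x_{\supH}$ is collinear with $d$, so $\scal{x-x_{\supH}}{d}=\norm{x-x_{\supH}}\norm d$. Combining with $\bap_Y(X)\subset S\cap\supH_Y(X)$ one obtains $\dist(x,\supH_Y(X))<\omega\,\dist(x,\bap_Y(X))$ for $x\in\BB_\delta(x^*)\cap X$, contradicting the unilateral BAP error bound \cref{eq:error-bound-cond}. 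Hence $X\subset S$, and the lemma follows.
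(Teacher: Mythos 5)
There is a genuine gap, and it sits at the foundation of your plan: the statement ``$X\subset S$'', to which you reduce the whole lemma, is \emph{false} in general, because for a non-affine $Y$ the point $P_XP_Y(z)$ need not coincide with $\bar z=P_{\bap_Y(X)}(z)$ --- it only lands \emph{somewhere} in $\bap_Y(X)$. Concretely, take $M>1$, $K$ large, and in $\RR^3$ set $Y\coloneqq\{y\mid y_2\le My_1,\ y_3\le 0\}$ and $X\coloneqq\{x\mid x_1\in[0,1],\ x_3\ge 1+K\abs{x_2}\}$. Then $d=(0,0,1)$, $\bap_Y(X)=[0,1]\times\{0\}\times\{1\}$, and the unilateral BAP-EB holds globally with $\omega=(1+K^{-2})^{-1/2}$. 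For $x^*=(0,0,1)$ and $z=(-\varepsilon,\varepsilon,1)$ with $\varepsilon$ small, one gets $\bar z=(0,0,1)$, while $z_Y=P_Y(z)$ has first coordinate $\varepsilon(M-1)/(M^2+1)>0$; hence for $s=(1,0,1)\in\bap_Y(X)\subset X$ one has $\scal{s-\bar z}{z_Y-\bar z}>0$. So neither $\bap_Y(X)\subset S$ nor $X\subset S$ holds, and indeed $P_XP_Y(z)=\bigl(\varepsilon(M-1)/(M^2+1),0,1\bigr)\neq\bar z$, although it does lie in $\bap_Y(X)$ as the lemma asserts. The step you yourself flagged as ``the main obstacle'' --- deducing $\bap_Y(X)\subset S$ from the exposed-face structure of $\bap_X(Y)$ --- is therefore not merely hard but unprovable: firm nonexpansiveness controls $\scal{P_Y(z)-P_Y(\bar z)}{z-\bar z}$ but says nothing about the tangential drift of $P_Y(z)$ along $\supH_X(Y)$, and it is exactly this drift that moves the nearest best point.

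The paper's proof avoids the trap by never identifying $P_XP_Y(z)$ with $P_{\bap_Y(X)}(z)$. It sets $z^\diamond\coloneqq P_{\supH_X(Y)}(P_Y(z))$, checks that $z^\diamond+d\in\BB_r(x^*)$ by nonexpansiveness, applies the already-proved hyperplane case (\cref{thm:SingleStep-ConvexHyperplane}) to the \emph{new} base point $z^\diamond+d$ with $\supH_X(Y)$ in the role of $H$ (thereby obtaining membership of $P_X(z^\diamond)$ in $\bap_Y(X)$ without pinning down which point it is), and finally shows $P_XP_Y(z)=P_X(z^\diamond)$ using that $P_Y(z)-z^\diamond=td$ with $t\le 0$ together with $\scal{X-\bap_Y(X)}{d}\ge 0$ from \cref{fact:dist-attained}. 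To salvage your outline you would have to re-anchor $S$ at the (a priori unknown) point $P_XP_Y(z)$ rather than at $P_{\bap_Y(X)}(z)$, which essentially forces the paper's detour through the optimal supporting hyperplane.
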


\begin{proof}

Consider $x^*$ and $r$ as stated in the assumptions and let $z\in  \BB_{r}(x^{*})$ be arbitrary, but fixed. Now, set $z_{Y}\coloneqq P_{Y}(z)$ and $z^{\diamond} \coloneqq P_{\supH_{X}(Y)}(z_{Y})$, where $\supH_{X}(Y)$ is the optimal supporting hyperplane to $Y$ regarding $X$, \emph{i.e.}, $\supH_{Y}(X) = \supH_{X}(Y) + d$, where $d$ is the displacement vector. Moreover, using  the nonexpansiveness of projection operators onto convex sets and that $x^{*} - d$ lies in both $Y$ and $\supH_{X}(Y)$, we obtain
\begin{align}
\norm{(z^{\diamond} + d) - x^{*}} & = \norm{z^{\diamond} - (x^{*} - d)}\\
               & = \norm{P_{\supH_{X}(Y)}(z_{Y}) - P_{\supH_{X}(Y)}(x^{*} - d)}\\   
               & \leq \norm{z_{Y} - (x^{*} - d)} \\ 
               & =  \norm{P_{Y}(z) - P_{Y}(x^{*})} \\
               & \leq \norm{z - x^{*}} \leq r,  \label{eq:zdiamondinball}
\end{align}
that is, $z^{\diamond} + d\in \BB_{r}(x^{*})$.
Taking into account that $\dist(X,\supH_{X}(Y)) = \dist(X,Y) = \norm{d}$,  \Cref{thm:SingleStep-ConvexHyperplane} can be applied to $z^{\diamond} + d$, with $\supH_{X}(Y)$  playing the role of $H$, yielding $ P_{X}P_{\supH_{X}(Y)}(z^{\diamond}+ d) \in \bap_{\supH_{X}(Y)}(X)$. 
Note that $ \bap_{\supH_{X}(Y)}(X) \subset X\cap \supH_{Y}(X)$ and clearly,  the fact that the error bound constant $\omega$ in \cref{eq:error-bound-cond} is strictly positive implies that $\bap_{Y}(X) \cap \BB_{r}(x^{*}) =(X\cap\supH_{Y}(X))\cap \BB_{r}(x^{*})$. 
Thus, 
\[\label{eq:zdiamondinbap}
P_{X}(z^{\diamond}) \in \bap_{Y}(X),
\]
since  $P_{X}(z^{\diamond}) \in \BB_{r}(x^{*})$. This occurs because $P_{X}(z^{\diamond})= P_{X}P_{\supH_{X}(Y)}(z^{\diamond} + d)$, $x^{*}\in X\cap \supH_{Y}(X)$,  the nonexpansiveness of projections and \cref{eq:zdiamondinball} as 
\begin{align}
\norm{P_{X}(z^{\diamond}) - x^{*}} & = \norm{P_{X}P_{\supH_{X}(Y)}(z^{\diamond} + d) - P_{X}P_{\supH_{X}(Y)}(x^{*})} \\
& \leq  \norm{P_{\supH_{X}(Y)}(z^{\diamond} + d) - P_{\supH_{X}(Y)}(x^{*})} \\
& \leq  \norm{(z^{\diamond} + d) - x^{*}} \\
&\leq r.
\end{align}

Bearing in mind the definition of $\supH_{X}(Y)$ and that $z_{Y}\in Y$, observe that there exists $t\leq 0$ such that  $z_{Y} - z^{\diamond} = td$. For all $x\in X$, we have
\begin{align}
\scal{z_{Y} - P_{X}(z^{\diamond})}{x - P_{X}(z^{\diamond})} & = \scal{z_{Y} - z^{\diamond}}{x - P_{X}(z^{\diamond})} + \scal{z^{\diamond} - P_{X}(z^{\diamond})}{x - P_{X}(z^{\diamond})} \\ 
& = \underbrace{t}_{\leq 0} \underbrace{\scal{d}{x - P_{X}(z^{\diamond})}}_{\geq 0} + \underbrace{\scal{z^{\diamond} - P_{X}(z^{\diamond})}{x - P_{X}(z^{\diamond})}}_{\leq 0} \\ 
& \leq 0,\label{eq:zYiszdiamond}
\end{align}
where the second under-brace remark is by \Cref{fact:dist-attained} and the third is by characterization of the orthogonal projection of $z^{\diamond}$ onto $X$. Hence, \cref{eq:zYiszdiamond} implies that $P_{X}(z_{Y}) = P_{X}(z^{\diamond})$. Therefore, due to \cref{eq:zdiamondinbap} and that  $P_{X}P_{Y}(z) = P_{X}(z_{Y})$, it holds that
\[
P_{X}P_{Y}(z)  \in \bap_{Y}(X),
\]
proving the theorem.
\end{proof}


\subsection{BAP-EB and other regularity conditions}\label{sec:BAP-EB}
We now proceed to discuss the connection of BAP-EB with two other regularity conditions: local linear regularity, which is the keystone for providing linear convergence of MAP~\cite[Corollary 3.14]{Bauschke:1993}, and intrinsic transversality. 

First, we show that unilateral BAP-EB implies the standard local linear regularity and moreover, both coincide in the context of \Cref{thm:SingleStep-ConvexHyperplane}, that is, when one of the sets is a hyperplane. This is proved in \cref{prop:EB-linear-regularity} below.  This subsection ends with a proof that BAP-EB is more general than intrinsic transversality.
For clarity, we present the definitions of linear regularity and intrinsic transversality.

\begin{definition}[{Local linear regularity~\cite[Definition~3.11]{Bauschke:1993}}]\label{def:linear-regularity}
Let $X, Y \subset \RR^{n}$ be  closed convex sets such that $X\cap Y = \emptyset$, assume that the distance between them is attained and suppose that $d$ is the displacement vector.
We say that $X$ and $Y$ are \emph{locally linearly regular} if for some point $x^*\in\bap_{Y}(X) $ there exist  a bound $\kappa > 0$ and a radius $\rho >0$ and such that, for all $z\in \BB_{\rho}(x^{*})$,  
\[\label{eq:linear-regularity}
\dist(z,\bap_{Y}(X)) \leq \kappa  \max \{\dist(z, X), \dist(z, Y+d)\}.
\]
\end{definition}

\begin{proposition}\label{prop:EB-linear-regularity}
Let $X, Y \subset \RR^{n}$ be  closed convex sets, suppose that $X$ and  $Y$ are disjoint with attainable distance and let $d$ be the displacement vector. Then, the unilateral BAP-EB condition from \cref{def:BAP-error-bound} implies local linear regularity. If, in addition, $Y$ is a hyperplane, then local linear regularity is equivalent to the unilateral BAP-EB.
\end{proposition}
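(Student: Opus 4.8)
The plan is to prove the forward implication (unilateral BAP-EB $\Rightarrow$ local linear regularity) in full generality, and then to upgrade it to an equivalence in the hyperplane case by exploiting the identity $\supH_{Y}(X) = Y+d$ already noted before \cref{thm:SingleStep-ConvexHyperplane}. For the forward direction, I would take the point $x^{*}\in\bap_{Y}(X)$ together with $\omega>0$ and $\delta>0$ furnished by unilateral BAP-EB, set $\rho\coloneqq\delta$, and aim for $\kappa\coloneqq 1+2/\omega$. Given $z\in\BB_{\rho}(x^{*})$, the natural move is to transport $z$ into $X$ via $P_{X}(z)$: since $x^{*}\in X$ forces $P_{X}(x^{*})=x^{*}$, nonexpansiveness gives $\norm{P_{X}(z)-x^{*}}\le\norm{z-x^{*}}<\delta$, so $P_{X}(z)\in\BB_{\delta}(x^{*})\cap X$ and the error bound \cref{eq:error-bound} may be applied at $P_{X}(z)$. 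Combining the $1$-Lipschitzness of $w\mapsto\dist(w,\bap_{Y}(X))$ and of $w\mapsto\dist(w,\supH_{Y}(X))$ with $\norm{z-P_{X}(z)}=\dist(z,X)$, I would obtain the chain $\dist(z,\bap_{Y}(X))\le\dist(z,X)+\tfrac{1}{\omega}\dist(P_{X}(z),\supH_{Y}(X))\le(1+\tfrac1\omega)\dist(z,X)+\tfrac1\omega\dist(z,\supH_{Y}(X))$.

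The crux, and what I expect to be the main obstacle, is a purely geometric separation estimate, namely $\dist(z,\supH_{Y}(X))\le\max\{\dist(z,X),\dist(z,Y+d)\}$. To establish it, I would invoke \cref{fact:dist-attained}, which yields $\scal{X-\bap_{Y}(X)}{d}\ge 0$, and, after shifting by $d$ and using $\bap_{Y}(X)=\bap_{X}(Y)+d$, also $\scal{(Y+d)-\bap_{Y}(X)}{d}\le 0$. Thus $\supH_{Y}(X)=\{z\mid\scal{z-\bar x}{d}=0\}$ genuinely separates $X$ and $Y+d$, which lie in the closed half-spaces $\scal{\cdot-\bar x}{d}\ge 0$ and $\scal{\cdot-\bar x}{d}\le 0$ respectively. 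If $z$ lies on the $X$-side, then every $y'\in Y+d$ satisfies $\scal{z-y'}{d}\ge\scal{z-\bar x}{d}\ge 0$, so Cauchy--Schwarz gives $\norm{z-y'}\ge\scal{z-\bar x}{d}/\norm{d}=\dist(z,\supH_{Y}(X))$, whence $\dist(z,Y+d)\ge\dist(z,\supH_{Y}(X))$; the symmetric argument handles the other side using $X$. Feeding this bound into the chain above produces $\dist(z,\bap_{Y}(X))\le(1+\tfrac2\omega)\max\{\dist(z,X),\dist(z,Y+d)\}$, which is exactly \cref{eq:linear-regularity} with the claimed $\kappa$.

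For the reverse implication under the extra hypothesis that $Y$ is a hyperplane, I would first record that then $d$ is orthogonal to $Y$ and $\supH_{Y}(X)=Y+d$, so that the quantity $\dist(x,\supH_{Y}(X))$ appearing in BAP-EB is literally $\dist(x,Y+d)$. Starting from local linear regularity at $x^{*}$ with constants $\kappa,\rho$, the idea is simply to restrict the inequality \cref{eq:linear-regularity} to points $x\in\BB_{\rho}(x^{*})\cap X$; for such $x$ one has $\dist(x,X)=0$, so the maximum collapses and $\dist(x,\bap_{Y}(X))\le\kappa\,\dist(x,Y+d)=\kappa\,\dist(x,\supH_{Y}(X))$. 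This is precisely the unilateral BAP-EB \cref{eq:error-bound} with bound $\omega=1/\kappa$ and radius $\delta=\rho$, closing the equivalence. The only subtlety here is the verification $\supH_{Y}(X)=Y+d$, which follows from \cref{def:optimalhyperplane} together with the fact that projecting onto a hyperplane moves a point along its normal $d$.
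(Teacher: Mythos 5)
Your proposal is correct, and the reverse direction (hyperplane case) is essentially identical to the paper's: restrict \cref{eq:linear-regularity} to $x\in X$, note $\dist(x,X)=0$ and $Y+d=\supH_Y(X)$, and read off $\omega=1/\kappa$, $\delta=\rho$. The forward direction, however, is argued differently. The paper first observes $\dist(x,\supH_Y(X))\le\dist(x,Y+d)$ for $x\in X$, rewrites BAP-EB as $\dist(x,X\cap(Y+d))\le\frac1\omega\dist(x,Y+d)$ on $\BB_\delta(x^*)\cap X$, and then invokes the black-box result \cite[Lemma 4.1]{Bauschke:1993} to extend this one-sided bound to all $z\in\BB_{\delta/2}(x^*)$, obtaining $\kappa=\frac2\omega+1$ with radius $\delta/2$. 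You instead make the extension by hand: transport $z$ to $P_X(z)$, use nonexpansiveness to stay in $\BB_\delta(x^*)\cap X$, apply the $1$-Lipschitzness of the two distance functions, and close the argument with the separation estimate $\dist(z,\supH_Y(X))\le\max\{\dist(z,X),\dist(z,Y+d)\}$, which you correctly derive from \cref{fact:dist-attained} (the paper only uses the restricted version of this estimate on $X$). Your route is self-contained, arrives at the same constant $\kappa=1+\frac2\omega$, and even retains the full radius $\rho=\delta$ rather than $\delta/2$; what it costs is a page of elementary estimates that the citation absorbs. Both are valid proofs of the statement.
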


 \begin{proof}
Assume that the unilateral BAP error bound condition from \cref{def:BAP-error-bound} holds, that is, for some point $x^*\in \bap_Y(X)$ there exist  $\omega > 0$ and   $\delta > 0$ such that, \text{ for all } $x\in \BB_{\delta}(x^{*})\cap X$,
\[
  \omega \dist(x,\bap_{Y}(X))  \leq \dist(x, \supH_{Y}(X)).
\]  
Note that, for all $x\in X$, we have $\dist(x, \supH_{Y}(X)) \leq \dist(x, Y+d)$. From the fact that $\bap_Y(X) = X\cap (Y+d)$, it follows that
 \[
 \dist(x,X\cap (Y+d)) \leq \frac{1}{\omega}\dist(x, Y+d),
\]
 for all $x\in \BB_{\delta}(x^{*})\cap X$.
Using \cite[Lemma 4.1]{Bauschke:1993} in the last inequality, with $X$ playing the role of $M$ and $Y+d$ playing the role of $N$, we get 
\[
\dist(z,X\cap (Y+d)) \leq \left(\frac{2}{\omega} + 1\right)\max\{\dist(z, X), \dist(z, Y+d) \},
\]
for all $z\in \BB_{\frac{\delta}{2}}(x^{*})$. Thus, the local linear regularity holds with $\kappa \coloneqq \frac{2}{\omega} + 1$ and  $\rho \coloneqq \frac{\delta}{2}$. 

To complete the proof, we just have to show that, if $Y$ is a  hyperplane, local linear regularity implies~\cref{eq:error-bound}, as in this case  $Y+d$ coincides with $\supH_{Y}(X)$, the optimal supporting hyperplane to $X$ regarding $Y$. Therefore, for all points  $z\in X$, $\max\{\dist(z, X), \dist(z, Y+d) \} = \dist(z, Y+d) =  \dist(z, \supH_{Y}(X))$. Hence, the result follows, with $\omega \coloneqq \frac{1}{\kappa}$ and $\delta \coloneqq \rho$.
 \end{proof}


Later, in  \cref{ex:lackoferrorbound}, we will see that if none of the sets is a hyperplane,  local linear regularity might not coincide with the unilateral BAP error bound from \cref{def:BAP-error-bound}.  

Next, we introduce a bilateral error bound condition (referred to as BAP-EB), which is more general than the unilateral BAP-EB from \cref{def:BAP-error-bound} and also implies finite convergence of MAP under inconsistency; see \cref{thm:FiniteMAP-Convex-BAPII}. Furthermore, we will see in \cref{cor:FiniteMAP-Convex-BAPII} that, under inconsistency, a given  MAP sequence converges in a finite number of steps if, and only if, the BAP-EB is satisfied along it.

\begin{definition}[BAP error bound]\label{def:BAP-error-bound-II}
  Let $X, Y \subset \RR^{n}$ be non-intersecting  closed convex sets and assume that the distance between them is attained. We say that $X$ and $Y$ satisfy the \emph{BAP error bound (BAP-EB)}   at $x^{*}\in  \bap_Y(X)$ if there exist a bound  $\omega > 0$ and a radius $\delta > 0$ such that, 
  for all $x\in \BB_{\delta}(x^{*})\cap X$, at least one of the following inequalities holds 
  \[\label{eq:BAP-error-bound-II}
  \begin{aligned}   
    \omega \dist(P_{Y}(x),\bap_{X}(Y)) & \leq \dist(P_{Y}(x), \supH_{X}(Y)),
    \\
  \omega \dist(P_{X}P_{Y}(x),\bap_{Y}(X)) &\leq \dist(P_{X}P_{Y}(x), \supH_{Y}(X)).
\end{aligned}
\]
\end{definition}



Before starting to state theorems on finite convergence of MAP and variants, we will compare BAP-EB with intrinsic transversality. Drusvyatskiy presents intrinsic transversality in \cite[Definition 3.1]{Drusvyatskiy:2015}  and comments on its importance. The definition originally considers intersecting sets, and says that two closed sets $X, Y \subset \RR^{n}$ are intrinsically transversal at a common point if there exists an angle $\alpha \in \left( 0,\frac{\pi}{2} \right] $ such that, near this common point, any two points $x \in X \backslash Y$ and $y \in Y \backslash X$ cannot have difference $x-y$ simultaneously making an angle strictly less than $\alpha$ with both the normal cones $-\Ncone_{X}(x)$ and $\Ncone_{Y}(y)$.

The concept of intrinsic transversality was adapted for nonintersecting sets in \cite{Bui:2021}. This nice manuscript  first appeared in the form of a preprint on arXiv a few days after the submission of the present paper.  The authors of \cite{Bui:2021} have similar results to ours as they derived finite convergence of MAP under inconsistency and intrinsic transversality. We will actually prove that intrinsic transversality for nonintersecting convex sets implies BAP-EB. However, the converse is not true. \cref{ex:BAP-EB-Intrinsic} shows that BAP-EB is more general. More than that, BAP-EB along a MAP sequence is a necessary and sufficient condition for its finite convergence under infeasibility; see \cref{cor:FiniteMAP-Convex-BAPII}. 
Next, we provide the definition of intrinsic transversality for two nonintersecting closed convex sets introduced in \cite[Condition 1']{Bui:2021}.

\begin{definition}[Intrinsic transversality for disjoint sets] \label{def:intr-trans}
Two closed convex sets $X, Y \subset \RR^{n}$ with empty intersection and attainable distance are \emph{intrinsically transversal} at $(x^{*},y^{*})\in\bap(X,Y)$ if there exist $\kappa\in(0,1)$ and $\delta>0$ such that  
\[\label{eq:intr-trans}
  \max \left\{\dist\left(\frac{x-y}{\norm{x-y}}, \Ncone_{Y}(y)\right), \dist\left(\frac{x-y}{\norm{x-y}},-\Ncone_{X}(x)\right)\right\} \geq \kappa,
\]
for all $x \in X \cap\BB_\delta(x^{*})$ and $y \in \big(Y \backslash \bap_{X}(Y)\big)\cap \BB_\delta(y^{*})$.  
\end{definition}

We point out that the previous definition is not symmetrical. It is unilateral, since one is considering local points $y\in Y$ such that $\dist(X,y)>\dist(X,Y)$, but this condition is not required for  points in $X$.

We now establish that intrinsic transversality is a sufficient condition for BAP-EB to be fulfilled.

\begin{proposition}[Intrinsic transversality implies BAP-EB]\label{PropIntTrans-EB2}
If intrinsic transversality from \cref{def:intr-trans} is satisfied, then BAP-EB from \cref{def:BAP-error-bound-II} holds. 
  \end{proposition}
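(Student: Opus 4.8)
The plan is to show that intrinsic transversality forces a much stronger local conclusion than one might first expect, namely that $P_{Y}(x)$ already lands inside $\bap_{X}(Y)$ for every $x\in X$ sufficiently close to $x^{*}$. Once this is in hand, the first inequality of \cref{eq:BAP-error-bound-II} holds trivially: its left-hand side $\omega\dist(P_{Y}(x),\bap_{X}(Y))$ vanishes, so BAP-EB holds at $x^{*}$ with \emph{any} $\omega>0$ and the $\delta$ coming from the claim. Thus the whole proposition reduces to establishing: there is $\delta>0$ such that $P_{Y}(x)\in\bap_{X}(Y)$ for all $x\in\BB_{\delta}(x^{*})\cap X$. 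Note that this route uses only one of the two alternatives in \cref{def:BAP-error-bound-II}; the genuinely bilateral content of BAP-EB is what makes it strictly more general than intrinsic transversality, not what is needed for the present implication.

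I would prove the reduced claim by contradiction. If it fails, I can select a sequence $x_{n}\to x^{*}$ in $X$ with $y_{n}\coloneqq P_{Y}(x_{n})\notin\bap_{X}(Y)$. Put $x_{n}'\coloneqq P_{X}P_{Y}(x_{n})=P_{X}(y_{n})$ and $u_{n}\coloneqq\frac{x_{n}'-y_{n}}{\norm{x_{n}'-y_{n}}}$, which is well defined since $\norm{x_{n}'-y_{n}}=\dist(y_{n},X)\ge\dist(X,Y)=\norm{d}>0$. Using nonexpansiveness (hence continuity) of the projections together with $P_{Y}(x^{*})=x^{*}-d=y^{*}$ and $P_{X}(y^{*})=x^{*}$ — the latter because $-d\in\Ncone_{X}(x^{*})$ by \cref{fact:dist-attained} — I obtain $y_{n}\to y^{*}$ and $x_{n}'\to x^{*}$. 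Therefore, for $n$ large, the pair $(x_{n}',y_{n})$ lies in the neighborhoods demanded by \cref{def:intr-trans}, and $y_{n}\in Y\setminus\bap_{X}(Y)$ makes it an admissible test pair there.

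The reason for choosing this particular pair is that both normal-cone distances in \cref{eq:intr-trans} become small. First, since $x_{n}'=P_{X}(y_{n})$, the projection characterization gives $y_{n}-x_{n}'\in\Ncone_{X}(x_{n}')$, i.e.\ $u_{n}\in-\Ncone_{X}(x_{n}')$, whence $\dist(u_{n},-\Ncone_{X}(x_{n}'))=0$. Second, since $y_{n}=P_{Y}(x_{n})$ we likewise have $x_{n}-y_{n}\in\Ncone_{Y}(y_{n})$, so the unit vector $\frac{x_{n}-y_{n}}{\norm{x_{n}-y_{n}}}$ is a genuine element of the cone $\Ncone_{Y}(y_{n})$; because $x_{n}-y_{n}\to d$ and $x_{n}'-y_{n}\to d$, both $u_{n}$ and $\frac{x_{n}-y_{n}}{\norm{x_{n}-y_{n}}}$ converge to $d/\norm{d}$, giving $\dist(u_{n},\Ncone_{Y}(y_{n}))\le\bigl\Vert u_{n}-\frac{x_{n}-y_{n}}{\norm{x_{n}-y_{n}}}\bigr\Vert\to0$. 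Consequently the maximum in \cref{eq:intr-trans} tends to $0$, contradicting the uniform lower bound $\kappa>0$ for $n$ large. This proves the claim, and with it the proposition.

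The only delicate step is the second estimate, i.e.\ controlling $\dist(u_{n},\Ncone_{Y}(y_{n}))$, and I expect it to be the main (though still mild) obstacle. It hinges on the convexity fact that the projection residual $x_{n}-y_{n}$ lies in $\Ncone_{Y}(y_{n})$, and on the observation that the two normalized residuals share the common limit $d/\norm{d}$ — this is precisely what lets me compare $u_{n}$, built from $x_{n}'$, against a bona fide element of the \emph{moving} cone $\Ncone_{Y}(y_{n})$ attached to $x_{n}$. Everything else is bookkeeping with the projection characterizations and the properties of the displacement vector already recorded in \cref{fact:bapsets,fact:dist-attained}.
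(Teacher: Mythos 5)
Your proof is correct, and its computational core coincides with the paper's: both arguments test intrinsic transversality on the pair $\bigl(P_XP_Y(x_n),P_Y(x_n)\bigr)$, observe that the normalized residual lies exactly in $-\Ncone_X\bigl(P_XP_Y(x_n)\bigr)$, and bound its distance to $\Ncone_Y\bigl(P_Y(x_n)\bigr)$ by comparing with the genuine cone element $\tfrac{x_n-y_n}{\norm{x_n-y_n}}$, both normalized residuals converging to $d/\norm{d}$. The packaging differs: the paper argues by contraposition, negating \emph{both} inequalities of \cref{eq:BAP-error-bound-II} to manufacture its sequence (and in particular records that $w^k\notin\bap_Y(X)$, a property that its own verification of \cref{eq:intr-trans} never actually uses, since \cref{def:intr-trans} only restricts the $Y$-point). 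You instead give a direct proof that isolates a strictly stronger local fact — intrinsic transversality forces $P_Y(x)\in\bap_X(Y)$ for every $x\in X$ near $x^{*}$, so the first alternative in \cref{eq:BAP-error-bound-II} holds with vanishing left-hand side and arbitrary $\omega$. This buys a cleaner logical structure, makes explicit that only one of the two BAP-EB alternatives is ever needed for this implication, and recovers as a by-product the single-step/finite-convergence flavor of the intrinsic-transversality results of Bui et al.\ cited in the paper. All the supporting details you invoke ($P_Y(x^{*})=x^{*}-d$, $P_X(y^{*})=x^{*}$ via $-d\in\Ncone_X(x^{*})$, and $\norm{x_n'-y_n}\ge\norm{d}>0$) check out against \cref{fact:bapsets,fact:dist-attained}.
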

  
\begin{proof}
 We are going to prove the statement by  showing that the absence of BAP-EB prevents intrinsic transversality to hold.  

  Consider two closed and convex sets  $X,Y\in \RR^n$ with empty intersection and attainable distance. Let $(x^{*},y^{*})\in \bap(X,Y)$ and suppose that BAP-EB does not hold at $x^{*}$. The lack of BAP-EB guarantees the existence of a sequence $(x^k)_{k\in\NN}\subset X$ converging to $x^*$ such that the sequence defined by $y^{k} \coloneqq P_{Y}(x^{k})$  has no term in $\bap_{X}(Y)$ and  converges to $y^{*}$, and the sequence determined by  $w^k \coloneqq P_{X} P_{Y}(x^{k})$ does not have any term in   $ \bap_Y(X)$ and converges to $x^{*}$. The existence of such sequence $(x^k)_{k\in\NN}$ is indeed easy to verify when denying both inequalities \cref{eq:BAP-error-bound-II} in BAP-EB.

  Obviously, $x^{k} - y^{k}\in \Ncone_Y(y^{k})$ and   $w^{k} - y^{k} \in -\Ncone_X(w^{k})$.   Taking into account the nonexpansiveness of projections, we have $x^{k} - y^{k} \to x^{*} - y^{*}$ and $w^{k} - y^{k} \to x^{*} - y^{*}$.

 Hence,
  \begin{align}
  \lim_{k\to \infty}\dist\left(\frac{w^{k}-y^{k}}{\norm{w^{k}-y^{k}}}, \Ncone_{Y}(y^{k})\right) & \leq  \lim_{k\to \infty} \norm{\frac{w^{k}-y^{k}}{\norm{w^{k}-y^{k}}} -\frac{x^{k}-y^{k}}{\norm{x^{k}-y^{k}}} } \\
   & = \norm{\frac{x^{*}-y^{*}}{\norm{x^{*}-y^{*}}} -\frac{x^{*}-y^{*}}{\norm{x^{*}-y^{*}}} } = 0,
\end{align}
and
\begin{align}
  \lim_{k\to \infty}\dist\left(\frac{w^{k}-y^{k}}{\norm{w^{k}-y^{k}}}, - \Ncone_{X}(w^{k})\right) & \leq  \lim_{k\to \infty} \norm{\frac{w^{k}-y^{k}}{\norm{w^{k}-y^{k}}} -\frac{w^{k}-y^{k}}{\norm{w^{k}-y^{k}}} } = 0, 
\end{align}
which invalids  \cref{eq:intr-trans}, \emph{i.e.}, intrinsic transversality fails. 
\end{proof}

\section{Finite convergence of projection-based methods}\label{sec:finitemethods}
\subsection{Finite convergence of MAP}\label{sec:finiteconvmap}

In this subsection we present straightforward consequences of \cref{thm:SingleStepMAP-PolyHyp,thm:SingleStep-ConvexHyperplane,thm:SingleStep-Convex} on MAP. 
The first theorem establishes finite convergence of MAP for a polyhedron and a disjoint hyperplane. We then show that under local linear regularity, MAP for a closed convex set and a disjoint hyperplane also converges in a finite number of steps, as for a hyperplane versus a closed convex set, local linear regularity coincides with BAP-EB; recall \cref{prop:EB-linear-regularity}. Then, we see in \cref{thm:FiniteMAP-Convex} that infeasibility combined with unilateral BAP-EB  implies finite convergence of alternating projections for two closed convex sets. 
Finally, in \cref{thm:FiniteMAP-Convex-BAPII}, we prove the most important result of this work, namely,  that BAP-EB from \cref{def:BAP-error-bound-II} under inconsistency also yields finite convergence of MAP.

\begin{theorem}[Finite convergence of MAP for polyhedron versus hyperplane under inconsistency]\label{thm:FiniteMAP-poly}
  Consider two nonempty sets $\Omega,H\subset \RR^{n}$ such that $\Omega$ is a polyhedron, $H$ is a hyperplane and $\Omega\cap H = \emptyset$.  Let $x^{0}\in\RR^n$ be given and    $(x^{k})_{k\in\NN}$ be the MAP sequence defined by $x^{k+1} \coloneqq P_{\Omega}P_H(x^{k})$.  Then, $(x^{k})_{k\in\NN}$ converges in finitely many steps to $\bar x \in \bap_{H}(\Omega) $. 
\end{theorem}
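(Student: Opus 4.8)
The plan is to combine three ingredients already at our disposal: the attainability of the distance for two disjoint polyhedra, the global convergence of MAP to a best point, and the single-step Lemma~\cref{thm:SingleStepMAP-PolyHyp}. The whole argument is a short chase once these are lined up, so the proof is essentially a corollary of~\cref{thm:SingleStepMAP-PolyHyp}.

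First I would observe that a hyperplane is a polyhedron, so $\Omega$ and $H$ are two nonempty disjoint polyhedra. By \cref{fact:polyMAP}, the distance $\dist(\Omega,H)$ is attained, whence by \cref{conv-MAP}(i) the MAP sequence converges: $x^{k}\to\bar x$ for some $\bar x\in\bap_{H}(\Omega)$. In particular $\dist(x^{k},\bap_{H}(\Omega))\to 0$ as $k\to\infty$.

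Next I would invoke \cref{thm:SingleStepMAP-PolyHyp} to obtain the radius $r>0$ such that $P_{\Omega}P_{H}(z)\in\bap_{H}(\Omega)$ whenever $\dist(z,\bap_{H}(\Omega))\leq r$. Since $\dist(x^{k},\bap_{H}(\Omega))\to 0$, there exists a nonnegative integer $\bar k$ with $\dist(x^{\bar k},\bap_{H}(\Omega))\leq r$. Applying the lemma to $z=x^{\bar k}$ then gives $x^{\bar k+1}=P_{\Omega}P_{H}(x^{\bar k})\in\bap_{H}(\Omega)$; that is, MAP reaches a best point in finitely many steps.

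Finally I would use \cref{fact:bapsets}(i), which identifies $\bap_{H}(\Omega)=\Fix(P_{\Omega}P_{H})$. Because $x^{\bar k+1}\in\bap_{H}(\Omega)$ is a fixed point of $P_{\Omega}P_{H}$, every subsequent iterate satisfies $x^{k}=x^{\bar k+1}$ for all $k\geq \bar k+1$, so the sequence is constant from index $\bar k+1$ on and the limit $\bar x$ equals $x^{\bar k+1}\in\bap_{H}(\Omega)$. This establishes finite convergence to $\bar x\in\bap_{H}(\Omega)$. There is no real obstacle here beyond correctly matching the convergence guarantee (which only yields $\dist(x^{k},\bap_{H}(\Omega))\to 0$) with the quantitative radius $r$ of the single-step lemma; once an iterate enters the $r$-neighborhood, one more alternating step lands exactly in $\bap_{H}(\Omega)$, and the fixed-point property freezes the sequence.
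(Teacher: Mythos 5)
Your proposal is correct and follows essentially the same route as the paper's proof: attainability of the distance via \cref{fact:polyMAP}, global convergence via \cref{conv-MAP}(i), and then the single-step \cref{thm:SingleStepMAP-PolyHyp} once an iterate enters the $r$-neighborhood. Your extra appeal to \cref{fact:bapsets}(i) to freeze the sequence and identify $x^{\bar k+1}$ with the limit $\bar x$ is a small clarification that the paper leaves implicit, but it does not change the argument.
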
 

\begin{proof}
 \Cref{fact:polyMAP}  provides that $\dist(\Omega,H)$ is attained, as $H$ is also a polyhedron. Hence, the sequence $(x^{k})_{k\in\NN}$ converges to some $\bar x\in \bap_{H}(\Omega)$ (\Cref{conv-MAP}(i)).
Hence,  for a sufficiently large $\bar k$, $\dist(x^{\bar k},\bap_{H}(\Omega))\leq r$, where $r$ set as in   \Cref{thm:SingleStepMAP-PolyHyp}. Therefore, by  \Cref{thm:SingleStepMAP-PolyHyp}, $x^{\bar k + 1} = P_{\Omega}P_H(x^{\bar k}) = \bar x$, hence proved. 
\end{proof}

\cref{thm:FiniteMAP-poly} alone encompasses a very relevant setting featuring non-isolated solutions as it concerns  a hyperplane versus a polyhedron (see \cref{ex:MAP-LP}  for an application on linear programming).  Below, we will state results for non-intersecting convex sets  under BAP-EB conditions beyond the polyhedral-affine setting. 

\begin{theorem}[Finite convergence of MAP for a convex set versus hyperplane under inconsistency]
\label{cor:FiniteMAP-ConvexHyperplane}
Let $X, H \subset \RR^{n}$ be  a closed convex set and a hyperplane, respectively, and suppose that $X$ and  $H$ are disjoint with attainable distance. Assume that $X$ and $H$ satisfy the BAP error bound \cref{eq:error-bound} from \Cref{def:BAP-error-bound} at  $x^{*}\in \bap_H(X)$
 with bounds  $\omega > 0$ and  radius $\delta > 0$.
By setting $r \coloneqq \min\left\{\omega \dist(X,H),\frac{\delta}{2}\right\}$, we have that  for any given $x^{0}\in\RR^{n}$, the MAP sequence $(x^k)_{k\in\NN}$ defined by $x^{k+1} \coloneqq P_{X}P_{H}(x^{k})$ converges to a point $\bar x\in \bap_{H}(X)$. Moreover, if  there exists an index $\bar  k \geq 0$, such that $x^{\bar  k} \in    \BB_{r}(x^{*})$, then $x^{\bar k + 1} = \bar x$, that is, in this case, MAP converges in at most $\bar k + 1$ steps.

\end{theorem}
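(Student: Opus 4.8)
The plan is to read this as a direct corollary of the single-step \cref{thm:SingleStep-ConvexHyperplane}, glued together with the global convergence of MAP and the fixed-point characterization of $\bap_H(X)$. The argument has three parts: establish convergence to some $\bar x \in \bap_H(X)$, use the single-step lemma to show one good iterate lands in $\bap_H(X)$, and then upgrade that single landing to permanent termination.

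First I would dispatch the convergence claim. Since $X$ and $H$ are disjoint closed convex sets with attainable distance, \cref{conv-MAP}(i) applies verbatim and yields that the MAP sequence $(x^k)_{k\in\NN}$ converges to some $\bar x \in \bap_H(X)$ (with $P_H(x^k) \to \bar x - d$). Note also that $r = \min\{\omega\dist(X,H),\frac{\delta}{2}\} > 0$, because $\dist(X,H)>0$ follows from $X\cap H = \emptyset$ together with closedness and attainability, and $\omega,\delta>0$; hence $\BB_r(x^*)$ is a genuine neighborhood of $x^*$.

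Next, suppose there is an index $\bar k \geq 0$ with $x^{\bar k} \in \BB_r(x^*)$. Since $r$ is precisely the radius produced in \cref{thm:SingleStep-ConvexHyperplane}, I would apply that lemma to the point $z = x^{\bar k}$ to conclude immediately that $x^{\bar k+1} = P_X P_H(x^{\bar k}) \in \bap_H(X)$. The final step converts this into finite termination: by \cref{fact:bapsets}(i) we have $\bap_H(X) = \Fix(P_X P_H)$, so $x^{\bar k+1}$ is a fixed point of the alternating projection operator. Consequently $x^{\bar k+2} = P_X P_H(x^{\bar k+1}) = x^{\bar k+1}$, and inductively the sequence is stationary from index $\bar k+1$ onward. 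Because the whole sequence already converges to $\bar x$, the stationary tail forces $x^{\bar k+1} = \bar x$, giving convergence in at most $\bar k + 1$ steps.

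I do not anticipate a genuine obstacle, since the theorem is essentially a repackaging of \cref{thm:SingleStep-ConvexHyperplane}. The only point that needs care is the last identification: the single-step lemma only guarantees membership in $\bap_H(X)$, which is a set of best approximation points and a priori need not equal the specific limit $\bar x$; recognizing via \cref{fact:bapsets}(i) that $\bap_H(X)$ is exactly $\Fix(P_X P_H)$ is what pins the reached point down as a fixed point, makes the tail stationary, and thereby forces equality with $\bar x$.
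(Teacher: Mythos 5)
Your proposal is correct and follows essentially the same route as the paper, which simply invokes \cref{conv-MAP}(i) for the convergence statement and \cref{thm:SingleStep-ConvexHyperplane} for the finite-termination part. The only difference is that you spell out the final identification of the reached point with the limit $\bar x$ via $\bap_H(X)=\Fix(P_XP_H)$, a detail the paper leaves implicit.
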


\begin{proof}
 It is well-known that MAP converges globally for two closed convex sets with attainable distance (see \Cref{conv-MAP}(i)) and thus, the first part of item (ii) follows. Its second part is a straightforward consequence of \Cref{thm:SingleStep-ConvexHyperplane}. 
\end{proof}

\begin{theorem}[Finite convergence of MAP under infeasibility and unilateral BAP-EB]\label{thm:FiniteMAP-Convex}
Let $X, Y \subset \RR^{n}$ be  closed convex sets such that $X\cap Y = \emptyset$ and assume that the distance between them is attained. 
Assume that $X$ and $Y$ satisfy the unilateral BAP error bound of \Cref{def:BAP-error-bound} at  $x^{*}\in \bap_Y(X)$ with bound $\omega > 0$ and radius $\delta > 0$. 
Set $r \coloneqq \min\left\{\omega \dist(X,Y),\frac{\delta}{2}\right\}$, and then we have that for any given $x^{0}\in\RR^{n}$, the MAP sequence $(x^k)_{k\in\NN}$ defined by $x^{k+1} \coloneqq P_{X}P_{Y}(x^{k})$ converges to a point $\bar x\in \bap_{Y}(X)$. Moreover, if  there exists an index $\bar k \geq 0$, such that $x^{\bar k} \in    \BB_{r}(x^{*})$, then $x^{\bar k + 1} = \bar x$, that is, in this case, MAP converges in at most $\bar k + 1$ steps.

\end{theorem}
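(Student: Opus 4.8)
The plan is to mirror the proof of \Cref{cor:FiniteMAP-ConvexHyperplane}, combining the global convergence of MAP for two disjoint closed convex sets with attainable distance (\Cref{conv-MAP}(i)) with the single-step result \Cref{thm:SingleStep-Convex}. All the analytic work has already been done in the latter; what remains is to thread it through the convergent MAP sequence.

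First I would settle the convergence claim: since $X\cap Y=\emptyset$ and $\dist(X,Y)$ is attained, \Cref{conv-MAP}(i) guarantees that, for any starting point $x^0\in\RR^n$, the MAP sequence $(x^k)_{k\in\NN}$ converges to some $\bar x\in\bap_Y(X)$ (with $P_Y(x^k)\to\bar x-d\in\bap_X(Y)$). For the finite-termination claim, suppose there is an index $\bar k\geq 0$ with $x^{\bar k}\in\BB_r(x^*)$. Because the hypotheses of \Cref{thm:SingleStep-Convex} are exactly those assumed here, and with the identical value $r=\min\{\omega\dist(X,Y),\frac{\delta}{2}\}$, I can take $z=x^{\bar k}$ in that lemma to obtain $x^{\bar k+1}=P_XP_Y(x^{\bar k})\in\bap_Y(X)$.

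It then remains to upgrade the membership $x^{\bar k+1}\in\bap_Y(X)$ to the equality $x^{\bar k+1}=\bar x$. Here I would invoke the fixed-point characterization $\bap_Y(X)=\Fix(P_XP_Y)$ from \Cref{fact:bapsets}(i): since $x^{\bar k+1}$ is a fixed point of $P_XP_Y$, applying the operator repeatedly leaves it unchanged, so the sequence is constant from index $\bar k+1$ onward, that is, $x^m=x^{\bar k+1}$ for every $m\geq\bar k+1$. As $(x^k)_{k\in\NN}$ already converges to $\bar x$, the limit of this eventually-constant tail forces $x^{\bar k+1}=\bar x$, which yields convergence in at most $\bar k+1$ steps.

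There is no real obstacle at this level: essentially all the difficulty is absorbed into \Cref{thm:SingleStep-Convex}, and the only genuinely new observation is the elementary fixed-point step identifying the terminal iterate with the limit $\bar x$. The proof is therefore a short corollary of the single-step lemma together with the classical global convergence of MAP.
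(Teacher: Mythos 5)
Your proposal is correct and follows exactly the paper's own route: the paper's proof is the one-line observation that the result follows from \Cref{conv-MAP}(i) together with \Cref{thm:SingleStep-Convex}, and you have simply spelled out the details, including the (implicit) fixed-point step via \Cref{fact:bapsets}(i) identifying $x^{\bar k+1}$ with the limit $\bar x$. Nothing further is needed.
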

\begin{proof}
The result  follows directly from \Cref{conv-MAP}(i)  and \Cref{thm:SingleStep-Convex}.
\end{proof}

In order to prove finite convergence of MAP under the bilateral BAP-EB from \Cref{def:BAP-error-bound-II}, we need the following auxiliary proposition.

\begin{proposition}\label{prop:BAP-conv}
  Let $X, Y \subset \RR^{n}$ be  closed convex sets such that $X\cap Y = \emptyset$, assume that the distance   between them is attained.  Consider a sequence  $(z^k)_{k\in\NN}\subset X$ converging to a point $z^{*}\in \bap_Y(X)$. Assume there exists $\omega>0$ such that, for all $k$, 
\[\label{eq:prop-BAPconv1}
\omega \dist(z^{k},\bap_{Y}(X)) \leq \dist(z^{k}, \supH_{Y}(X)).
\]  
Then, 
\[\label{eq:prop-BAPconv2}  
  \omega \dist(z,\bap_{Y}(X)) \leq \dist(z, \supH_{Y}(X))
  \]  
  holds for all $z \in \overline S \coloneqq  \closu\left( \conv \left ( (z^k)_{k\in\NN} \cup \bap_{Y}(X)  \right )  \right)$. Moreover, $\dist(\overline{S},Y) =  \dist(X,Y)$,  $\bap_{Y}(\overline{S}) = \bap_{Y}(X) $ and  $\supH_{Y}(\overline{S}) = \supH_{Y}(X) $. 
\end{proposition}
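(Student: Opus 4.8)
The plan is to reduce the target inequality \cref{eq:prop-BAPconv2} to a single convexity statement and then invoke the fact that a convex function that is nonpositive on a family of points is nonpositive on their closed convex hull. First I would fix the reference point $z^{*}\in\bap_{Y}(X)$ and record, via \Cref{def:optimalhyperplane}, that $\supH_{Y}(X)=\{z\mid \scal{z-z^{*}}{d}=0\}$, so that $\dist(z,\supH_{Y}(X))=\abs{\scal{z-z^{*}}{d}}/\norm{d}$ for every $z$. The crucial preliminary step is to show that $\overline{S}$ lies entirely on the nonnegative side of this hyperplane, namely $\overline{S}\subset H^{+}\coloneqq\{z\mid \scal{z-z^{*}}{d}\ge 0\}$. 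This follows because every generator of $\overline{S}$ is either some $z^{k}\in X$, for which \Cref{fact:dist-attained} yields $\scal{z^{k}-z^{*}}{d}\ge 0$, or a point of $\bap_{Y}(X)\subset\supH_{Y}(X)$, for which the inner product is zero; since $H^{+}$ is closed and convex, it contains the closed convex hull $\overline{S}$.

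The key observation is that, once we know $\overline{S}\subset H^{+}$, the distance to the supporting hyperplane becomes affine there: $\dist(z,\supH_{Y}(X))=\scal{z-z^{*}}{d}/\norm{d}$. Consequently the function $g(z)\coloneqq\omega\,\dist(z,\bap_{Y}(X))-\scal{z-z^{*}}{d}/\norm{d}$ is convex on all of $\RR^{n}$, being the convex map $z\mapsto\dist(z,\bap_{Y}(X))$ minus an affine term, and it coincides with $\omega\,\dist(z,\bap_{Y}(X))-\dist(z,\supH_{Y}(X))$ throughout $H^{+}\supset\overline{S}$. By hypothesis \cref{eq:prop-BAPconv1} we have $g(z^{k})\le 0$ for every $k$, and $g$ vanishes identically on $\bap_{Y}(X)$ (both summands are zero there). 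Convexity then forces $g\le 0$ on $\conv\!\big((z^{k})_{k\in\NN}\cup\bap_{Y}(X)\big)$, and continuity of $g$ extends this to the closure $\overline{S}$; since $\overline{S}\subset H^{+}$ this is precisely \cref{eq:prop-BAPconv2}.

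For the three ``moreover'' claims I would argue as follows. Because $X$ is closed and convex and contains both $(z^{k})_{k\in\NN}$ and $\bap_{Y}(X)$, we get $\overline{S}\subset X$, hence $\dist(X,Y)\le\dist(\overline{S},Y)$; on the other hand $z^{*}\in\bap_{Y}(X)\subset\overline{S}$ attains $\dist(X,Y)$, giving the reverse inequality and thus $\dist(\overline{S},Y)=\dist(X,Y)$. The equality $\bap_{Y}(\overline{S})=\bap_{Y}(X)$ then falls out by double inclusion, since both are the sets of points at distance $\dist(X,Y)$ from $Y$, and $\bap_{Y}(X)\subset\overline{S}\subset X$. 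Finally, applying \Cref{fact:bapsets}(iii) to the pair $(\overline{S},Y)$ at a common best point $\bar s\in\bap_{Y}(\overline{S})=\bap_{Y}(X)$ forces the displacement vector of $(\overline{S},Y)$ to equal $d$, and substituting $d$ into \Cref{def:optimalhyperplane} yields $\supH_{Y}(\overline{S})=\supH_{Y}(X)$.

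The step I expect to be the main obstacle --- or at least the one that must be pinned down carefully --- is the half-space containment $\overline{S}\subset H^{+}$. It is exactly what upgrades $z\mapsto\dist(z,\supH_{Y}(X))$ from a merely convex function $\abs{\scal{\cdot-z^{*}}{d}}/\norm{d}$ to an \emph{affine} one on $\overline{S}$; without it, $g$ would be a difference of two convex functions and the whole convexity argument collapses. Everything else is routine bookkeeping of distances together with \Cref{fact:dist-attained,fact:bapsets} and \Cref{def:optimalhyperplane}.
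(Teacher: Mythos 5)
Your proposal is correct, and at bottom it is the same estimate as the paper's, but organized more efficiently. The paper proves \cref{eq:prop-BAPconv2} by an explicit induction over the finite convex hulls $S_k=\conv\left(\{z^0,\dots,z^k\}\cup\bap_Y(X)\right)$: along each segment it verifies by hand that $\dist(\cdot,\supH_{Y}(X))$ is \emph{affine} (because the residuals $u-P_{\supH_{Y}(X)}(u)$ are nonnegative multiples of $d$ for $u\in X$, which is exactly the sign information $\scal{X-\bap_Y(X)}{d}\ge 0$ from \Cref{fact:dist-attained} that you use for the half-space containment $\overline S\subset H^{+}$) and that $\dist(\cdot,\bap_Y(X))$ is convex (by comparing against the convex combination of the individual projections), then combines the two and passes to the closure by continuity, just as you do. Your version isolates these two ingredients once and for all: on $H^{+}$ the hyperplane distance is the affine function $z\mapsto\scal{z-z^{*}}{d}/\norm{d}$, so $g=\omega\dist(\cdot,\bap_Y(X))-\dist(\cdot,\supH_{Y}(X))$ is convex on $\overline S$, and Jensen's inequality on finite convex combinations replaces the induction. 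This buys brevity and a reusable principle (a convex function nonpositive on a generating set is nonpositive on its closed convex hull); it loses nothing, and your identification of $\overline S\subset H^{+}$ as the step that makes the right-hand side affine rather than merely convex is precisely the crux of the paper's computation as well. Your handling of the three \emph{moreover} claims matches the paper's sandwich argument via $\bap_Y(X)\subset\overline S\subset X$; the paper asserts $\supH_{Y}(\overline S)=\supH_{Y}(X)$ without comment, whereas your appeal to \Cref{fact:bapsets}(iii) to identify the displacement vector of the pair $(\overline S,Y)$ with $d$ is the right way to justify it.
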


\begin{proof}
  We start proving that the convex hull $S \coloneqq  \conv \left ( (z^k)_{k\in\NN} \cup \bap_{Y}(X)  \right ) $ satisfies the announced error bound \cref{eq:prop-BAPconv2}. This will be done by induction over the sets 
  \[
    S_k \coloneqq \conv \left( \left\{ z^{0},  z^{1},  \ldots, z^{k} \right \} \cup \bap_{Y}(X)\right). \] 
Note that $S_{0} = \{z_\lambda\in X \mid z_\lambda = \lambda z^{0} + (1-\lambda)v, \; \lambda \in [0,1], v\in \bap_{Y}(X) \} $. Taking arbitrary but fixed $v\in \bap_{Y}(X)$ and $\lambda \in [0,1]$, we define accordingly $z_{\lambda}\in S_0$ and denote  by $\bar z_\lambda \coloneqq P_{\bap_{Y}(X)}(z_\lambda)$and   by $\bar z^{0} \coloneqq P_{\bap_{Y}(X)}(z^{0})$. Note that $\hat z_\lambda \coloneqq \lambda \bar z^{0} + (1-\lambda)v \in \bap_{Y}(X)$, so we get
\begin{align}   
  \omega \dist(z_{\lambda },\bap_{Y}(X)) & = \omega \norm{z_{\lambda } - \bar z_{\lambda }}  
  \\
  & \leq \omega \norm{z_{\lambda } - \hat z_{\lambda}}  
  \\
    & = \omega \norm{ \lambda z^{0} + (1-\lambda)v - (\lambda\bar z^{0} + (1-\lambda)v) }  
  \\
  & =  \omega \norm{\lambda (z^{0} - \bar z^{0})} \\  
  & =  \lambda \omega \dist\left(z^{0}, \bap_{Y}(X)  \right).  \label{eq:prop-BAPconv3}  
  \end{align}
  Define now $\tilde z_\lambda \coloneqq P_{\supH_{Y}(X)}(z_\lambda)$. Since  $\supH_{Y}(X)$ is a hyperplane we get \[\tilde z_\lambda  = 
  P_{\supH_{Y}(X)}( \lambda z^{0} + (1-\lambda)v) = \lambda  P_{\supH_{Y}(X)}( z^{0})  + (1-\lambda)P_{\supH_{Y}(X)}(v) = 
  \lambda \tilde z^{0} + (1-\lambda)v  ,\] using in the last equality that  $\tilde z^{0}\coloneqq P_{\supH_{Y}(X)}( z^{0}) $ and $v\in \bap_{Y}(X) \subset \supH_{Y}(X)$. Then,
\begin{align}   
     \dist(z_{\lambda },\supH_{Y}(X)) & =  \norm{z_{\lambda } - \tilde z_\lambda}  
    \\
    & =  \norm{(\lambda z^{0} + (1-\lambda)v) - (\lambda \tilde z^{0} + (1-\lambda)v ) }  
    \\
    & =   \norm{\lambda (z^{0} - \tilde z^{0})} \\  
    & =   \lambda \dist\left(z^{0}, \supH_{Y}(X)  \right)) . \label{eq:prop-BAPconv4}
    \end{align}
   The  hypothesis  \cref{eq:prop-BAPconv1} for $k=0$ reads as 
\[
  \omega  \dist\left(z^{0}, \bap_{Y}(X)  \right) \leq  \dist\left(z^{0}, \supH_{Y}(X)  \right).
\]
Multiplying this inequality  by $\lambda \in [0,1]$ and combining it with equalities \cref{eq:prop-BAPconv3,eq:prop-BAPconv4}, gives us the first step of the induction.
 
Now, note that \[S_{k+1} = \conv \left( \left\{ z^{0},  z^{1},  \ldots, z^{k},  z^{k+1} \right \} \cup \bap_{Y}(X)\right) = \conv (S_{k} \cup  \{z^{k+1}\}) ,\] 
our  induction hypothesis says that all $u\in S_{k}$ satisfies 
\[      \label{eq:prop-BAPconvuomega}
  \omega  \dist\left(u, \bap_{Y}(X)  \right) \leq  \dist\left(u, \supH_{Y}(X)  \right)
\]
and inequality \cref{eq:prop-BAPconv1} specialized for $k+1$ gives us
\[\label{eq:prop-BAPconvvomega}
  \omega  \dist\left(z^{k+1}, \bap_{Y}(X)  \right) \leq  \dist\left(z^{k+1}, \supH_{Y}(X)  \right).
\]

Consider an arbitrary point in $ S_{k+1} $. It  can be  written as $z_\lambda = \lambda u + (1-\lambda)z^{k+1}$, for some   $u\in S_{k}$ and  $\lambda \in [0,1] $. Let $\bar u,\bar z^{k+1}$ be the distance realizers of $u,z^{k+1}$ with respect to $\bap_{Y}(X)$, respectively and $\tilde u, \tilde z^{k+1}$ be the distance realizers of $u,z^{k+1}$ regarding $\supH_{Y}(X)$, respectively. Note that $\tilde z_{\lambda} \coloneqq P_{\supH_{Y}(X)} (z_{\lambda}) =  \lambda \tilde u + (1-\lambda)\tilde z^{k+1}$, since $\supH_{Y}(X)$ is a hyperplane. Moreover, by the same token, $u - \tilde u = \alpha d$ and  $z^{k+1} - \tilde z^{k+1} = \beta d$, for some $\alpha, \beta \geq 0$ and where $d$ is the displacement vector (pointing from $Y$ to $X$). Thus,
\begin{align}
  \dist\left(z_{\lambda}, \supH_{Y}(X)\right)  & = \norm{ z_{\lambda} - \tilde z_{\lambda} } 
    = \norm{ \lambda  (u - \tilde u) + (1-\lambda) (z^{k+1}- \tilde z^{k+1}) } \\ 
    &  = \norm{ \lambda  (\alpha d) + (1-\lambda) (\beta d) }  = \norm{ (\lambda  \alpha  + (1-\lambda) \beta) d } \\
   & = (\lambda  \alpha  + (1-\lambda) \beta) \norm{  d } = \lambda  \alpha\norm{  d }   + (1-\lambda) \beta \norm{  d } \\
   & = \lambda  \norm{  \alpha d }   + (1-\lambda)  \norm{\beta  d } = \lambda  \norm{  u - \tilde u}   + (1-\lambda)  \norm{z^{k+1} - \tilde z^{k+1} } \\
   & = \lambda \dist\left(u, \supH_{Y}(X)  \right)  + (1-\lambda) \dist\left(z^{k+1}, \supH_{Y}(X)  \right) .       \label{eq:prop-BAPconv5}
\end{align}

Now, the point $\hat z_{\lambda} \coloneqq \lambda \bar u + (1-\lambda)\bar z^{k+1}$ belongs to $\bap_{Y}(X)$ but need not coincide with $\bar z_{\lambda}$, the distance realizer of $z_{\lambda}$ to $\bap_{Y}(X)$. Nevertheless, we can write 
  \begin{align}   
    \omega\dist(z_{\lambda },\bap_{Y}(X)) & =  \omega\norm{z_{\lambda } - \bar z_\lambda}  
    \leq   \omega\norm{z_{\lambda } - \hat z_\lambda}  \\
   & =  \omega \norm{(\lambda u + (1-\lambda) z^{k+1}) - (\lambda \bar u + (1-\lambda)\bar z^{k+1} ) }  
   \\
   & \leq    \lambda \omega\norm{ u - \bar u}  + (1-\lambda) \omega \norm{z^{k+1} - \bar z^{k+1}  }   \\
   & = \lambda \omega \dist\left(u, \bap_{Y}(X)  \right)  + (1-\lambda)\omega  \dist\left(z^{k+1}, \bap_{Y}(X)  \right) \\
      & \leq \lambda \dist\left(u, \supH_{Y}(X)  \right)  + (1-\lambda) \dist\left(z^{k+1}, \supH_{Y}(X)  \right) \\
      & = \dist\left(z_{\lambda}, \supH_{Y}(X)  \right), 
   \end{align}
where the last inequality is by \cref{eq:prop-BAPconvuomega,eq:prop-BAPconvvomega} and the last  equality is due to \cref{eq:prop-BAPconv5}. Therefore, the induction argument is completed and the error bound \cref{eq:prop-BAPconv2} holds for all points in $S$. 

Next we are going to prove that the error bound \cref{eq:prop-BAPconv2}  extends to $\overline{S}$, the closure of  $S$. Take $s \in \overline{S}$. Then, there exists a sequence $(s^{\ell})_{\ell \in \NN} \subset S$ so that $s^{\ell} \to s$. Note that we  have just proven the error bound for all points in $S$, thus  all $s^{\ell}$ satisfies
\[
  \omega \dist(s^{\ell},\bap_{Y}(X)) \leq \dist(s^{\ell}, \supH_{Y}(X)).
\] 
Taking into account that the distance functions to both sets $\bap_{Y}(X)$ and $\supH_{Y}(X)$ are continuous, we can take limits as $\ell$ goes to infinity in both sides of the previous inequality, getting the result. 

Finally, since  $\bap_{Y}(X) \subset \overline{S}\subset X$,  we have $\dist(X,Y) \leq \dist(\overline{S},Y)\leq \dist(\bap_{Y}(X),Y)= \dist(X,Y)$, that is,  $\dist(\overline{S},Y) =  \dist(X,Y)$. Thus,   $ \bap_{Y}(\overline{S})  = \bap_{Y}(X)  $ and $ \supH_{Y}(\overline{S})  = \supH_{Y}(X)  $  
\end{proof} 

We are now ready to establish finite convergence of MAP under inconsistency added by BAP-EB. 

\begin{theorem}[Finite convergence of MAP under infeasibility and BAP-EB]\label{thm:FiniteMAP-Convex-BAPII}
    Let $X, Y \subset \RR^{n}$ be  closed convex sets such that $X\cap Y = \emptyset$, assume that the distance   between them is attained and let  $x^{0}\in \RR^{n}$ be given. Then, the MAP sequence $(x^k)_{k\in\NN}$,  defined by $x^{k+1} \coloneqq P_{X}P_{Y}(x^{k})$,  converges to some $x^{*}\in \bap_{Y}(X)$. 
    If  BAP-EB  from \Cref{def:BAP-error-bound-II} is satisfied by $X$ and $Y$ at $x^{*}$, then $(x^k)_{k\in\NN}$ converges to  $x^{*}$  in a finite number of steps.   
  
  \end{theorem}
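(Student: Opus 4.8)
The plan is to separate the statement into its two assertions and dispatch them with the machinery already assembled. Global convergence of $(x^k)_{k\in\NN}$ to a point $x^*\in\bap_Y(X)$ is immediate from \Cref{conv-MAP}(i), since the distance is attained, so the real content is the finite convergence once BAP-EB holds at $x^*$. I would argue by contradiction, assuming the sequence never reaches $\bap_Y(X)$. Because $\bap_Y(X)=\Fix(P_XP_Y)$ by \cref{fact:bapsets}(i), this means $x^k\notin\bap_Y(X)$ for every $k$, and a short argument using \cref{fact:bapsets}(iii) together with \Cref{fact:dist-attained} (a point $P_Y(x^k)\in\bap_X(Y)$ would force $P_XP_Y(x^k)=P_Y(x^k)+d\in\bap_Y(X)$) shows that then $P_Y(x^k)\notin\bap_X(Y)$ and $P_XP_Y(x^k)\notin\bap_Y(X)$ for all $k$ as well. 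For $k$ large we have $x^k\in\BB_\delta(x^*)\cap X$, so \Cref{def:BAP-error-bound-II} forces, at each such index, at least one of the two inequalities in \cref{eq:BAP-error-bound-II}. Writing $K_2$ for the indices where the second ($X$-side) inequality holds and $K_1$ for those where the first ($Y$-side) one holds, at least one of $K_1,K_2$ is infinite.

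Consider the case $K_2$ infinite; the case $K_1$ infinite is symmetric, carried out on $Y$ and then invoking that a point $P_Y(x^m)\in\bap_X(Y)$ propels $P_XP_Y(x^m)$ into $\bap_Y(X)$. Along $K_2$ the iterates $x^{k+1}=P_XP_Y(x^k)$ lie in $X$, converge to $x^*$, and satisfy the unilateral $X$-side error bound pointwise, which is exactly the hypothesis of \Cref{prop:BAP-conv}. Applying it to the sequence $(x^{k+1})_{k\in K_2}$ produces a closed convex set $\overline S\subseteq X$ on which the unilateral BAP-EB holds \emph{globally}, with $\bap_Y(\overline S)=\bap_Y(X)$, $\supH_Y(\overline S)=\supH_Y(X)$ and $\dist(\overline S,Y)=\dist(X,Y)$. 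Since the bound is now global on $\overline S$, \Cref{thm:SingleStep-Convex} applies to the disjoint pair $(\overline S,Y)$ with radius $r'\coloneqq\omega\dist(X,Y)$, giving $P_{\overline S}P_Y(z)\in\bap_Y(\overline S)=\bap_Y(X)$ for every $z\in\BB_{r'}(x^*)$.

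The remaining, and decisive, step is to transfer this single-step conclusion from $\overline S$ back to $X$. For $k\in K_2$ large enough that $x^{k+1}\in\BB_{r'}(x^*)$, set $y\coloneqq P_Y(x^{k+1})$ and $q\coloneqq P_{\overline S}(y)\in\bap_Y(X)$; the goal is to show the genuine iterate $x^{k+2}=P_X(y)$ equals $q$, contradicting $x^{k+2}\notin\bap_Y(X)$. The clean mechanism is the elementary observation that if $P_X(y)$ happens to lie in the subset $\overline S\subseteq X$, then it is automatically the projection onto $\overline S$ and hence equals $q$. This makes the argument airtight whenever the active inequality is eventually of a single type: if $K_1$ is finite, the whole tail $\{x^{k+1}\mid k\ge N\}$ may be fed to \Cref{prop:BAP-conv}, the tail of the MAP sequence then lies in $\overline S$ and coincides with the $(\overline S,Y)$-MAP, and finite convergence drops out of \Cref{thm:FiniteMAP-Convex}. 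The main obstacle is precisely the mixed regime, in which $K_1$ and $K_2$ are both infinite and the active side alternates, so that a freshly produced iterate need not belong to the auxiliary set built from the other side; here the coincidence $P_X=P_{\overline S}$ at the relevant point is no longer free. I expect to resolve this by running the symmetric $Y$-side construction $\overline T$ simultaneously and chaining the two single-step conclusions across consecutive indices, so that whichever inequality is active at a given large index delivers a point of the corresponding best-approximation set and thereby forces the next iterate into $\bap_Y(X)$. This chaining is the step that will require the most care.
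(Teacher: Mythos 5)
Your reduction---global convergence from \Cref{conv-MAP}(i), argument by contradiction, splitting the large indices according to which inequality of \cref{eq:BAP-error-bound-II} is active, feeding the subsequence satisfying the $X$-side inequality into \cref{prop:BAP-conv} to build $\overline S$, and invoking \Cref{thm:SingleStep-Convex} for the pair $(\overline S,Y)$---is exactly the paper's. But the proof is not finished: you yourself flag that in the mixed regime (both $K_1$ and $K_2$ infinite, with the active inequality alternating) your mechanism breaks down, because the next iterate $x^{k+2}=P_X(P_Y(x^{k+1}))$ need not lie in $\overline S$, and you only sketch a hoped-for ``chaining'' of the two one-sided constructions without supplying it. As written this is a genuine gap: the single-step conclusions you have established concern $P_{\overline S}$ (and its $Y$-side analogue), not $P_X$ and $P_Y$, and nothing you have proved transfers them to the actual MAP iterates when the active side keeps switching.

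The paper closes the argument by aiming the contradiction at the \emph{current} iterate rather than the next one. Fix a large index $m$ such that $x^m$ is one of the points used to build $\overline S$ (in your notation, $m=k+1$ with $k\in K_2$). Then $x^m\in\overline S$ by construction, $x^m\notin\bap_Y(X)$ by the standing assumption, and $x^m=P_X(P_Y(x^{m-1}))$ where the predecessor $x^{m-1}$ need not belong to $K_2$ or to $\overline S$---it is merely a point of $\BB_{r}(x^{*})$, which is all \Cref{thm:SingleStep-Convex} asks of it. Since $\overline S\subset X$ and $x^m$ is the nearest point of $X$ to $P_Y(x^{m-1})$, one has $\norm{P_Y(x^{m-1})-x^m}\le\norm{P_Y(x^{m-1})-P_{\overline S}P_Y(x^{m-1})}$; but $P_{\overline S}P_Y(x^{m-1})\in\bap_Y(X)$ while $x^m\in\overline S\setminus\bap_Y(X)$, so uniqueness of projections onto closed convex sets forces the reverse strict inequality. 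This backward-looking distance comparison needs only that $K_2$ (respectively $K_1$, with $X$ and $Y$ swapped) is infinite, so the ``mixed regime'' never arises; your forward-looking attempt to prove $P_X(y)=P_{\overline S}(y)$ is solving a harder problem than necessary. Reorient the final step this way and the proof closes.
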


\begin{proof}
  The convergence of the MAP sequence to some $x^{*} \in \bap_{Y}(X)$ for two disjoint closed convex sets $X$ and $Y$, assuming its distance is  attainable, is due to \Cref{conv-MAP}(i). Assume BAP-EB as in the statement of the theorem and that the MAP sequence $(x^k)_{k\in\NN}$ does not converge in a finite number of steps.  Thus, since MAP iterates satisfy $\norm{x^{k+1}-x^{*}}\leq \norm{x^{k}-x^{*}}$, all the terms $x^{k}$ lie outside $\bap_{Y}(X)$ and, in particular, they are all different from $x^{*}$.  

  Note that BAP-EB consists of all points $x\in X$ near $x^{*}$ fulfilling at least one of the two inequalities \cref{eq:BAP-error-bound-II} given in \cref{def:BAP-error-bound-II}. In the following we divide our proof in two cases.

  First, let us assume that there is a subsequence of the MAP sequence satisfying the second inequality in  \cref{eq:BAP-error-bound-II}. 
   So, we have an infinite set of indexes $J\subset \NN$, such that
  \[
    \omega \dist(P_{X}P_{Y}(x^{k-1}),\bap_{Y}(X)) \leq \dist(P_{X}P_{Y}(x^{k-1}), \supH_{Y}(X)),  \text{ for all } k\in J,
  \]
which is the same as 
   \[\label{eq:ThmBAP2}
     \omega \dist(x^k,\bap_{Y}(X)) \leq \dist(x^k, \supH_{Y}(X)), \text{ for all } k\in J.
  \]
 
  Let us define the closed convex set  $\overline{S}  \coloneqq \closu\left( \conv \left ((x^{k})_{k\in J} \cup \bap_{Y}(X) \right )\right) \subset X$.  Clearly, $x^{*}\in\overline{S}$ and, since \cref{eq:ThmBAP2} holds and $(x^{k})_{k\in J}$ converges to $x^{*}$, \cref{prop:BAP-conv} can be used. Then, for all $z \in \overline{S} $, 
  \[
    \omega \dist(z,\bap_{Y}(\overline{S})) = \omega \dist(z,\bap_{Y}(X)) \leq \dist(z, \supH_{Y}(X)) =  \dist(z, \supH_{Y}(\overline{S})).
\]
This inequality means that the unilateral BAP-EB condition from \cref{def:BAP-error-bound}  is satisfied for $\overline{S}$ and $Y$ at $x^*$. Therefore, \cref{thm:SingleStep-Convex} applies because, for all $k\in \NN$ sufficiently large, $x^{k-1}$ is near enough to $ x^{*}$. Hence, for all large $k$, we have $P_{\overline{S}}P_Y(x^{k-1}) \in \bap_{Y}(\overline{S})=\bap_{Y}(X)$, where the equality of the BAP sets follows from \cref{prop:BAP-conv}.

Recall that, for all $k\in J$, $x^{k}$ belongs to $\overline{S}\subset  X$. Moreover, for each  $k\in J$ there exists $j_k\in \NN$ such that $k+j_k\in J$ and thus  $x^{k+j_k}\in \overline{S}$, and, in addition, $\norm{x^{k+j_k} - x^{*}}\leq \norm{x^{k} - x^{*}}$. 

Note that $x^{k+j_k} = P_XP_Y(x^{k+j_k -1})$ and that $P_Y(x^{k+j_k -1})$ approaches  $P_Y(x^{*})$, when $k$ goes to infinity. Thus, for all large $k\in J$,  $P_{\overline{S}}P_Y(x^{k+j_k -1}) \in \bap_{Y}(X)$. Then, by the characterization of projection onto convex sets, we get 
\begin{align}
 \scal{P_Y(x^{k+j_k -1}) - P_{\overline{S}}P_Y(x^{k+j_k -1})} {x^{k+j_k} - P_{\overline{S}}P_Y(x^{k+j_k -1})}  \leq 0.
\end{align}
Note that both vectors in the above inner product are different from zero, because $P_{\overline{S}}P_Y(x^{k+j_k -1})$ is in $\bap_Y(X)$ but not in $Y$ and $x^{k+j_k}$ is in $\overline{S} $ but not in $\bap_{Y}(X)$. Then, \[\norm{P_Y(x^{k+j_k -1}) - x^{k+j_k}}>\norm{P_Y(x^{k+j_k -1}) - P_{\overline{S}}P_Y(x^{k+j_k -1})}.\] On the other hand, since $\overline{S}\subset X$, we have   
\begin{align}
\norm{P_Y(x^{k+j_k -1}) - P_{\overline{S}}P_Y(x^{k+j_k -1})} \ge & \norm{P_Y(x^{k+j_k -1}) - P_XP_Y(x^{k+j_k -1})}  \\ = & \norm{P_Y(x^{k+j_k -1}) - x^{k+j_k}}  \\ > &  \norm{P_Y(x^{k+j_k -1}) - P_{\overline{S}}P_Y(x^{k+j_k -1})}, \label{eq:ThmBAP-contr}
\end{align}
which is a contradiction.

For second case, suppose now that there exists a subsequence $(x^k)_{k\in J'}$ of the MAP sequence $(x^k)_{k\in\NN}$ satisfying the first inequality in  \cref{eq:BAP-error-bound-II}, that is,  
\[
  \omega \dist(P_{Y}(x^{k}),\bap_{X}(Y))  \leq \dist(P_{Y}(x^{k}), \supH_{X}(Y)), \text{ for all } k\in J'.
\]
By defining  the sequence $(y^{k})_{k\in J'} $, where $y^{k}\coloneqq P_{Y}(x^{k})$, we have 
\[\label{eq:ThmBAP-yk}
  \omega \dist(y^{k},\bap_{X}(Y))  \leq \dist(y^{k}, \supH_{X}(Y)), \text{ for all } k\in J'.
\]
Then, the proof can be carried out analogously to the one in the first case  by changing the roles of $X$ and $Y$.  Defining the closed convex set  $\overline{S}'  \coloneqq \closu\left( \conv \left ((y^{k})_{k\in J'} \cup \bap_{X}(Y) \right )\right) \subset Y$, we can use   \cref{prop:BAP-conv}, since $(y^{k})_{k\in J'}$ converges to $y^{*}$ and \cref{eq:ThmBAP-yk} holds, to get the unilateral BAP-EB now for $\overline{S}'$ and $X$ at $y^{*}$. Therefore, \cref{thm:SingleStep-Convex} applies  to prove that $P_{\overline{S}'}P_X(y^{k-1}) \in \bap_X(\overline{S}')=\bap_X(Y)$, for all $k$ sufficiently large. So, we also derive a contradiction similar to \cref{eq:ThmBAP-contr}.

Thus, the MAP sequence $(x^k)_{k\in\NN}$ converges to $x^{*}$  in a finite number of steps.
\end{proof}

We finalize this subsection with a necessary and sufficient condition for a MAP sequence to converge in a finite number of steps.  

\begin{theorem}[Finite convergence of a MAP sequence under infeasibility and BAP-EB]\label{cor:FiniteMAP-Convex-BAPII}
  Let $X, Y \subset \RR^{n}$ be  closed convex sets such that $X\cap Y = \emptyset$, assume that the distance   between them is attained. Let  $x^{0}\in \RR^{n}$ be given and consider the MAP sequence $(x^k)_{k\in\NN}$,  defined by $x^{k+1} \coloneqq P_{X}P_{Y}(x^{k})$ with limit point $x^{*}\in \bap_{Y}(X)$. The sequence  $(x^k)_{k\in\NN}$ converges in a finite number of steps if, and only if, there exists $\omega >0 $ such that, for all $k\in\NN\backslash\{0\}$, at least one of the two inequalities holds
  \[\label{eq:cor-BAP-error-bound-II}
  \begin{aligned}   
    \omega \dist(P_{Y}(x^{k}),\bap_{X}(Y)) & \leq \dist(P_{Y}(x^{k}), \supH_{X}(Y)),
    \\
  \omega \dist(x^{k+1},\bap_{Y}(X)) &\leq \dist(x^{k+1}, \supH_{Y}(X)).
\end{aligned}
\]  
\end{theorem}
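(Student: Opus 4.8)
The plan is to treat the equivalence as two separate implications, using in both that \Cref{conv-MAP}(i) already guarantees $x^k\to x^*\in\bap_Y(X)$ (and $y^k:=P_Y(x^k)\to y^*:=x^*-d\in\bap_X(Y)$, by \Cref{fact:bapsets}). The sufficiency direction is essentially a re-run of the proof of \Cref{thm:FiniteMAP-Convex-BAPII} with the neighborhood hypothesis replaced by the along-the-sequence hypothesis, while the necessity direction is a finite, and largely geometric, verification.

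For \emph{sufficiency}, I assume the constant $\omega>0$ exists and, arguing by contradiction, that $(x^k)_{k\in\NN}$ never reaches $\bap_Y(X)$. Since $\|x^{k+1}-x^*\|\le\|x^k-x^*\|$, every iterate lies off $\bap_Y(X)$, so all left-hand distances in \eqref{eq:cor-BAP-error-bound-II} are strictly positive. By the pigeonhole principle one of the two inequalities holds along an infinite index set $J$. If it is the second one, I set $z^k:=x^k$ for $k\in J$; this subsequence converges to $x^*$ and satisfies $\omega\dist(z^k,\bap_Y(X))\le\dist(z^k,\supH_Y(X))$, so \Cref{prop:BAP-conv} promotes it to a genuine unilateral BAP-EB on $\overline S:=\closu\conv\!\big((x^k)_{k\in J}\cup\bap_Y(X)\big)\subseteq X$, with $\bap_Y(\overline S)=\bap_Y(X)$ and $\supH_Y(\overline S)=\supH_Y(X)$. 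Then \Cref{thm:SingleStep-Convex}, applied to $\overline S$ and $Y$, forces $P_{\overline S}P_Y(x^{k-1})\in\bap_Y(X)$ for all large $k$, and the projection characterization yields the same contradiction produced in \eqref{eq:ThmBAP-contr}. If instead the first inequality holds along $J$, I interchange the roles of $X$ and $Y$, working with $\overline S':=\closu\conv\!\big((y^k)_{k\in J}\cup\bap_X(Y)\big)\subseteq Y$ and $y^k\to y^*$. Thus sufficiency copies \Cref{thm:FiniteMAP-Convex-BAPII} verbatim, the only change being that the needed inequalities are now handed to us term by term.

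For \emph{necessity}, let $\bar k$ be minimal with $x^{\bar k}=x^*$. For every $k\ge\bar k-1$ one has $x^{k+1}=x^*\in\bap_Y(X)$, so $\dist(x^{k+1},\bap_Y(X))=0$ and the second inequality of \eqref{eq:cor-BAP-error-bound-II} holds for any $\omega$. Only the finitely many indices $1\le k\le\bar k-2$ remain; for these $x^{k+1}\neq x^*$ cannot be a fixed point of $P_XP_Y$, hence $x^{k+1}\notin\bap_Y(X)=\Fix(P_XP_Y)$ (\Cref{fact:bapsets}(i)) and $\dist(x^{k+1},\bap_Y(X))>0$. If $\dist(x^{k+1},\supH_Y(X))>0$ the second inequality holds with the positive ratio $\omega_k:=\dist(x^{k+1},\supH_Y(X))/\dist(x^{k+1},\bap_Y(X))$; if instead $\dist(y^k,\bap_X(Y))=0$ the first holds for any $\omega$, and if $\dist(y^k,\bap_X(Y))>0$ together with $\dist(y^k,\supH_X(Y))>0$ the first holds with an analogous positive ratio. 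Taking $\omega$ to be the minimum of these finitely many positive ratios then settles the claim --- \emph{provided} no index simultaneously satisfies $\dist(x^{k+1},\supH_Y(X))=0$ and $\dist(y^k,\supH_X(Y))=0$ with both companion distances positive.

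The \emph{main obstacle} is exactly ruling out this degenerate coincidence, i.e.\ that $x^k\in X$ could produce $y^k=P_Y(x^k)\in\supH_X(Y)\setminus\bap_X(Y)$ together with $x^{k+1}=P_X(y^k)\in\supH_Y(X)\setminus\bap_Y(X)$. I would show it is impossible by deriving $x^{k+1}\in\bap_Y(X)$ outright. Writing $\bar y:=x^*-d\in\bap_X(Y)$, one has $P_X(\bar y)=x^*$ (\Cref{fact:bapsets}(i),(iii)); since $x^{k+1},x^*\in\supH_Y(X)$ and $y^k,\bar y\in\supH_X(Y)$, the vectors $p:=x^{k+1}-x^*$ and $q:=y^k-\bar y$ are both orthogonal to $d$, and firm nonexpansiveness of $P_X$ at $y^k,\bar y$ gives $\|p\|^2\le\langle p,q\rangle$. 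The defining inequality $\langle x-y,d\rangle\ge\|d\|^2$ for $x\in X,\ y\in Y$ (a consequence of $d=P_{\closu(X-Y)}(0)$) combined with $x^{k+1}\in\supH_Y(X)$ and $y^k\in\supH_X(Y)$ forces $\langle x^{k+1}-y^k,d\rangle=\|d\|^2$, so $x^{k+1}-d=\bar y+p$ lies on the supporting hyperplane $\supH_X(Y)$ of $Y$. It then suffices to show $\bar y+p\in Y$, for then $x^{k+1}\in X\cap(Y+d)=\bap_Y(X)$ by \Cref{fact:bapsets}(iii), a contradiction. In the plane this is immediate: $p$ and $q$ are collinear, and $\|p\|^2\le\langle p,q\rangle$ forces $p=sq$ with $s\in[0,1]$, whence $\bar y+p=(1-s)\bar y+sy^k\in[\bar y,y^k]\subseteq Y$. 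The genuinely delicate part, and where I expect the real work to lie, is the higher-dimensional version of this last containment, which requires exploiting the convexity of the top face $Y\cap\supH_X(Y)$ (e.g.\ via a separation argument) rather than mere collinearity. Once the coincidence is excluded, necessity follows at once, and --- notably --- only the second inequality of \eqref{eq:cor-BAP-error-bound-II} is ever strictly needed, the first serving only as the fallback that the exclusion renders unnecessary.
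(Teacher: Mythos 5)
Your sufficiency direction is correct and is essentially the paper's own argument: replace the neighborhood hypothesis of \cref{thm:FiniteMAP-Convex-BAPII} by the along-the-sequence hypothesis, extract an infinite index set on which one of the two inequalities holds, and invoke \cref{prop:BAP-conv} followed by \cref{thm:SingleStep-Convex} to reach the contradiction \cref{eq:ThmBAP-contr}. No issues there.

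The necessity direction has a genuine gap, and it is precisely at the step you flag as "where I expect the real work to lie" --- but the problem is worse than a missing higher-dimensional lemma: the intermediate claim you propose to prove is \emph{false} in $\RR^{3}$. You aim to show, using only that $y^{k}\in Y\cap\supH_{X}(Y)$ and $x^{k+1}=P_{X}(y^{k})\in\supH_{Y}(X)$, that $x^{k+1}\in\bap_{Y}(X)$ "outright". Take $X=\{(s,t,1): t\ge 0\}$ and $Y=\{(s,t,0): t\le s\}$: here $d=(0,0,1)$, $\supH_{Y}(X)=\{z_{3}=1\}\supseteq X$ and $\supH_{X}(Y)=\{z_{3}=0\}\supseteq Y$, so \emph{every} iterate pair satisfies your two hyperplane memberships, yet starting from $x^{0}=(-1,0,1)$ one computes $x^{k}=(-2^{-k},0,1)\notin\bap_{Y}(X)$ for all $k$. (Your firm-nonexpansiveness inequality $\norm{p}^{2}\le\scal{p}{q}$ holds here with $p=(-c,0,0)$ and $q=(-c,-c,0)$ not collinear, which is exactly where the planar collinearity argument breaks.) This example is not a counterexample to the theorem --- MAP does not converge finitely there --- but it shows that the degenerate coincidence cannot be excluded from the local projection identities alone; any correct exclusion must use the hypothesis that some later iterate actually lands in $\bap_{Y}(X)$, which your argument never invokes. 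For comparison, the paper takes a different (and much terser) route: it builds $\omega$ from the second family of ratios only, asserting that the finitely many intermediate iterates $x^{1},\dots,x^{\bar k-1}$ lie outside $\supH_{Y}(X)$; it does not consider the coincidence you isolate. A repair along your lines is possible but must thread the finite-convergence hypothesis through the projection inequalities (e.g.\ one can show that if $x^{\bar k}\in\bap_{Y}(X)$ and $x^{\bar k-1}\in\supH_{Y}(X)$ then $y^{\bar k-1}\in\bap_{X}(Y)$, and from there that $y^{\bar k-2}\in\supH_{X}(Y)$ would force $x^{\bar k-1}\in\bap_{Y}(X)$); as written, your necessity proof does not close.
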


\begin{proof}
  Assume that the MAP sequence $(x^k)_{k\in\NN}$   converges in a finite number of steps to $x^{*}$ and reaches this limit point for the first time at iteration $\bar k \in\NN\backslash\{0\}$. Thus, for any positive $\omega$ both inequalities in \cref{eq:cor-BAP-error-bound-II} are trivially satisfied for all $k \geq \bar k$. That said, we are done if $\bar k = 1$.
  Suppose $\bar k \geq   2$. Then, the positive $\omega$ defined by 
  \[\omega \coloneqq \min_{0\leq k < \bar k - 1}\left\{ \frac{\dist(x^{k+1}, \supH_{Y}(X))}{ \dist(x^{k+1},\bap_{Y}(X))}\right\} \]
yields the result, since for all $1\leq k<\bar k$, we have $x^{k}\notin \bap_{Y}(X)$ and $x^{k}\notin \supH_{Y}(X)$.

Reciprocally, assume the existence of $\omega>0$ as in the statement of the corollary. Then, we can follow the exact same lines of the proof of \cref{thm:FiniteMAP-Convex-BAPII} just replacing the sequence at the beginning of that proof by the MAP sequence $(x^k)_{k\in\NN}$ and the result follows. 
\end{proof}

\subsection{Finite termination of other projection methods}\label{sec:finite-extensions}

In this subsection, we present extensions of some results of \Cref{sec:finiteconvmap}. We address two families of convex feasibility problems. The first deals with two closed convex sets with empty intersection, while the second concerns finite number of closed convex sets having no point in common.

\subsubsection{Best-pair tracking}

Here we discuss  how
our results 
apply to a broader class of methods tracking best approximation pairs. Interestingly, one may have a method generating a sequence of infinitely many distinct points for which a MAP step intrinsically provides a finite termination criterion. This is formally presented in the next theorem which, in turn, will serve to identify finite termination of the well-known Cimmino method and the famous Douglas-Rachford method.  

For this  subsection, let $X, Y \subset \RR^{n}$ be  closed convex sets such that $X\cap Y = \emptyset$ and assume that the distance between $X$ and $Y$ is attained. 


\begin{lemma}[Intrinsic finite termination]\label{thm:intrisic-finite} For a given $z^{0}\in\RR^{n}$, consider a method generating a sequence $(z^k)_{k\in\NN}$ such that the shadow sequence onto $X$, $(P_X(z^k))_{k\in\NN}$, converges to a point $\hat x \in \bap_{Y}(X)$. If the unilateral BAP-EB  from \cref{def:BAP-error-bound} is satisfied at $\hat x$, then there exists an index $\hat k \geq 0$, such that for all $k\geq \hat k$, we have $P_{X}P_{Y}(P_{X}(z^{k})) = P_{X}P_{Y}(P_{X}(z^{\hat k}))\in \bap_{Y}(X)$.
\end{lemma}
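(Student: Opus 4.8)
The plan is to reduce the statement to the single-step result \cref{thm:SingleStep-Convex}. Write $w^{k}\coloneqq P_{X}(z^{k})\in X$ for the shadow iterates, so that by hypothesis $w^{k}\to \hat x\in\bap_{Y}(X)$. Since $X\cap Y=\emptyset$ with attained distance and since the unilateral BAP-EB from \cref{def:BAP-error-bound} holds at $\hat x$ with some constants $\omega>0$ and $\delta>0$, \cref{thm:SingleStep-Convex} applies with $x^{*}=\hat x$: setting $r\coloneqq\min\{\omega\dist(X,Y),\frac{\delta}{2}\}>0$, one gets $P_{X}P_{Y}(z)\in\bap_{Y}(X)$ for every $z\in\BB_{r}(\hat x)$. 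The whole argument thus hinges on feeding the shadow iterates into this lemma.

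Next I would use the convergence of the shadow sequence to locate the index. Because $w^{k}\to\hat x$, there is an index $\hat k\ge 0$ such that $w^{k}\in\BB_{r}(\hat x)$ for every $k\ge\hat k$; concretely one takes $\hat k$ to be the first index from which the shadow enters this ball. Applying \cref{thm:SingleStep-Convex} to each $z=w^{k}$ with $k\ge\hat k$ then yields $P_{X}P_{Y}(P_{X}(z^{k}))=P_{X}P_{Y}(w^{k})\in\bap_{Y}(X)$ for all $k\ge\hat k$. This already delivers the essential content: from $\hat k$ on, a single MAP step applied to the shadow produces a best approximation point, which is exactly the intrinsic finite-termination phenomenon the lemma is about.

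It remains to secure the displayed identity $P_{X}P_{Y}(w^{k})=P_{X}P_{Y}(w^{\hat k})$, and this is the step I expect to be the main obstacle, since in general $\bap_{Y}(X)$ is a nontrivial convex set rather than a singleton and the reached points need not be literally frozen. The natural lever is \cref{fact:bapsets}(i), which identifies $\bap_{Y}(X)=\Fix(P_{X}P_{Y})$: every value reached above is a fixed point of the MAP operator, hence pinned under any further MAP iteration. Combining this with the continuity of $P_{X}P_{Y}$ and $P_{X}P_{Y}(\hat x)=\hat x$ shows that the detected points $P_{X}P_{Y}(w^{k})$ converge back to $\hat x$ while remaining inside $\Fix(P_{X}P_{Y})$. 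The cleanest way to record the conclusion is therefore to set $\bar x\coloneqq P_{X}P_{Y}(w^{\hat k})\in\bap_{Y}(X)=\Fix(P_{X}P_{Y})$ and to read the statement as: for all $k\ge\hat k$ the MAP step of the shadow has reached the best-approximation set and stays there, with $\bar x$ the terminal best approximation point detected at $\hat k$; any further sharpening of the equality must draw on the fixed-point property just noted together with whatever extra stabilization the generating method supplies for its shadow sequence.
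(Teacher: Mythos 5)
Your argument is essentially the paper's own proof: invoke \cref{thm:SingleStep-Convex} at $x^{*}=\hat x$ with $r=\min\{\omega\dist(X,Y),\tfrac{\delta}{2}\}$, and use convergence of the shadow sequence $(P_X(z^k))_{k\in\NN}$ to $\hat x$ to place every $P_X(z^k)$, $k\ge\hat k$, inside $\BB_{r}(\hat x)$, so that one alternating step lands in $\bap_{Y}(X)$. The difficulty you flag with the literal equality $P_{X}P_{Y}(P_{X}(z^{k}))=P_{X}P_{Y}(P_{X}(z^{\hat k}))$ is a fair observation, but the paper's proof does not resolve it either --- it stops at membership in $\bap_{Y}(X)$ and asserts that the result follows --- and the property actually used in the downstream corollaries (Cimmino, Douglas--Rachford) is only that the MAP step of the shadow reaches $\bap_{Y}(X)$ from some index on, which you have fully established.
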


\begin{proof}
The result is a consequence of \Cref{thm:SingleStep-Convex}. Indeed, let $\hat x$ play the role of $x^*$ in \Cref{thm:SingleStep-Convex}. Since the shadow sequence $(P_X(z^k))_{k\in\NN}$ converges to $\hat x$, there exists $\hat k$ such that $P_X(z^k)\in \BB_{r}(\hat x)$ for all $k\ge \hat k$ where $r = \min\left\{\omega \dist(X,Y),\frac{\delta}{2}\right\}$. Hence, \Cref{thm:SingleStep-Convex} can be employed and the result follows. 
\end{proof}





We proceed by  enforcing \Cref{thm:intrisic-finite} for the Cimmino method~\cite{Cimmino:1938}. 
\begin{corollary}[Cimmino’s intrinsic finite termination]\label{corol:Cimmino-finite}
For a given $z^{0}\in\RR^{n}$, consider the Cimmino sequence $(z^k)_{k\in\NN}$ given by $z^{k+1} \coloneqq \frac{1}{2}(P_{X}(z^{k})+P_{Y}(z^{k}))$. Then, there exists a best approximation pair $(\bar x, \bar y)\in \bap(X,Y)$  such that $z_k \to \frac{1}{2}(\bar x + \bar y)$. If, in addition,  the unilateral BAP-EB  from \cref{def:BAP-error-bound} is satisfied at $\bar x$, then there exists an index $\hat k \geq 0$, such that for all $k\geq \hat k$, we have $P_{X}P_{Y}(P_{X}(z^{k})) = P_{X}P_{Y}(P_{X}(z^{\hat k}))\in \bap_{Y}(X)$.
\end{corollary}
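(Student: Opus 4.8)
The plan is to reduce this corollary to \Cref{thm:intrisic-finite} (Intrinsic finite termination). That lemma demands two ingredients: (i) convergence of the Cimmino sequence $(z^k)_{k\in\NN}$ together with convergence of its shadow $(P_X(z^k))_{k\in\NN}$ onto $X$ to a point $\bar x\in\bap_Y(X)$, and (ii) the unilateral BAP-EB at $\bar x$, which is already assumed in the hypothesis. So the only real work is to establish (i): that the Cimmino iteration converges, that its limit is the midpoint $\tfrac12(\bar x+\bar y)$ of a best approximation pair, and that the $X$-shadow converges to the $\bap_Y(X)$-component $\bar x$.

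First I would recognize the Cimmino operator $T\coloneqq\tfrac12(P_X+P_Y)=\tfrac12(\Id-P_X)+\tfrac12(\Id-P_Y)$ reflected appropriately; more usefully, $T$ is the average of two firmly nonexpansive projections, hence itself firmly nonexpansive, and $\Fix T$ can be related to $\bap(X,Y)$. The cleanest route is Pierra's product-space reformulation: set $\bX\coloneqq X\times Y\subset\RR^{2n}$ and let $\DDiag\coloneqq\{(u,u)\mid u\in\RR^n\}$ be the diagonal. One checks that a single step of MAP in the product space between $\bX$ and $\DDiag$, namely $P_{\DDiag}P_{\bX}$, has first component exactly the Cimmino update $\tfrac12(P_X+P_Y)$, since $P_{\DDiag}(a,b)=\bigl(\tfrac12(a+b),\tfrac12(a+b)\bigr)$. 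Because $X\cap Y=\emptyset$ with attainable distance, the sets $\bX$ and $\DDiag$ are disjoint closed convex sets in $\RR^{2n}$ whose distance is attained (the realizer being a best approximation pair $(\bar x,\bar y)$ embedded diagonally). Then \Cref{conv-MAP}(i) applied in the product space gives convergence of the product MAP sequence to a point of $\bap_{\DDiag}(\bX)$, whose diagonal image is $\tfrac12(\bar x+\bar y)$; reading off the first component yields $z^k\to\tfrac12(\bar x+\bar y)$.

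Having the convergence $z^k\to\tfrac12(\bar x+\bar y)$, the shadow claim follows from continuity and nonexpansiveness of $P_X$: since $P_X$ is $1$-Lipschitz, $P_X(z^k)\to P_X\bigl(\tfrac12(\bar x+\bar y)\bigr)$, and one verifies this limit equals $\bar x$ by using that $(\bar x,\bar y)$ is a best approximation pair, so that $\tfrac12(\bar x+\bar y)$ lies on the segment between $\bar x\in\bap_Y(X)$ and $\bar y\in\bap_X(Y)$ with $\bar x-\bar y=d$ the displacement vector; the projection of the midpoint onto $X$ recovers $\bar x$ by the variational characterization of $P_X$ together with \Cref{fact:dist-attained}. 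With the shadow sequence converging to $\bar x\in\bap_Y(X)$ and the unilateral BAP-EB holding at $\bar x$ by hypothesis, \Cref{thm:intrisic-finite} applies directly and delivers the index $\hat k$ and the stationarity $P_XP_Y(P_X(z^k))=P_XP_Y(P_X(z^{\hat k}))\in\bap_Y(X)$ for all $k\ge\hat k$.

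The main obstacle I anticipate is the bookkeeping in the product-space identification: one must confirm carefully that $\dist(\bX,\DDiag)$ is attained (equivalently that $\bap(X,Y)\neq\emptyset$, which is exactly the attainment hypothesis on $\dist(X,Y)$) and that the first component of the product-space limit is genuinely the midpoint of a best approximation pair, not merely some fixed point. This hinges on the correspondence between $\Fix T$ and $\{\tfrac12(\bar x+\bar y)\mid (\bar x,\bar y)\in\bap(X,Y)\}$, which I would either cite from the standard Cimmino/product-space literature or verify by a short fixed-point computation. Once that correspondence is pinned down, the identification of the shadow limit with $\bar x$ and the final invocation of \Cref{thm:intrisic-finite} are routine.
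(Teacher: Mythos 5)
Your proposal is correct and follows essentially the same route as the paper: both reduce the corollary to \cref{thm:intrisic-finite} once the shadow sequence $(P_X(z^k))_{k\in\NN}$ is known to converge to a point of $\bap_Y(X)$. The only difference is that the paper simply cites \cite[Theorem~6.3]{Bauschke:1994} for the convergence of $(z^k)_{k\in\NN}$ to the midpoint of a best approximation pair and treats the shadow convergence as immediate, whereas you re-derive the former via Pierra's product-space reformulation and \cref{conv-MAP}(i) and spell out the identity $P_X\bigl(\tfrac12(\bar x+\bar y)\bigr)=\bar x$ using \cref{fact:dist-attained}; both of these verifications are sound.
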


\begin{proof} The convergence of the sequence $(z^k)_{k\in\NN}$ to  the midpoint of a best approximation pair can be found in \cite[Theorem 6.3]{Bauschke:1994}. Then, of course, both  shadow sequences $(P_X(z^k))_{k\in\NN}$ and $(P_Y(z^k))_{k\in\NN}$ converge to best points $\bar x$ and $\bar y$, respectively. Thus, the hypotheses of \Cref{thm:intrisic-finite} are fulfilled, providing the corollary. 
\end{proof}

The last result of this section shows that we can as well suit \Cref{thm:intrisic-finite} for the Douglas-Rachford method~\cite{Douglas:1956}. Although this method   always diverges under inconsistency~\cite[Theorem 3.13(ii)]{Bauschke:2004a}, one of its shadows detects a best point. This enables us to apply  \Cref{thm:intrisic-finite}.

\begin{corollary}[Douglas-Rachford’s intrinsic finite termination]\label{corol:DR-finite}
For a given $z^{0}\in\RR^{n}$, consider the Douglas-Rachford sequence $(z^k)_{k\in\NN}$ given by $z^{k+1} \coloneqq \frac{1}{2}(z^k + R_{Y}R_{X}(z^{k}))$, where $R_X\coloneqq 2P_X-\Id$ and $R_Y\coloneqq 2P_Y-\Id$ are the reflectors through $X$ and $Y$, respectively. Then, there exists  $\bar x\in \bap_Y(X)$  such that the shadow sequence   $(P_X(z^k))_{k\in\NN}$ converges to $\bar x $. If, in addition,  the unilateral BAP-EB  from \cref{def:BAP-error-bound} is satisfied at $\bar x$, then there exists an index $\hat k \geq 0$, such that for all $k\geq \hat k$, we have $P_{X}P_{Y}(P_{X}(z^{k})) = P_{X}P_{Y}(P_{X}(z^{\hat k}))\in \bap_{Y}(X)$.
\end{corollary}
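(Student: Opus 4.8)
The plan is to mirror, almost verbatim, the argument used for Cimmino's method in \Cref{corol:Cimmino-finite}: first secure the convergence of the relevant shadow sequence to a best point, and then feed this into the intrinsic finite termination result \Cref{thm:intrisic-finite}. The corollary is essentially a repackaging of \Cref{thm:intrisic-finite} once the shadow-convergence statement has been isolated, so almost all of the genuine content is inherited from that lemma rather than produced anew here.

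First I would establish the first assertion, namely that the shadow sequence $(P_X(z^k))_{k\in\NN}$ converges to some $\bar x\in\bap_Y(X)$. The essential difference with the Cimmino setting is that the governing Douglas-Rachford sequence $(z^k)_{k\in\NN}$ itself does \emph{not} converge: by \cite[Theorem 3.13(ii)]{Bauschke:2004a} it diverges under inconsistency, its norm tending to infinity as it drifts away along the direction of the displacement vector $d$. The key fact to invoke is that, notwithstanding this divergence, the $X$-shadow $(P_X(z^k))_{k\in\NN}$ does converge, and that its limit $\bar x$ is a best point of $X$ relative to $Y$, i.e. $\bar x\in\bap_Y(X)$. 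I would record this shadow-convergence property as the first conclusion of the corollary, citing the pertinent result on the best-approximation behaviour of Douglas-Rachford in the inconsistent convex case.

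With $\bar x\in\bap_Y(X)$ in hand as the limit of the $X$-shadow, the second assertion is immediate. Assuming the unilateral BAP-EB from \Cref{def:BAP-error-bound} holds at $\bar x$, the hypotheses of \Cref{thm:intrisic-finite} are satisfied with $\bar x$ playing the role of $\hat x$ and the Douglas-Rachford iterates $(z^k)_{k\in\NN}$ playing the role of the generated sequence, since precisely its $X$-shadow converges to a point of $\bap_Y(X)$. Applying \Cref{thm:intrisic-finite} then yields an index $\hat k\geq 0$ such that $P_X P_Y(P_X(z^k)) = P_X P_Y(P_X(z^{\hat k}))\in\bap_Y(X)$ for all $k\geq\hat k$, which is exactly the desired claim.

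The main obstacle is concentrated entirely in the first step: correctly identifying and citing the shadow-convergence statement for Douglas-Rachford under inconsistency, that is, the somewhat delicate fact that $P_X(z^k)\to\bar x\in\bap_Y(X)$ even though $z^k$ itself escapes to infinity. Once that ingredient is secured, the finite-termination conclusion is a routine invocation of \Cref{thm:intrisic-finite}, in complete analogy with the Cimmino corollary.
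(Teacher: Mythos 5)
Your proposal is correct and follows essentially the same route as the paper: cite the known result that the Douglas--Rachford $X$-shadow converges to a point of $\bap_Y(X)$ despite the divergence of $(z^k)_{k\in\NN}$ (the paper uses \cite[Theorem 3.13 and Remark 3.14(ii)]{Bauschke:2004a}), and then invoke \Cref{thm:intrisic-finite}. Nothing is missing.
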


\begin{proof} The convergence of the shadow sequence $(P_X(z^k))_{k\in\NN}$ to a point $\bar x \in \bap_Y(X)$ is ensured by 
\cite[Theorem 3.13 and Remark 3.14(ii)]{Bauschke:2004a}. So, \cref{thm:intrisic-finite} applies and the result follows. 
\end{proof}

Finally, it is worth commenting that the famous Dykstra's method is also contemplated by \Cref{thm:intrisic-finite}, as it generates a sequence converging to a best approximation pair; see \cite[Theorem 3.7]{Bauschke:1994}.

\subsubsection{Inconsistent multi-set intersection}
In this section, we consider a finite number of closed convex sets $X_1,X_2,\ldots, X_m \subset \RR^n$.  If these sets have nonempty intersection, a point common to them is found, for instance, by the method of Cyclic projections~\cite[Corollary 5.26]{Bauschke:2017} (which is an extension of MAP) or by the Cimmino method~\cite{Cimmino:1938} applied to multi-set intersection problems (also called simultaneous projection  method or method of barycenters); for further discussions see \cite{Censor:1988a,Censor:2014}.
When $X_1\cap X_2\cap \cdots\cap  X_m = \emptyset$ and one of the target sets is bounded,   the method of Cyclic projections converge to a point that provides what we call a {cycle} (see \cite[Theorem 2]{Gubin:1967},\cite[Theorem 5.4.1]{Bauschke:1997},\cite[Corollary 5.24]{Bauschke:2017}) and  the Cimmino method converges to a point minimizing the sum of the squares of the distances to the target sets~\cite[Theorem 4]{Combettes:1994}.

 A point  $\bar y \in \RR^n$ is said to provide a \emph{cycle} with respect to the index order $\{1,2,\ldots,m\}$ if $P_{X_1}(\bar y)$ is a fixed point of the
 Cyclic projection  operator  $P_{X_1}P_{X_{2}}\cdots P_{X_m} $. The correspondent cycle is the tuple  $(\bar y_{1},\bar y_{2},\ldots, \bar y_{m})\in X_1\times X_2\times \cdots\times  X_m$, such that $\bar y_1 = P_{X_{1}}(\bar y)$ and 
\[\label{eq:cycles}
\bar{y}_{1}=P_{X_{1}} (\bar{y}_{2}), \; \ldots\;, \; \bar{y}_{m-1}=P_{X_{m-1}} (\bar{y}_{m}), \; \bar{y}_{m}=P_{X_{m}}( \bar{y}_{1}).
\]
Note that in the two-set case, cycles reduce to best approximation pairs.  Also, any best approximation pair is a cycle in both possible index orders. 

Our aim is to establish sufficient conditions for finite convergence of Cyclic projections and the Cimmino method when the underlying sets have empty intersection. This is done in two statements, both connected to \cref{thm:SingleStep-Convex,thm:FiniteMAP-Convex}. In the first one, we look at Cyclic projections. The second theorem explores the bond between the Cimmino method for the multi-set intersection problem and MAP applied to Pierra’s product space reformulation.

 It is worth reemphasizing that a Cimmino limit point has the nice variational characterization of minimizing the sum of the squares of the distances to the target sets.  We point out that cycles coming from Cyclic projections do not have a variational characterization; this was proven by Baillon, Combettes, and Cominetti in \cite{Baillon:2012}.  
Recent advances in the study of Cyclic projections were derived in \cite{Alwadani:2020}.

Next, we present a result tracking limit points of  Cyclic projections in a finite number of iterations.

\begin{corollary}[Finite convergence of Cyclic projections]\label{thm:Finite-Cyclic}
Let $X_1,X_2,\ldots, X_m \subset \RR^n$ be closed convex sets with empty intersection. Assume also that a cycle exists, that is, the operator $P_{X_1}P_{X_2}\cdots P_{X_m}$ has a fixed point.  Let $x^{0}\in\RR^{n}$ be given and consider  the Cyclic projection sequence $(x^k)_{k\in\NN}$ defined by $$x^{k+1} \coloneqq P_{X_1}P_{X_2}\cdots P_{X_m}(x^{k}).$$

\begin{listi}

\item Then,  $(x^k)_{k\in\NN}$   converges to a point $\bar y_1\in X_1$, which provides the cycle $(\bar y_1,\bar y_2,\ldots, \bar y_m)$, where $
\bar{y}_{1}=P_{X_{1}} (\bar{y}_{2}), \; \ldots\;, \; \bar{y}_{m-1}=P_{X_{m-1}} (\bar{y}_{m}), \; \bar{y}_{m}=P_{X_{m}}( \bar{y}_{1})$.


\item Suppose there exists an index $\bar \imath \in\{1,\ldots,m\}$ such that the cycle points $\bar y_{\bar \imath}$ and $\bar y_{\bar \imath+1}$ form a best approximation pair to $X_{\bar \imath}$ and $X_{\bar \imath + 1}$, that is, $(\bar y_{\bar \imath},\bar y_{\bar \imath+1})\in \bap(X_{\bar \imath},X_{\bar \imath + 1})$ and assume that $X_{\bar \imath}$ and $X_{\bar \imath + 1}$ satisfy the BAP error bound \cref{eq:error-bound} from \Cref{def:BAP-error-bound}  at  $\bar y_{\bar \imath} \in \bap_{X_{\bar \imath + 1}}(X_{\bar \imath })$, that is,  
\[\label{eq:error-bound-cond-Cyclic}
  \omega \dist(x,\bap_{X_{\bar \imath + 1}}(X_{\bar \imath })) \leq \dist(x, \supH_{X_{\bar \imath + 1}}(X_{\bar \imath })), \text{ for all } y\in \BB_{\delta}(\bar y_{\bar \imath})\cap X_{\bar \imath },
\]
with bound  $\omega > 0$ and  radius $\delta > 0$, and  if  $\bar \imath  = m$,  $\bar y_{\bar \imath +1} = \bar y_{1}$. 
Then, we have the existence of an index $\bar k   \geq 0$, such that $x^{ k } = \bar y_1$ for all $k\geq \bar k$, that is, in this case, the Cyclic projection sequence $(x^k)_{k\in\NN}$  converges in at most $\bar k $ steps.


\end{listi}
\end{corollary}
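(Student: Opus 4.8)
The plan is to treat part (i) as essentially a citation and to concentrate on part (ii). For part (i), the hypothesis that $P_{X_1}P_{X_2}\cdots P_{X_m}$ has a fixed point is exactly the existence of a cycle; since each projector onto a closed convex set is firmly nonexpansive, the iterates $(x^k)_{k\in\NN}$ converge to some fixed point $\bar y_1\in X_1$ (by the references cited just before the statement), and the intermediate projections recover the cycle $(\bar y_1,\ldots,\bar y_m)$ through the relations displayed in the corollary.

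For part (ii), I would first name the intermediate iterates of one cyclic sweep: writing $q_{m}^{k}\coloneqq P_{X_m}(x^{k})$ and $q_{j}^{k}\coloneqq P_{X_j}(q_{j+1}^{k})$ for $j=m-1,\ldots,1$, so that $q_1^k=x^{k+1}$ and, by convention, $q_{m+1}^k\coloneqq x^k$. Continuity of the projectors together with the cycle relations of part (i) gives $q_j^k\to\bar y_j$ for every $j$. The crucial observation is that projecting onto $X_{\bar\imath+1}$ and then onto $X_{\bar\imath}$ is a genuine MAP step for the pair $(X_{\bar\imath},X_{\bar\imath+1})$: indeed $q_{\bar\imath}^{k}=P_{X_{\bar\imath}}P_{X_{\bar\imath+1}}(q_{\bar\imath+2}^{k})$, its $X_{\bar\imath+1}$-shadow $q_{\bar\imath+1}^{k}=P_{X_{\bar\imath+1}}(q_{\bar\imath+2}^{k})$ converges to $\bar y_{\bar\imath+1}\in\bap_{X_{\bar\imath}}(X_{\bar\imath+1})$, and the unilateral BAP-EB \cref{eq:error-bound-cond-Cyclic} is assumed at $\bar y_{\bar\imath}\in\bap_{X_{\bar\imath+1}}(X_{\bar\imath})$. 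Thus $q_{\bar\imath}^{k}$ is the output of a MAP step whose target set carries the error bound, placing us in the favorable setting of \cref{thm:SingleStep-Convex,thm:intrisic-finite}.

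From here I would combine two facts. On one hand, inspecting the proof of \cref{thm:SingleStep-Convex} shows that its conclusion only requires the $Y$-shadow $P_{X_{\bar\imath+1}}(\cdot)$ of the input to lie within $r$ of $\bar y_{\bar\imath+1}$ (the hypothesis $z\in\BB_r(x^{*})$ enters solely through $\norm{P_{X_{\bar\imath+1}}(z)-P_{X_{\bar\imath+1}}(x^{*})}\le r$); applying this strengthened reading to the input $q_{\bar\imath+2}^{k}$, whose shadow is $q_{\bar\imath+1}^{k}\to\bar y_{\bar\imath+1}$, yields $q_{\bar\imath}^{k}\in\bap_{X_{\bar\imath+1}}(X_{\bar\imath})$ for all large $k$, and hence $P_{X_{\bar\imath}}P_{X_{\bar\imath+1}}(q_{\bar\imath}^{k})=q_{\bar\imath}^{k}$ by \cref{fact:bapsets}(iii). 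On the other hand, applying the intrinsic finite-termination result \cref{thm:intrisic-finite} to the sequence $z^{k}\coloneqq q_{\bar\imath+1}^{k}$, whose $X_{\bar\imath}$-shadow is exactly $q_{\bar\imath}^{k}\to\bar y_{\bar\imath}$, shows that $P_{X_{\bar\imath}}P_{X_{\bar\imath+1}}(q_{\bar\imath}^{k})$ is eventually constant, and by continuity this constant equals $P_{X_{\bar\imath}}P_{X_{\bar\imath+1}}(\bar y_{\bar\imath})=\bar y_{\bar\imath}$. Combining the two facts forces $q_{\bar\imath}^{k}=\bar y_{\bar\imath}$ for all large $k$.

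Finally I would propagate this equality through the remaining projections of the sweep: once $q_{\bar\imath}^{k}=\bar y_{\bar\imath}$, the cycle relations give $q_{\bar\imath-1}^{k}=P_{X_{\bar\imath-1}}(\bar y_{\bar\imath})=\bar y_{\bar\imath-1}$, and inductively $q_{j}^{k}=\bar y_j$ down to $x^{k+1}=q_1^{k}=\bar y_1$; since $\bar y_1$ is a fixed point of the cyclic operator, the sequence is constant thereafter, which is the asserted finite convergence. The case $\bar\imath=m$ is identical under the convention $\bar y_{m+1}=\bar y_1$, taking $x^{k}\in X_1$ as the shadow-generating sequence and $q_2^{k-1}$ as the MAP input. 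I expect the main obstacle to be exactly the gap between the two-set results and the multi-set sweep: the finite-termination lemmas naturally certify the \emph{derived} point $P_{X_{\bar\imath}}P_{X_{\bar\imath+1}}(\cdot)$ rather than the actual cyclic iterate $q_{\bar\imath}^{k}$, so the delicate steps are (a) securing the membership $q_{\bar\imath}^{k}\in\bap_{X_{\bar\imath+1}}(X_{\bar\imath})$ via the shadow-only strengthening of \cref{thm:SingleStep-Convex}, and (b) upgrading membership together with convergence to the exact equality $q_{\bar\imath}^{k}=\bar y_{\bar\imath}$ through the constancy supplied by \cref{thm:intrisic-finite}.
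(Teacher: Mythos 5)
Your part (i) and the first half of part (ii) are sound and essentially coincide with the paper's own argument: the paper likewise reduces everything to showing that the $(X_{\bar\imath+1},X_{\bar\imath})$-portion of one sweep lands in $\bap_{X_{\bar\imath+1}}(X_{\bar\imath})$ once the relevant shadow is close to $\bar y_{\bar\imath+1}$, and it does so by exactly the observation you call the ``shadow-only strengthening'' of \cref{thm:SingleStep-Convex} (phrased there through an auxiliary point $v\in \BB_{r}(\bar y_{1})$ and the supporting hyperplane $\supH_{X_{1}}(X_{2})$, but the estimate used is $\norm{z^{\diamond}+d-\bar y_{1}}\le\norm{z-\bar y_{2}}\le r$, i.e.\ only the $X_{2}$-shadow matters). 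So up to the membership $q_{\bar\imath}^{k}\in\bap_{X_{\bar\imath+1}}(X_{\bar\imath})$ for large $k$, your route and the paper's agree.

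The genuine gap is your step (b), the upgrade from membership to the equality $q_{\bar\imath}^{k}=\bar y_{\bar\imath}$. You extract it from the eventual-constancy clause of \cref{thm:intrisic-finite}, but that clause is not delivered by the paper's proof of that lemma --- the proof only invokes \cref{thm:SingleStep-Convex}, which certifies membership in $\bap_{Y}(X)$ and says nothing about the value being fixed --- and it is false whenever $\bap_{X_{\bar\imath+1}}(X_{\bar\imath})$ is not a singleton. Concretely, take $m=3$ in $\RR^{2}$ with $X_{1}=\{x_{2}\ge 1\}$, $X_{2}=\{x_{2}\le 0\}$, $X_{3}=\{x_{2}\ge 2+x_{1}^{2}\}$: all hypotheses of (ii) hold with $\bar\imath=1$ and $\omega=1$, yet starting from $(1,1)$ the iterates are $x^{k}=(a_{k},1)$ with $a_{k}$ strictly decreasing to $0$, so they lie in $\bap_{X_{2}}(X_{1})=\{x_{2}=1\}$ from the first step on but never equal $\bar y_{1}=(0,1)$, and $P_{X_{1}}P_{X_{2}}(x^{k})=x^{k}$ is never eventually constant. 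This shows that the constancy you lean on cannot be had, that the conclusion of part (ii) itself can fail, and that the paper's own closing sentence (``must be $\bar y_{1}$ because of the convergence of the sequence'') is the same unjustified leap. In short, you correctly isolated the delicate point, but the tool you chose to close it does not exist.
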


\begin{proof}

  The convergence  of the method of Cyclic projections, under the existence of a cycle, to a point $\bar y_1\in X_1$ that lies in   $\Fix P_{X_1}P_{X_2}\cdots P_{X_m}$  can be found in \cite{Gubin:1967}.  It is straightforward to see that $
  \bar{y}_{1}=P_{X_{1}} (\bar{y}_{2}), \; \ldots\;, \; \bar{y}_{m-1}=P_{X_{m-1}} (\bar{y}_{m}), \; \bar{y}_{m}=P_{X_{m}}( \bar{y}_{1})$. Thus, item (i) follows.

  Now, the proof of  item (ii) relies entirely on \Cref{thm:SingleStep-Convex}, except for  a few details we are going to check.  Assume, without loss of generality, that $\bar \imath = 1$.
  
  We will show  the existence of a radius $r>0$ such that for all $z \in  \BB_{r}(\bar y_{2})\cap X_{2}$ we have  $P_{X_{1}}(z) = \bar y_{1}$, which suffices to establish item (ii). This will indeed complete the proof, because $(x^k)_{k\in\NN}$ converging to $\bar y_1$ implies that the sequence $(P_{X_2}\cdots P_{X_m}(x^{k}))_{k\in\NN}$ converges to $\bar y_2$, minding the continuity of projections. Hence, there exists an index $\bar k\geq 0$ such that $P_{X_2}\cdots P_{X_m}(x^{\bar k - 1}) \in\BB_{r}(\bar y_2)$. Thus, \[x^{\bar  k} = P_{X_1}P_{X_2}\cdots P_{X_m}(x^{\bar k - 1}) = \bar y_1\] and then for all $k\geq \bar k$, we have  $x^k = \bar y_1 $ because  $ \bar y_1 $ is a fixed point of the Cyclic projection operator $P_{X_1}P_{X_2}\cdots P_{X_m}$. 

\Cref{thm:SingleStep-Convex} guarantees the existence of a radius $r>0$ such that for all $ v\in \BB_r$ $(\bar y_{1})$, $P_{X_1} P_{\supH_{X_{1}}(X_{2})}(v) \in  \bap_{\supH_{X_{1}}(X_{2})}(X_{1}) = \bap_{X_{2}}(X_{1}) $. Take an arbitrary but fixed $v \in \BB_r(\bar y_{1}) $ and define $z = P_{X_{2}}(v)$. By the nonexpansiveness of projections, $z\in \BB_r(\bar y_{2})$. Now when projecting this point $z$ onto $X_{1}$ we cross the optimal supporting hyperplane $\supH_{X_{1}}(X_{2})$ at $z^{\diamond}$. 

Note that $ z^{\diamond} + d$ belongs to $\BB_r(\bar y_{1})$, where $d = \bar y_{1} - \bar y_{2}$ is the displacement vector. Furthermore, $ P_{X_1} (z) = P_{X_1} (z^{\diamond}) = P_{X_1}  P_{\supH_{X_{1}}(X_{2})}( z^{\diamond} + d)\in \bap_{\supH_{X_{1}}(X_{2})}(X_{1}) = \bap_{X_{2}}(X_{1})$. Therefore, $P_{X_1} (z)\in \bap_{X_{2}}(X_{1})$ must be $\bar y_{1}$ because of the convergence of the sequence $(x_{k})_{k\in \NN}$ to $\bar y_{1}$.
\end{proof}

We have just investigated the obtainment of a cycle by Cyclic projections in a finite number of iterations. It is easy to see that our hypotheses are sufficient but not necessary. Indeed, if one takes three non-intersecting parallel lines in $\RR^2$, the method of Cyclic projections always captures a cycle in a finite number of iterations, although some hypotheses of \cref{thm:Finite-Cyclic} are not fulfilled, including the error bound condition \cref{eq:error-bound-cond-Cyclic}.
Note that this  error bound  holds, for instance, when  the normal cone regarding one of the targets sets, say $X_{i}$, at a respective cycle point $\bar y_{i}$ contains a neighborhood  of the previous cycle point $\bar y_{i + 1}\in X_{i +1}$. However, the result in \cref{thm:Finite-Cyclic} has to be taken with moderate enthusiasm, as even for a very simple problem with plenty of structure, Cyclic projections may not converge finitely when we have more than two sets. Consider, for instance, three segments as target sets forming an equilateral triangle in $\RR^2$ and fix an order for the sets. The associated cycle is not archived by cyclic projections in finite number of steps for every point near the cycle.

 We now move towards the investigation of finite convergence of the Cimmino method. The Cimmino iteration regarding the sets $X_1,X_2,\ldots, X_m$ computed at a point $z^k\in \RR^n$ is given by $C(z^{k})\coloneqq  \frac{1}{m}\left(P_{X_1}+ P_{X_2}+ \cdots + P_{X_m}\right)(z^k)$. We remark that this iteration does not depend on any index order, as it works in parallel. Moreover, it can be seen as a MAP step under Pierra’s product space reformulation~\cite{Pierra:1984}. Pierra’s approach considers the Cartesian product  $\XX\coloneqq X_1\times X_2\times \cdots\times  X_m $ and the diagonal subspace $\DDiag\coloneqq\{(z,\ldots,z)\in \RR^{mn}\mid z \in \RR^n\}$. Clearly, the intersection of $X_1,X_2,\ldots, X_m$ is related to the intersection of $\XX$ and $ \DDiag$, that is, $z\in \bigcap_{i=1}^m X_i$  if, and only if, the diagonal point $(z,\ldots,z)\in \DDiag$ lies in $\XX$.

It is well-known  that, for any $z^k\in \RR^n$, 
\[
P_{\DDiag}P_{\XX}(z^k,z^k,\ldots,z^k) = (C(z^k),C(z^k),\ldots,C(z^k));
\] see \cite[Theorem 1.1]{Pierra:1984}.
Hence,   Cimmino iterations can indeed always be regarded as MAP steps. This allows us to connect \Cref{thm:SingleStep-Convex} with the sought of  a Cimmino limit point in a finite number of iterations. Formally, we have the following result. 

\begin{corollary}[Finite termination of multi-set Cimmino (MAP in product space)]\label{thm:FiniteMAP-Product} Let $X_1,X_2,\ldots, X_m \subset \RR^n$ be closed convex sets with no point in common,  assume that at least one the sets is bounded and define  \(\DDiag\coloneqq\{(z,\ldots,z)\in \RR^{mn}\mid z \in \RR^n\}\text{ and } \XX\coloneqq X_1\times X_2\times \cdots\times  X_m \). For a given ${x}^0\in\RR^{n}$, set $\mathbf{x}^0\coloneqq({x}^0,\ldots,{x}^0)\in \DDiag$ and consider  the MAP sequence $(\mathbf{x}^{k})_{k\in\NN}$ such that $\mathbf{x}^{k+1}\coloneqq P_{\DDiag}P_{\XX}(\mathbf{x}^k)$.
\begin{listi}

\item Then, the MAP iterates  $\mathbf{x}^{k} = (x^k,\ldots,x^k)$  converge to a point  $\hat{\mathbf{x}} \coloneqq (\hat x,\ldots,\hat x) \in\bap_{\XX}(\DDiag)$, with $\hat x\in \RR^n$ being the limit point of the Cimmino sequence  $(x^k)_{k\in \NN}$ ruled by $x^{k+1} \coloneqq  C(x^{k}) = \frac{1}{m}\sum_{i=1}^{m}P_{X_i}(x^k)$. 

\item The Cimmino limit point $\hat x$ is a solution of the least squares problem of minimizing the sum of the squares of the distances to the target sets
$X_1,\ldots,X_m$. 


\item If, in addition, there exists  an error bound $\omega > 0$ and a radius $\delta > 0$ such that
 \[\label{eq:error-bound-cond-Prod}
 \omega \dist(x,\bap_{\XX}(\DDiag)) \leq \dist(x, \supH_{\XX}(\DDiag)),
 \]
for all $\mathbf{x}\in \BB_{\delta}(\hat{\mathbf{x}} )\cap \DDiag$, 
 where $\supH_{\XX}(\DDiag)$ is the optimal supporting hyperplane to $\DDiag$ regarding $\XX$ given in \Cref{def:optimalhyperplane},  then there exists an index $\bar k \geq 0$, such that $\mathbf{x}^{k}  = \hat{\mathbf{x}} $, for all $k\geq \bar k$.

\end{listi}

\end{corollary}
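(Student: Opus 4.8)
The plan is to transfer the entire statement to the two-set setting in $\RR^{mn}$ by treating the pair $(\DDiag,\XX)$ as the two target sets and reusing the MAP machinery already developed, with Pierra's identity serving as the bridge. First I would record that, by the product-space identity $P_{\DDiag}P_{\XX}(z,\ldots,z)=(C(z),\ldots,C(z))$ recalled just before the statement, the sequence $(\mathbf{x}^k)_{k\in\NN}$ started at the diagonal point $\mathbf{x}^0=(x^0,\ldots,x^0)$ never leaves $\DDiag$, and its common component is exactly the Cimmino iterate $x^{k+1}=C(x^k)$. Hence convergence of $(\mathbf{x}^k)_{k\in\NN}$ and of $(x^k)_{k\in\NN}$ are one and the same statement. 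Moreover, since $z\in\bigcap_i X_i$ iff $(z,\ldots,z)\in\XX\cap\DDiag$, emptiness of the multi-set intersection is equivalent to $\XX\cap\DDiag=\emptyset$.

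The one genuine piece of work is showing that $\dist(\DDiag,\XX)$ is attained, which is exactly where the boundedness hypothesis is used and which unlocks item (i) via \Cref{conv-MAP}(i). Because $\XX$ is a Cartesian product, projection onto it is coordinatewise, so $\dist((z,\ldots,z),\XX)^2=\sum_{i=1}^m\dist(z,X_i)^2$ for every $z$, whence $\dist(\DDiag,\XX)^2=\inf_z\sum_i\dist(z,X_i)^2$. The objective $z\mapsto\sum_i\dist(z,X_i)^2$ is convex and continuous; if $X_j$ is the bounded factor, say $X_j\subset\BB_M(0)$, then $\dist(z,X_j)\geq\norm{z}-M$, so the objective is coercive and attains a minimizer $\hat x$. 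Then $(\hat x,\ldots,\hat x)\in\DDiag$ together with $(P_{X_1}(\hat x),\ldots,P_{X_m}(\hat x))\in\XX$ realize the distance, so $\dist(\DDiag,\XX)$ is attained, and \Cref{conv-MAP}(i) gives $\mathbf{x}^k\to\hat{\mathbf{x}}\in\bap_{\XX}(\DDiag)$; staying on the diagonal forces $\hat{\mathbf{x}}=(\hat x,\ldots,\hat x)$ and $x^k\to\hat x$. Item (ii) then falls out of the same identity: $\hat{\mathbf{x}}\in\bap_{\XX}(\DDiag)$ says precisely that $\sum_i\dist(\hat x,X_i)^2\leq\sum_i\dist(z,X_i)^2$ for all $z$, i.e. $\hat x$ minimizes the sum of squared distances to $X_1,\ldots,X_m$.

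Finally, item (iii) is a direct application of \Cref{thm:FiniteMAP-Convex} to $X=\DDiag$ and $Y=\XX$ in $\RR^{mn}$: the hypotheses $\DDiag\cap\XX=\emptyset$ and attainable distance are already in hand from the previous paragraph, and the assumed inequality \eqref{eq:error-bound-cond-Prod} is verbatim the unilateral BAP-EB of \Cref{def:BAP-error-bound} for this pair at $\hat{\mathbf{x}}$ (which is simultaneously the error-bound point $x^*$ and the limit $\bar x$). Since $\mathbf{x}^k\to\hat{\mathbf{x}}$, eventually $\mathbf{x}^{\bar k}\in\BB_r(\hat{\mathbf{x}})$ with $r=\min\{\omega\dist(\DDiag,\XX),\tfrac{\delta}{2}\}$, and then \Cref{thm:FiniteMAP-Convex} yields $\mathbf{x}^{\bar k+1}=\hat{\mathbf{x}}$; because $\bap_{\XX}(\DDiag)=\Fix(P_{\DDiag}P_{\XX})$ by \Cref{fact:bapsets}(i), the iterates remain equal to $\hat{\mathbf{x}}$ for all subsequent indices. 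I expect the only real subtlety to be the coercivity/attainment argument — confirming that boundedness of a single factor already forces the least-squares objective to possess a minimizer — since everything else is a clean translation of the already-proven two-set results through Pierra's identity.
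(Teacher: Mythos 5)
Your proposal is correct, and its overall skeleton matches the paper's: reduce everything to the two-set pair $(\DDiag,\XX)$ in $\RR^{mn}$ via Pierra's identity, establish that $\dist(\DDiag,\XX)$ is attained, invoke \Cref{conv-MAP}(i) for items (i)--(ii), and apply the single-step/finite-convergence machinery for item (iii). The genuine difference is in how the auxiliary facts are justified. The paper outsources attainment, the fixed-point characterization $\bap_{\XX}(\DDiag)=\{(z,\ldots,z)\mid z\in\Fix C\}$, and the least-squares characterization entirely to Combettes (Propositions 4 and 7, Theorems 1 and 4 of that reference), whereas you prove attainment from scratch: the identity $\dist((z,\ldots,z),\XX)^2=\sum_i\dist(z,X_i)^2$ plus boundedness of one factor makes the least-squares objective coercive, hence minimized at some $\hat x$, which simultaneously delivers attainment, nonemptiness of $\bap_{\XX}(\DDiag)$, and item (ii) as a one-line consequence. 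This buys a self-contained argument at the cost of a few extra lines; the paper's route buys brevity at the cost of external dependencies. For item (iii) you go through \Cref{thm:FiniteMAP-Convex} while the paper applies \Cref{thm:SingleStep-Convex} directly, but these are the same argument (the theorem is a corollary of the lemma), and your closing observation that $\bap_{\XX}(\DDiag)=\Fix(P_{\DDiag}P_{\XX})$ pins the tail iterates to $\hat{\mathbf{x}}$ correctly. One cosmetic remark: you use $\hat x$ both for the coercive minimizer and for the Cimmino limit before establishing they coincide; the argument survives because the limit lies in $\bap_{\XX}(\DDiag)$ and hence is itself a least-squares minimizer, but distinct names would make the logic cleaner.
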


\begin{proof} Since there is  no common point to the target sets $X_1,\ldots,X_m$, it follows that $\DDiag\cap \XX = \emptyset$. 
We note that, by \cite[Proposition 4]{Combettes:1994}, 
$\bap_{\XX}(\DDiag) = \Fix P_{\DDiag}P_{\XX} = \{(z,\ldots,z)\in \RR^{mn}\mid z \in \Fix C\}$, where $\Fix C$ is the set of fixed points of the Cimmino operator $C$. Now, since one of the sets $X_i$ is bounded,  \cite[Proposition 7]{Combettes:1994} guarantees  that $\bap_{\XX}(\DDiag)\neq \emptyset$. In particular, the distance between $\DDiag$ and $\XX$ is attained. 
That said, items (i) and (ii) are completely covered by \cite[Theorems~1 and~4]{Combettes:1994}
and then item (iii) follows as a direct application of \cref{thm:SingleStep-Convex}, with $\DDiag$ playing the role of $X$ and $\XX$ playing the role of $Y$. 
\end{proof}

One has to humble their expectations as for the range of instances covered by \Cref{thm:FiniteMAP-Product}, since BAP-EB in the original space may not be transported to the product space. We present, in \Cref{ex:Cimmino-MAP-ProductSpace}, one problem where BAP-EB is transported to the product space and one where it is not.

\section{Examples and applications discussing our results}\label{sec:discussion}

This section illustrates our results upon insightful examples, one of which raises a conjecture. 

We start connecting Linear Programming with  \Cref{thm:SingleStepMAP-PolyHyp,thm:FiniteMAP-poly}. Second, we present an example showing that local linear regularity (subtransversality) combined with infeasibility is not sufficient for finite convergence of MAP.  \cref{ex:Cimmino-MAP-ProductSpace} contains an important remark on  BAP-EB in Pierra's product space.  We then  present  an example in which we raise a conjecture regarding the substitution of the Lipschitzian error bound of BAP-EB by a H\"older one. Finally, we look at an application of MAP for non-smooth convex minimization given by min-max problems. 

\begin{example}[MAP for LP]\label{ex:MAP-LP}
A curious geometrical interpretation of Linear Programming (LP) follows from \Cref{thm:SingleStepMAP-PolyHyp,thm:FiniteMAP-poly}. Consider the following linear program
\begin{equation}\label{LP}
\begin{aligned}
& {\text{minimize}}
& & \scal{c}{x} \\
& \text{subject to}
& & \Omega:=\{x\in \RR^n \mid Ax\leq b,\mbox{ } x\geq 0\} 
\end{aligned}
\end{equation} 
where $A\in \RR^{m\times n}$, $c\in \RR^n$ with $c\neq 0$ and $b\in \RR^m$ are given. Let us assume that \cref{LP} is solvable and denote its solution set by $\Omega^*$. These hypotheses guarantee the existence of a dual feasible point $\hat y\in \RR^m$, \emph{i.e.}, $A^\top \hat y\geq c$ and $\hat y\geq 0$.
Take any $\varepsilon>0$, some $\hat x \in \RR^m$ such that $\scal{c}{\hat x}=\scal{b}{\hat y}-\varepsilon $, and define $H:=\{x\in \RR^n \mid \scal{c}{\hat x-x} = 0\}$. Note that finding such $\hat x$ is a trivial task. Now, strong duality yields $\Omega\cap H=\emptyset$, so \Cref{thm:FiniteMAP-poly} applies. This means that for any starting point $x^0\in\RR^n$ the MAP sequence $x^{k+1}:=P_\Omega P_H(x^k)$ finds a best point $\bar x\in\Omega$ after a finite number of iterations. It is easy to see that the displacement vector $d=\bar x-P_H(\bar x)$ between $H$ and $\Omega$ is a positive multiple of the cost $c$. Therefore, $\bar x$ solves the LP \cref{LP}. We could indeed find $\bar x$ in a single MAP step by   pushing the hyperplane $H$ sufficiently away from $\Omega$ in the direction of $-c$.
A similar conclusion is derived in \cite{Nurminski:2016} for LP using a projection-based scheme, nevertheless  uniqueness of primal and dual solutions of \cref{LP} is required. In turn, our results hold regardless of solutions being unique.

\end{example}


 We now present an example showing that, in general, inconsistency combined with local linear regularity  (subtransversality)  from \cref{def:linear-regularity} alone does not imply finite convergence of MAP. The example features two disjoint reverse lines forming forty-five degrees. In this example, BAP-EB does not hold and thus, neither does intrinsic transversality.

\begin{example}[Linear regularity does not imply finite convergence of MAP]\label{ex:lackoferrorbound}
Let \[X\coloneqq\{(t,0,0)\in\RR^3 \mid t\in \RR\}\quad \mbox{and} \quad Y_\gamma\coloneqq \{(t,t,\gamma)\in\RR^3 \mid t\in\RR\},\] with $\gamma\in\RR$ fixed.  In this example, the unique best approximation pair consists of $(\bar x,\bar y)$,  where $\bar x=(0,0,0)$ and $\bar y=(0,0,\gamma)$, but unilateral BAP-EB from \Cref{def:BAP-error-bound} is not satisfied, for $\gamma\neq 0$. Indeed,  in this case, the optimal supporting hyperplane to $X$ regarding $Y_\gamma$ is given by $\supH_{Y}(X)=\{(x_1,x_2,x_3)\in \RR^3 \mid x_3=0\}$ and obviously, $X\cap Y_\gamma=\emptyset$. Here,
 $\dist((0,1,0), \bap_{Y_\gamma}(X))=1$, however $\dist((0,1,0), \supH_{Y}(X))=0$, which prohibits the error bound inequality \cref{eq:error-bound} to hold if $\gamma\neq 0$. 
Let us now investigate the behavior of MAP in this case.  Consider an arbitrary starting point $x^0=(a,b,c)\in \RR^3$ and the MAP sequence $x^{k+1}=P_XP_{Y_\gamma}(x^k)$. Straightforward manipulations give us \[x^k=\left(\frac{a}{2^{k}},0,0\right).\] This sequence converges linearly with rate $1/2$ to the unique best point to $X$, namely the origin $(0,0,0)$. Note that the sequence does not depend on the parameter $\gamma$, showing that, in this example, infeasibility does not play a role at all. Thus, infeasibility together with polyhedrality is not enough to guarantee finite convergence of MAP.  This example also shows that the hyperplane in both \cref{thm:SingleStepMAP-PolyHyp,thm:FiniteMAP-poly} cannot be replaced, in general, by an arbitrary polyhedron. This is not even the case when the set $H$ in the theorem and corollary is affine only. Furthermore, this example satisfies local linear regularity, and thus this condition is not sufficient for finite convergence of MAP. 

\end{example}

The next example shows that BAP-EB is fulfilled for more problems  than  intrinsic transversality.

\begin{example}[BAP-EB is more general than intrinsic transversality]\label{ex:BAP-EB-Intrinsic}
  Let us look at the sets $X\coloneqq \{(x_{1},x_{2})\in\RR^{2} \mid x_{1}^2 - x_{2} + 1 \leq 0 \text{ and } -x_{1} - x_{2} + 1 \leq 0 \}$ and  $Y\coloneqq \{(y_{1},y_{2})\in\RR^{2} \mid y_{1}  + y_{2} \leq 0  \text{ and } y_{1}^2 + y_{2} \leq 0 \}$.  They have empty intersection, only a single best approximation pair is associated to them, namely $(x^{*},y^{*})$ with $x^{*}= (1,0)$ and $y^{*}= (0,0)$, and clearly BAP-EB from \cref{def:BAP-error-bound-II}  is satisfied. Thus, MAP applied to $X$ and $Y$ converges finitely.  However, intrinsic transversality fails bilaterally. In order to see that, take the sequences  determined by $x^{k}\coloneqq \left(\frac{1}{k},\frac{1}{k^{2}}+1 \right) \in X\backslash\{x^{*}\}$ and $y^{k}\coloneqq  \left(\frac{-1}{k},\frac{-1}{k^{2}  } \right) \in Y\backslash\{y^{*}\}$ and note that the unit vector $\left(\frac{-2}{\sqrt{k^{2} + 4}},\frac{k}{\sqrt{k^{2} + 4}}\right)$ lies in $ -\Ncone_{X}(x^k) \cap \Ncone_{Y}(y^k)$. Hence, intrinsic transversality does not hold, even if $X$ and $Y$ switch roles in \cref{def:intr-trans}. 

\end{example}

The aim of the following example is to show that BAP-EB in the original space is not necessarily transported to the product space. 
\begin{example}[BAP-EB in the original space and in the product space]\label{ex:Cimmino-MAP-ProductSpace}
  Consider the polyhedron $X\coloneqq \{(x_{1},x_{2})\in\RR^{2} \mid \abs{x_{1}}- x_{2} + 2 \leq 0  \}$, the polyhedron $Y\coloneqq \{(y_{1},y_{2})\in\RR^{2} \mid \abs{y_{1}}  + y_{2} \leq 0  \}$ and the hyperplane $H\coloneqq \{ (x_{1},x_{2}) \in\RR^{2} \mid  x_{2} = 0 \}$. Note that  $\bap(X,Y) = \bap(X,H) = \{(x^{*},y^{*})\}$, where  $x^{*} = (0,2)$ and $y^{*} = (0,0)$. MAP applied to either $H$ and $X$, or $Y$ and $X$ has finite convergence wherever one starts in $\RR^{2}$. Cimmino, however, only converges finitely to ${(x^{*}+y^{*})}/{2} = (0,1)$ when applied to the pair $Y$ and $X$. The relation between MAP and Cimmino discussed in \Cref{sec:finite-extensions} thus guarantees that MAP  converges finitely in Pierra’s product space regarding $Y$ and $X$.  Actually, the BAP-EB between $Y$ and $X$ is transported to  the product space. Nevertheless, BAP-EB holds for $H$ and $X$ but Cimmino does not converge finitely to $(0,1)$ for this pair of sets, which means, again due to the relation between MAP and Cimmino, that MAP does not converge in a finite number of steps in the product space, preventing BAP-EB to hold there.  

\end{example}


It is well-known that for the consistent case,  Lipschitzian/H\"older error bound gives us in general  linear/sublinear convergence of MAP \cite{Bauschke:1996,Drusvyatskiy:2016,Borwein:2014a,Gubin:1967,Borwein:2017}. With that said,  \Cref{thm:FiniteMAP-Convex} consists of a striking jump in the rate of convergence of MAP, namely from linear to finite. In this regard, the final example of this section incites the formulation of an intriguing question: Do we get linear convergence of MAP under infeasibility together with a H\"older error bound?


\begin{example}[A ball and a hyperplane in $\RR^{2}$]\label{ex:ballHyper}
  Consider the closed  ball $X$ in $\RR^2$ with radius $1$ centered in $(0,1)$, that is,   $X\coloneqq \{(x_{1},x_{2})\in\RR^{2}\mid x_1^2 + (x_2- 1)^2\leq 1\}$ and the family of horizontal hyperplanes of the form $Y_{\varepsilon} \coloneqq \{(t,-\varepsilon)\in \RR^{2}\mid t\in\RR \}$, where $\varepsilon$ is a nonnegative parameter. Note that $Y_0$ coincides with the optimal supporting hyperplane $\supH_{Y_{\varepsilon}}(X)$, for all positive $\varepsilon$. We have that $X\cap Y_0$ consists of the origin. If $\varepsilon>0$, $X\cap Y_\varepsilon=\emptyset$ and there exists a unique best approximation pair, namely $\left\{(0,0),(0,-\varepsilon)\right\}$.
  \begin{figure*}[!hbtp]\centering
  \centering
  \includegraphics[width=.58\textwidth]{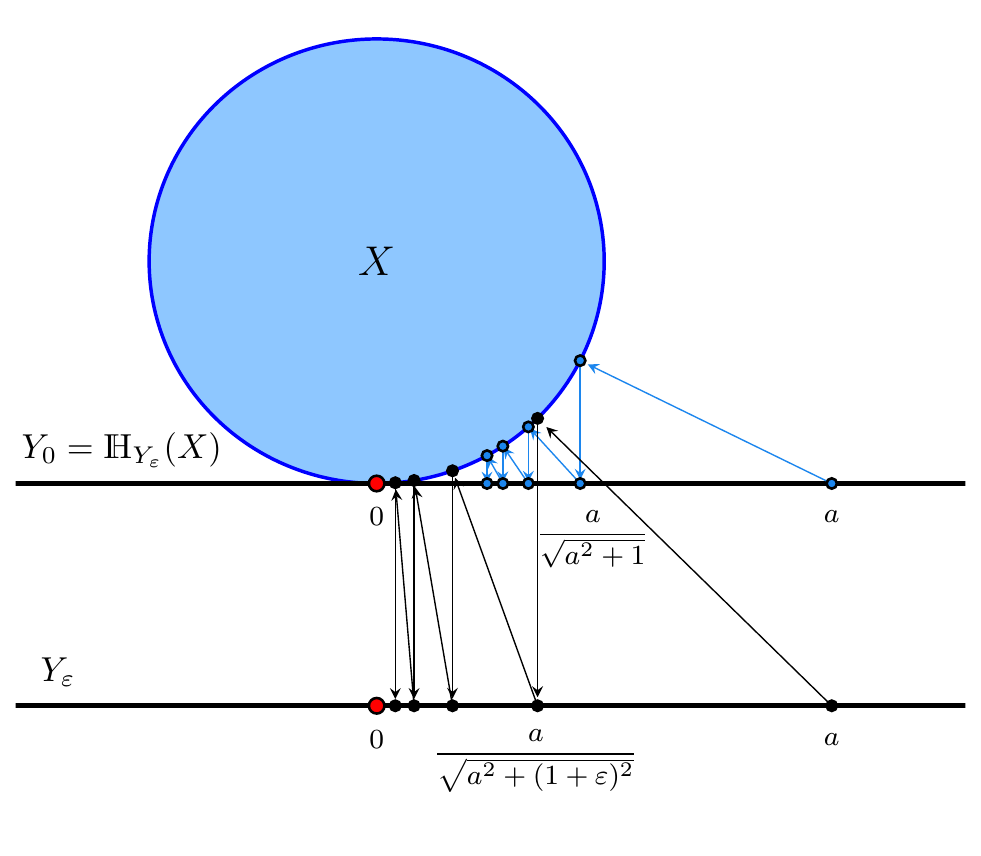}
\caption{Hölder error bound between  $X$ and $\supH_{Y_{\varepsilon}}(X)$.}
 \label{fig:MAPfigureExample}
\end{figure*}

We would like to draw our attention to a MAP iteration between $X$ and $Y_0$, and one concerning $X$ and $Y_\varepsilon$. These MAP iterations are given below and illustrated in \Cref{fig:MAPfigureExample}.
For a number  $a>0$, consider $(a,0)\in Y_0$ and  $(a,-\varepsilon)\in Y_\varepsilon$. Note that
\[
  P_{Y_0}P_X(a,0) = \left(\frac{a}{\sqrt{a^2+1}},0\right)\in Y_0\]
and  
  \[  P_{Y_\varepsilon}P_X(a,-\varepsilon) = \left(\frac{a}{\sqrt{a^2+(1+\varepsilon)^2}},-\varepsilon\right)\in Y_\varepsilon.
\]

Note also that 
\[\label{eq:limY0}
  \lim_{a\to0^+}  \frac{\norm{P_{Y_0}P_X(a,0) - (0,0)}}{\norm{(a,0) - (0,0)}} = \lim_{a\to0^+} \frac{1}{\sqrt{a^2+1}} = 1,
\]
and
\[\label{eq:limYeps}
  \lim_{a\to0^+}  \frac{\norm{P_{Y_\varepsilon}P_X(a,-\varepsilon) - (0,-\varepsilon)}}{\norm{(a,-\varepsilon) - (0,-\varepsilon)}} = \lim_{a\to0^+} \frac{1}{\sqrt{a^2+(1+\varepsilon)^2}} = \frac{1}{1+\varepsilon}.
\]
The limit being $1$ in \cref{eq:limY0} means that a MAP sequence between $X$ and $Y_0$, monitored on $Y_0$, converges to the origin sublinearly. On the other hand, in view of \cref{eq:limYeps}, the gap of size $\varepsilon$  leads to a linear convergence of a MAP sequence to the best point $(0,-\varepsilon)$  with asymptotic rate $\frac{1}{1+\varepsilon}$, where this MAP sequence regards $X$ and $Y_\varepsilon$ and belongs to $Y_\varepsilon$. For simplicity, we are looking at  MAP shadows on $Y_0$/$Y_\varepsilon$, but of course, due to the nonexpansiveness of  projections, the rates would be sublinear/linear for the correspondent shadows on $X$.

Apparently, this leap from sublinear to linear convergence occurs in view of the presence of a Hölder error bound condition between $X$ and $\supH_{Y_{\varepsilon}}(X)$.  In fact, we have the existence of   $\omega > 0$  such that  
\[\label{eq:error-bound-Holder}
\omega \dist(x,X \cap \supH_{Y_{\varepsilon}}(X))^{1+q} \leq \dist(x, \supH_{Y_{\varepsilon}}(X)),
\]
$x\in X$ in a neighborhood of $X\cap \supH_{Y_{\varepsilon}}(X) = \{(0,0)\}$, with $q=1$. 
We remark that the Hölder parameter $q=1$ can be effortlessly derived in this example by taking into account that   $X$ is entirely contained in the epigraph of the  quadratic function $g(t) = \frac{t^2}{2}$.

\end{example}

The previous example presented a particular instance where adding inconsistency led to a gain in the convergence rate of MAP from sublinear to linear. We understand this is not something one can expect without assuming some H\"older regularity. 
Probably, the improvement of MAP would be more limited if we considered MAP for a Cauchy bowl getting apart from a hyperplane. By Cauchy bowl \cite{Arefidamghani:2021} we mean something like the epigraph of the function $f:D\to \RR$ defined such that $f(0)=0$ and $f(x)=e^{{\norm{x}}^{-2}}$ elsewhere, {}where $D = \{x\in\RR^{n}\mid \norm{x} \leq 1/\sqrt{3}\}$. This function $f$ is known to be infinitely differentiable in the interior of its domain, but not analytic. Furthermore, its epigraph does not satisfy a H\"older error bound with respect to the hyperplane $H=\left\{(x,0)\in\RR^{n+1}\mid x\in\RR^n\right\}$.

We have just risen the conjecture of whether 
Hölder regularity, with $q=1$, provides linear convergence for MAP in the inconsistent case. This looks very plausible and if indeed correct, it can establish MAP as a practical and (perhaps) competitive  method for non-smooth optimization, at least for convex min-max problems featuring convex quadratic functions. Perhaps K{\L}-theory \cite{Attouch:2010,Li:2018a} may shed some light on the question. 

\begin{example}[MAP for convex min-max optimization]\label{ex:min-max}
Consider the following problem
\[\label{eq:minmax}
\min_{x\in \RR^n} g(x)\coloneqq \max \left\{ f_1(x),\ldots,f_m(x) \right\},
\]
where each $f_i:\RR^n\to \RR$ is a convex  function and assume that $g$ has a minimizer. Let $g^*$ be the optimal value of \cref{eq:minmax}, take $\beta < g^*$ and define the hyperplane $H_\beta\coloneqq\{(x,\beta)\in\RR^{n+1}\mid x\in\RR^n \}$. Thus, the epigraph of $g$, $\epi g\coloneqq\{(x,t)\in\RR^{n+1}\mid x\in\RR^n, g(x)\leq t  \}$, does not intersect $H_\beta$ and $\bap_{H_{\beta}}(\epi g)$ consists of the minimizers of $g$.  Moreover, due to \cref{conv-MAP}(i), for any starting point $(x^0,t_0)\in\RR^{n+1}$ the  MAP sequence defined by
\[\label{eq:seqMAPMinMax}
(x^{k+1},t_{k+1}) \coloneqq P_{\epi g}P_{H_\beta}((x^k,t_k))
\]
converges to a point $(x^*, g^*)$ such that $x^*$ is a solution of~\cref{eq:minmax}. 

Note that if all $f_i$ are affine, solving~\cref{eq:minmax} relates to linear programming, in the sense discussed in \cref{ex:MAP-LP}, and finite convergence of the MAP sequence  \cref{eq:seqMAPMinMax} is assured by~\cref{thm:FiniteMAP-poly}.

Now, \Cref{cor:FiniteMAP-ConvexHyperplane} opens the possibility for the MAP  sequence~\cref{eq:seqMAPMinMax} to achieve finite convergence in a more general setting. If, for instance,  the solution $x^*$ of \cref{eq:minmax} is unique and if the tangent cone of $\epi g$ at $(x^*,g^*)$ is pointed, the Lipschitzian BAP-EB \cref{eq:error-bound-condXH} holds and the MAP sequence \cref{eq:seqMAPMinMax} converges to $(x^*,g^*)$ after a finite number of steps.

 Of course that addressing a non-affine setting in practice relies on the calculation of projections onto epigraphs and a lower bound $\beta$ for $g$  in hand. Getting $\beta$ is potentially an easier task. For instance, if there is an index $\hat \imath$ for which $f_{\hat \imath}$ has a minimizer $\hat x$, we  have that any $\beta < f_{\hat \imath}(\hat x)$ provides a strict lower bound for $g$ as $f_{\hat \imath}(\hat x)\leq g^*$. 

 With that said, one can think of using MAP for quadratic min-max, that is, when all $f_i$'s are convex quadratic functions. In this case, finding $\beta$ is easy and projecting onto the epigraph of $g$ is somehow manageable \cite{Dai:2006}. As for the BAP-EB ~\cref{eq:error-bound-condXH}, it may or may not be satisfied. However, the Hölder condition \cref{eq:error-bound-Holder} holds with $q=1$, in view of the quadratic growth of $g$. Hence, if the conjecture raised in \cref{ex:ballHyper} is true for $q=1$, we would get either finite or linear convergence of the MAP sequence \cref{eq:seqMAPMinMax} for convex quadratic min-max problems. 

Furthermore, the conjecture being true would echo as well in min-max problems featuring strongly convex functions. The maximum of a finite number of strongly convex functions is strongly convex, has a unique minimizer and one can bound $g$ below by  a strictly convex quadratic function with minimum value $g^*$.  So,  H\"older regularity  holds with $q=1$ in the strongly convex setting of \cref{eq:minmax} and again, one would   
 either get finite or linear convergence of the MAP sequence \cref{eq:seqMAPMinMax}.

 The bonds between min-max problems and H\"older regularity together with the validity of the conjecture formulated in  \cref{ex:ballHyper} would be an asset in the field of non-smooth convex  optimization.

\end{example}

\section{Concluding remarks}\label{sec:concluding}

We have derived finite convergence of alternating projections for two non-intersecting closed convex sets satisfying a Lipschitzian error bound condition, which has been to proven be more general than the well known concept of intrinsic transversality. This result strengthens the theory on MAP, a widely acclaimed method in Mathematics. In addition to being interesting from a theoretical point of view, our main theorem may also have an impact on practical issues regarding projection-type algorithms in general, as inconsistency has been seen favorable for MAP. A question left open is to what extent MAP can improve when embedding inconsistency to a problem satisfying a Hölder error bound condition. As discussed within an example, a positive answer in this direction may turn MAP into an attractive method for minimizing the maximum of convex functions, a central problem in non-smooth optimization. Although our main results are for finite convergence of MAP, we have as well stated theorems on other projection/reflection-based methods for both two-set and multi-set inconsistent feasibility problems. In particular, the Cimmino method, the Douglas-Rachford method and Cyclic projections have been addressed. 


\section*{Acknowledgments} 
 The authors would like to thank the associate editor and the anonymous referees for their valuable suggestions, which significantly improved this manuscript. 

\bibliographystyle{siamplain}

\bibliography{refs}

\end{document}